\setlist{labelsep=.25in,leftmargin=*,labelindent=1cm,topsep=2pt,noitemsep}	
\setlist[enumerate]{label=(\roman*)}
\theoremstyle{plain}
\newtheorem{theorem}{Theorem}
\newtheorem{corollary}[theorem]{Corollary}
\newtheorem{proposition}[theorem]{Proposition}
\newtheorem{lemma}[theorem]{Lemma}
\theoremstyle{remark}
\newtheorem{remark}[theorem]{Remark}
\theoremstyle{definition}
\newtheorem{example}[theorem]{Example}
\newcommand{\Polya}{P\'{o}lya }
\title{Some developments of exchangeable measure-valued P\'{o}lya sequences}
\author[1]{Yoana R. Chorbadzhiyska\thanks{jchorbadzh@uni-sofia.bg; yoanarch@phys.uni-sofia.bg}}
\author[2,1]{Hristo Sariev\thanks{h.sariev@math.bas.bg; hsariev@uni-sofia.bg}}
\author[1,2]{Mladen Savov\thanks{msavov@fmi.uni-sofia.bg; mladensavov@math.bas.bg}}
\affil[1]{\normalsize Faculty of Mathematics and Informatics, Sofia University ``St. Kliment Ohridski'', 5 James Bourchier Blvd, Sofia 1164, Bulgaria\vspace{0.2cm}}
\affil[2]{\normalsize Institute of Mathematics and Informatics, Bulgarian Academy of Sciences, 8 Acad. Georgi Bonchev Str., Sofia 1113, Bulgaria}
\date{}
\begin{document}

\maketitle

\begin{abstract}
Measure-valued P\'{o}lya sequences (MVPS) are stochastic processes whose dynamics are governed by generalized P\'{o}lya urn schemes with infinitely many colors. Assuming a general reinforcement rule, exchangeable MVPSs can be viewed as extensions of Blackwell and MacQueen's P\'{o}lya sequence, which characterizes an exchangeable sequence whose directing random measure has a Dirichlet process prior distribution. Here, we show that the prior distribution of any exchangeable MVPS is a Dirichlet process mixture with respect to a latent parameter that is associated with the atoms of an emergent conditioning $\sigma$-algebra. As the mixing components have disjoint supports, the directing random measure can be interpreted as a random histogram with bins randomly located on these same atoms. Furthermore, we extend the basic exchangeable MVPS to include a null component in the reinforcement, which corresponds to the presence of a fixed component in the directing random measure. Finally, we examine the effects of relaxing exchangeability to conditional identity in distribution (c.i.d.) and find out that the two are equivalent for balanced MVPSs. The paper features a complementary study of some properties of probability kernels that underlies the analysis of exchangeable and c.i.d. MVPSs.
\end{abstract}
\noindent{\bf Keywords:} P\'{o}lya urns; predictive distributions; exchangeability; Bayesian nonparametrics; directing random measures; proper conditional distributions.

\noindent{\bf MSC2020 Classification:} 60G09; 60G25; 60G57; 62G99.

\section{Introduction}

The now classical \textit{\Polya sequence} lies at the heart of Bayesian nonparametric analysis, characterizing an exchangeable sequence of random variables with a \textit{Dirichlet process} (DP) prior distribution through its system of predictive distributions. More formally, a sequence $(X_n)_{n\geq1}$ of random variables, taking values in some space, say $\mathbb{X}=[0,1]$, is called a \Polya sequence (PS) if $\mathbb{P}(X_1\in\cdot)=\nu(\cdot)$ and, for each $n=1,2,\ldots$, the predictive distribution of $X_{n+1}$ given $X_1,\ldots,X_n$ is the probability measure
\begin{equation}\label{eq:intro:polya_sequence}
\mathbb{P}(X_{n+1}\in\cdot\mid X_1,\ldots,X_n)=\frac{\theta\nu(\cdot)+\sum_{i=1}^n\delta_{X_i}(\cdot)}{\theta+n},
\end{equation}
where $\theta>0$ is a positive constant, $\nu$ a probability measure on $\mathbb{X}$, and $\delta_x$ the unit mass at $x$; if $|\mathbb{X}|=k$, the model is also known as a $k$-color \Polya urn. It follows from Theorem 1 in \cite{blackwell1973ferguson} that $(X_n)_{n\geq1}$ is an exchangeable process whose \textit{directing random measure} has a DP prior distribution with parameters $(\theta,\nu)$. We recall that the directing random measure of an exchangeable sequence is the common weak limit of its empirical measure and predictive distributions, and refer to Section \ref{section:model:prelim} for more information.

Much of the subsequent work in the field of Bayesian nonparametrics builds on the exchangeable model with a DP prior, generalizing some of its many defining characteristics; see Section 4.4 and Figure 14.5 in \cite{ghosal2017}. For example, species sampling sequences were introduced by \cite{pitman1996} as an extension of the sampling procedure described by \eqref{eq:intro:polya_sequence}, whereas Gibbs-type processes \citep{lijoi2007,deblasi2015} represent a natural generalization of the random partition process generated by a PS, also known as the Chinese restaurant process. Moreover, the directing random measure with a DP prior distribution has motivated the study of the class of normalized random measures with independent increments \citep{regazzini2003}, DP mixture models \citep{lo1984}, and other important families of prior distributions; we refer to \cite{lijoi2010} for a comprehensive review. One other feature, highlighted by the predictive construction \eqref{eq:intro:polya_sequence}, is that the dynamics underlying the model can be interpreted as a sequence of draws from an urn that contains balls of infinitely many colors. In this framework, urn contents are described compactly by finite measures in the sense that, for any measurable set $B\subseteq\mathbb{X}$, the quantity $\theta\nu(B)$ records the initial \textit{mass} of balls whose colors lie in $B$. According to the urn scheme implied by \eqref{eq:intro:polya_sequence}, we pick the first ball from the normalized content distribution $\nu$ and, given that color $X_1$ is observed, reinforce the urn with another ball of the same color. Reinforcement here reduces to a summation of measures, so that we pick the next ball from the updated urn composition, $\theta\nu+\delta_{X_1}.$ Thus, after $n$ draws, the probability that the color of the $(n+1)$-st ball is in $B$ will be proportional to $\theta\nu(B)+\sum_{i=1}^{n}\delta_{X_i}(B)$.

One way of generalizing the above urn scheme is to consider an arbitrary reinforcement rule, which formally means replacing $\delta_x$ with a general finite measure $R_x$ on $\mathbb{X}$. The resulting class of \textit{measure-valued \Polya urn processes}, tracking urn contents, was first proposed by \cite{thacker2022} and \cite{mailler2017} as an extension of the generalized \Polya urn model to arbitrary color spaces. In this setting, the observation process $(X_n)_{n\geq1}$, also known as a \textit{measure-valued \Polya sequence} (MVPS), has predictive distributions given by
\begin{equation}\label{eq:intro:mvps}
\mathbb{P}(X_{n+1}\in\cdot\mid X_1,\ldots,X_n)=\frac{\theta\nu(\cdot)+\sum_{i=1}^nR_{X_i}(\cdot)}{\theta+\sum_{i=1}^nR_{X_i}(\mathbb{X})}.
\end{equation} 
While exchangeability is a feature of the model \eqref{eq:intro:polya_sequence}, MVPSs need not be exchangeable in general. Indeed, most studies of MVPSs prove, under ``irreducibility''-type assumptions on $R$, that the predictive distributions \eqref{eq:intro:mvps} have a deterministic weak limit; see, e.g., \cite{thacker2022,thacker2020,janson2021,janson2023,mailler2017,mailler2020}. By Lemma 8.2 in \cite{aldous1985exchangeability}, a stochastic process whose predictive distributions converge weakly is \textit{asymptotically exchangeable} with directing random measure the same predictive limit; thus, when the limit is deterministic, the process becomes asymptotically i.i.d. An example of an MVPS with a random predictive limit is the \textit{randomly reinforced \Polya sequence} (RRPS) by \cite{sariev2021,sariev2023,bassetti2010}, who consider a general ``diagonal'' reinforcement rule, $R_{X_i}=W_i\delta_{X_i}$, where they add a random number, $W_i$, of additional balls of the observed color. However, unless the $W_i$'s are constant (corresponding to the reinforcement of a PS), the resulting RRPS will not be exchangeable.

In this paper, we focus primarily on MVPSs that are exchangeable. Until recently, the only known examples of exchangeable MVPSs, apart from the PS, were the particular $k$-color urn models considered in \citet[p.\,1591]{hill1987} and \citet[Section 2]{dakkak2014}; thus, even the basic question of which $k$-color urns are exchangeable had remained open. Recent work by \cite{berti2023kernel,sariev2024,sariev2023sufficientness} has substantially clarified this issue, revealing several structural properties of the entire class. In particular, it is now understood that $(i)$ exchangeable MVPSs are necessarily \textit{balanced}, meaning that we always add the same total number of balls in the urn; $(ii)$ the reinforcement $R$ is a regular conditional distribution for $\nu$ given some sub-$\sigma$-algebra $\mathcal{G}$, that is, $R(\cdot)=\nu(\cdot\mid\mathcal{G})$; and $(iii)$ the directing random measure of any exchangeable MVPSs admits a stick-breaking representation analogous to that of the DP, where $\delta$ is now replaced by $R$. Although technical, property $(ii)$ is essential for all subsequent analysis and implies, for example, that all $k$-color exchangeable MVPSs have a particular block-diagonal reinforcement design (see Example \ref{results:hierarchical:example:k-color}). In fact, \cite{berti2023kernel} first developed the theory under the assumption that $R(\cdot)=\nu(\cdot\mid\mathcal{G})$, and later \cite{sariev2024} proved that this condition is ultimately necessary for exchangeability. These and other results from \cite{berti2023kernel,sariev2024,sariev2023sufficientness} are summarized in Section \ref{section:model:mvps}.

Our goal here is to provide additional insight into the structure of exchangeable MVPSs and to study some natural extensions of the basic model. We first show that exchangeable MVPSs are, at a more fundamental level, DP mixture models with respect to (w.r.t.) a latent parameter that is associated with the conditioning $\sigma$-algebra in $(ii)$. Because the mixing components have disjoint supports, the directing random measure of any exchangeable MVPS can be interpreted as a random histogram whose bins are located on the atoms of the same $\sigma$-algebra. As such, its prior distribution can be seen as a genuine nonparametric extension of the classical random histogram prior \cite[][Example 5.11]{ghosal2017} by randomizing the locations and the ``upper'' shape of the bins, assigning a Dirichlet process prior to the bin weights, and at the same time keeping a simple sampling scheme. On the other hand, in all previous studies, reinforcement is assumed to be strictly positive, $R_x(\mathbb{X})>0$, so in the urn analogy new balls are always added after each draw. Although exchangeability prevents balls from being removed from the urn (a fact that we prove in Section \ref{section:results:null}), it is still possible to have zero reinforcement at times, leaving the urn unchanged after observing certain colors. Here, we extend the results in $(ii)$ and $(iii)$ to include model specifications that explicitly allow $R_x(\mathbb{X})=0$ for all $x$ in some set $Z\subseteq\mathbb{X}$, and we show that this is equivalent to mixing the directing random measure of the exchangeable MVPS on $Z^c$ with the deterministic measure $\nu(\cdot\mid Z)$. Finally, we examine the effects of relaxing exchangeability to the weaker property of \textit{conditional identity in distribution}. A recent line of research in Bayesian nonparametrics (see Section \ref{section:results:beyond}) considers predictive constructions of conditionally identically distributed (c.i.d.) processes, with the aim of capturing certain types of asymmetries between observations or temporary disequilibrium in the dynamics of the system. We show that, when balanced, c.i.d. MVPSs are necessarily exchangeable, so that the structural constraints imposed by \eqref{eq:intro:mvps} rule out the non-stationary behavior otherwise allowed by conditional identity in distribution. Curiously, it is still possible to have unbalanced c.i.d.~MVPSs that are not exchangeable, but this requires a particular form of the reinforcement kernel $R$.

The rest of the paper is organized as follows. Section \ref{section:kernel} contains a preliminary study of relevant measure-theoretic structures, which includes a parametric representation of $\sigma$-algebras and a characterization of regular conditional distributions in terms of their averaging properties that may be of independent theoretical interest. These results are essential when studying the reinforcement kernels of MVPSs under the assumptions of exchangeability or, more generally, conditional identity in distribution. In Section \ref{section:model}, we define exchangeable MVPSs and state their known properties, along with new properties that can be inferred from \cite{berti2023kernel,sariev2024}. All results regarding different new developments of the basic exchangeable MVPS are contained in Section \ref{section:results}. Proofs are postponed to Section \ref{section:proofs}.

\subsection{Contributions}

The following points summarize the main contributions in the paper:\vspace{0.2cm}
\begin{itemize}
\item Section \ref{section:kernel:rcd} is a short study of some measure-theoretic properties of probability kernels (properness, stationarity, self-averaging), which appear in the analysis of exchangeable/c.i.d. MVPSs, with new results linking these properties to each other (Theorem \ref{result:prelim:kernels:rcd}, Remark \ref{result:prelim:kernels:rcd:remark}) and relating them to regular conditional distributions (Proposition \ref{result:prelim:kernels:total-rcd}, Corollary \ref{result:prelim:kernels:s+r-rcd}). While Proposition \ref{result:prelim:kernels:total-rcd} is known in the probability literature, Corollary \ref{result:prelim:kernels:s+r-rcd} is a consequence of Theorem \ref{result:prelim:kernels:rcd} and Proposition \ref{result:prelim:kernels:total-rcd} and, in fact, answers a question raised by \citet[][p.\,11]{berti2023kernel}. Moreover, Corollary \ref{result:prelim:kernels:s+r-rcd} is related to some classical results in functional analysis (see Remark \ref{result:prelim:kernels:s+r-rcd:remark}).\vspace{0.2cm}

\item Section \ref{section:model:mvps} is a survey on exchangeable MVPSs. Existing results from \cite{sariev2024} and \cite{berti2023kernel} are systematically presented in the more general framework of MVPSs with a potential null component (Theorems \ref{introTh_1} and \ref{introTh_2}). New results on the posterior distribution and discreteness of the directing random measure $\tilde{P}$, which are given in \cite{berti2023kernel} under the stronger assumption $R(\cdot)=\nu(\cdot\mid\mathcal{G})$, but extend immediately to the entire class of exchangeable MVPSs by invoking the characterization in \cite{sariev2024}, are stated in Proposition \ref{introProp_1} and Theorem \ref{introTh_3}. In addition, we discuss possible generalizations of the model to measure-valued species sampling sequences (Remark \ref{model:mvps:remark_sss}).\vspace{0.2cm}

\item In Section \ref{section:results:hierarchy}, we refine the results of Section \ref{section:model:mvps} by applying the theory from Section \ref{section:kernel}. In particular, the properties of the conditioning $\sigma$-algebra $\mathcal{G}$, which are given but not used in Section \ref{section:model:mvps}, allow us to find a suitable parameterization through which we derive a hierarchical decomposition of the directing random measure $\tilde{P}$ (Proposition \ref{atoms_general} and Theorem \ref{atoms_general:mixture}). As a consequence, we are able to interpret $\tilde{P}$ as a Dirichlet process mixture and, in particular, as a random histogram, which helps us connect exchangeable MVPSs to some standard Bayesian nonparametric constructions (Remark \ref{atoms_general:mixture:remark}). A series of examples illustrate these results. In addition, Theorem \ref{atoms_general:mixture} implicitly proves the converse statement in Theorem \ref{introTh_2}.\vspace{0.2cm}

\item The starting premise of Section \ref{section:results:null} is the generalization of the basic exchangeable MVPS model to the case where $R_x$ is a signed measure, meaning that we allow the removal of balls from the urn. We prove in Proposition \ref{results:null:non-negative} that the latter is impossible under tenability conditions, although it does not exclude the presence of a null component in the reinforcement (the set $Z$ from the introduction), i.e., colors that leave the urn unchanged. The rest of the section is devoted to the extension of the results in Sections \ref{section:model:mvps} and \ref{section:results:hierarchy} under the assumption $\nu(Z)>0$. The proof of the representation of $R$ under $\nu(Z)>0$ in Theorem \ref{representation} uses Theorem \ref{introTh_1} after restricting the process to $Z^c$, while the hierarchical decomposition results in Corollary \ref{atoms_null_case} and Theorem \ref{directing_rm} follow for the most part from Proposition \ref{atoms_general} and Theorem \ref{atoms_general:mixture}, respectively.\vspace{0.2cm}

\item In Section \ref{section:results:beyond}, we study another development of the model in Section \ref{section:model:mvps} by relaxing exchangeability to conditional identity in distribution. We prove in Proposition \ref{results:other:cid} that balanced c.i.d. MVPS are necessarily exchangeable. In the unbalanced case, we show that it is still possible to have c.i.d. MVPSs that are not exchangeable (Example \ref{example:other:cid}), but this requires a specific form of the reinforcement kernel $R$. In particular, we derive a representation for $R$ (Theorem \ref{results:other:cid-balanced}) in the case when $\mathbb{X}$ is finite, which is similar in spirit to Theorem \ref{introTh_1}, but its proof uses fundamentally different arguments.
\end{itemize}

\section{Measure-theoretic detour}\label{section:kernel}

Unless stated otherwise, all random quantities are defined on a common probability space $(\Omega,\mathcal{H},\mathbb{P})$, which we assume is rich enough to support any randomizing variable we need. From now on, $(\mathbb{X},\mathcal{X})$ is a standard Borel space, in which case $\mathcal{X}$ is countably generated (c.g.). For any sub-$\sigma$-algebra $\mathcal{G}\subseteq\mathcal{X},$  we will say that $\mathcal{G}$ is \textit{c.g. under $\nu$} if there exists $C\in\mathcal{G}$ such that $\nu(C)=1$ and $\mathcal{G}\cap C$ is c.g. We refer to \cite{kallenberg2021} for any unexplained measure-theoretic details.

\subsection{Atoms of $\sigma$-algebras}\label{section:model:atoms}

Let $\mathcal{G}\subseteq\mathcal{X}$ be a sub-$\sigma$-algebra. Then $\mathcal{G}$ can be characterized by the function that maps points in $\mathbb{X}$ to the atoms of $\mathcal{G}$. To that end, we define the \textit{$\mathcal{G}$-atom} at $x\in\mathbb{X}$ to be the set
\[[x]_\mathcal{G}:=\bigcap_{G\in\mathcal{G},\,x\in G}G,\]
which is the smallest set of points that are indistinguishable from $x$ by $\mathcal{G}$-measurable sets. Then $\Pi:=\{[x]_\mathcal{G}:x\in\mathbb{X}\}$ forms a partition of $\mathbb{X}$, and $G=\bigcup_{x\in G}[x]_\mathcal{G}$, for every $G\in\mathcal{G}$. In general, atoms need not be $\mathcal{G}$-measurable sets, but for a c.g. $\sigma$-algebra $\mathcal{G}=\sigma(G_1,G_2,\ldots)$, it holds $[x]_\mathcal{G}=\{y\in\mathbb{X}:\delta_x(G_n)=\delta_y(G_n),n\in\mathbb{N}\}\in\mathcal{G}$; thus, if $\mathcal{G}$ is c.g. under $\nu$, then $[x]_\mathcal{G}\in\mathcal{G}$ for $\nu$-almost every (a.e.) $x$.

Let us define the map $\pi:\mathbb{X}\rightarrow\Pi$ by
\[\pi(x):=[x]_\mathcal{G},\qquad\mbox{for }x\in\mathbb{X},\]
and
\[\mathcal{G}_\pi:=\{P\subseteq\Pi:\pi^{-1}(P)\in\mathcal{G}\}.\]
It is straightforward to check that $\mathcal{G}_\pi$ is a $\sigma$-algebra on $\Pi$, so by construction, $\pi$ is $\mathcal{G}\backslash\mathcal{G}_\pi$-measurable; thus, $\mathcal{G}\supseteq\sigma(\pi)\equiv\pi^{-1}(\mathcal{G}_\pi)$. On the other hand, for each $x\in\mathbb{X}$,
\[\pi^{-1}\bigl(\bigl\{[x]_\mathcal{G}\bigr\}\bigr)=\{y\in\mathbb{X}:[x]_\mathcal{G}=[y]_\mathcal{G}\}=[x]_\mathcal{G}\qquad\mbox{and}\qquad \pi\bigl([x]_\mathcal{G}\bigr)=\bigl\{[y]_\mathcal{G}:y\in[x]_\mathcal{G}\bigr\}=\bigl\{[x]_\mathcal{G}\bigr\}.\]
Let $G\in\mathcal{G}$. Then $G=\bigcup_{x\in G}[x]_\mathcal{G}=\bigcup_{x\in G}\pi^{-1}\bigl(\bigl\{[x]_\mathcal{G}\bigr\}\bigr)=\pi^{-1}\bigl(\bigcup_{x\in G}\bigl\{[x]_\mathcal{G}\bigr\}\bigr)$. But $G\in\mathcal{G}$, so $\bigcup_{x\in G}\bigl\{[x]_\mathcal{G}\bigr\}\in\mathcal{G}_\pi$; therefore,
\[\mathcal{G}=\sigma(\pi).\]
Note that this result says nothing about the measurability of $\mathcal{G}$-atoms.

Regarding $\mathcal{G}_\pi$, since $\pi(G)=\pi\bigl(\bigcup_{x\in G}[x]_\mathcal{G}\bigr)=\bigcup_{x\in G}\pi\bigl([x]_\mathcal{G}\bigr)=\bigcup_{x\in G}\bigl\{[x]_\mathcal{G}\bigr\}$, we have \begin{equation}\label{model:atoms:eq1}
\pi^{-1}(\pi(G))=G,\qquad\mbox{for all }G\in\mathcal{G};
\end{equation}
thus, $\pi(\mathcal{G})\subseteq\mathcal{G}_\pi$. Moreover, from standard results, $\pi(\mathcal{G})$ is closed w.r.t. countable unions. Let $G\in\mathcal{G}$. Since $\Pi$ forms a partition of $\mathbb{X}$, we have $(\pi(G))^c=\{[x]_\mathcal{G}:x\in G\}^c=\{[x]_\mathcal{G}:x\in G^c\}=\pi(G^c)\in \pi(\mathcal{G})$; therefore, $\pi(\mathcal{G})$ is a $\sigma$-algebra on $\Pi$. Let $P\in\mathcal{G}_\pi$. Then $\pi^{-1}(P)\in\mathcal{G}$, so $P=\{[x]_\mathcal{G}:x\in\pi^{-1}(P)\}=\pi(\pi^{-1}(P))\in\pi(\mathcal{G})$, from which we conclude
\[\mathcal{G}_\pi=\pi(\mathcal{G}).\]
Now, on the measurable space $(\Pi,\pi(\mathcal{G}))$, we introduce the image probability measure
\[\nu_\pi=\nu\circ\pi^{-1}.\]

\subsection{Properties of probability kernels}\label{section:kernel:rcd}

A \emph{transition kernel} $R$ on $\mathbb{X}$ is a function $R:\mathbb{X}\times\mathcal{X}\rightarrow\mathbb{R}_+$ such that $(i)$ the map $x\mapsto R(x,A)\equiv R_x(A)$ is $\mathcal{X}$-measurable, for all $A\in\mathcal{X}$; and $(ii)$ $R_x$ is a measure on $\mathbb{X}$, for all $x\in\mathbb{X}$. Moreover, a transition kernel $R$ is said to be \textit{finite} if $R_x(\mathbb{X})<\infty$ for all $x\in\mathbb{X}$, and is called a \textit{probability kernel} if $R_x(\mathbb{X})=1$ for all $x\in\mathbb{X}$. A \textit{random probability measure} is a probability kernel $\tilde{P}:\Omega\times\mathcal{X}\rightarrow[0,1]$ from $\Omega$ to $\mathbb{X}$.

Let $\nu$ be a probability measure on $\mathbb{X}$, and $\mathcal{G}\subseteq\mathcal{X}$ a sub-$\sigma$-algebra. A probability kernel $R$ on $\mathbb{X}$ is said to be a \emph{regular version of the conditional distribution} (r.c.d.) for $\nu$ given $\mathcal{G}$, denoted by
\[R(\cdot)=\nu(\cdot\mid\mathcal{G}),\]
if the following two conditions are satisfied: \textit{a)} $x \mapsto R_x(A)$ is $\mathcal{G}$-measurable, for all $A\in\mathcal{X}$; and \textit{b)} $\int_BR_x(A)\nu(dx)=\nu(A\cap B)$, for all $A\in\mathcal{X}$ and $B\in\mathcal{G}$. The assumptions on $(\mathbb{X},\mathcal{X})$ guarantee that an r.c.d. for $\nu$ given $\mathcal{G}$ exists and is unique up to a $\nu$-null set.

We next introduce several properties that probability kernels typically possess. Let $R$ be a probability kernel on $\mathbb{X}$. We say that $R$ is \textit{almost everywhere proper} w.r.t.~some sub-$\sigma$-algebra $\mathcal{G}\subseteq\mathcal{X},$ provided there exists $F\in\mathcal{G}$ such that $\nu(F)=1$ and
\begin{equation}\label{condition:proper}\tag{$A$}
R_x(A)=\delta_x(A),\qquad\mbox{for all }A\in\mathcal{G}\mbox{ and }x\in F;
\end{equation}
\textit{stationary} w.r.t. $\nu$, provided
\begin{equation}\label{condition:stationarity}\tag{$B$}
\int_\mathbb{X}R_x(A)\nu(dx)=\nu(A),\qquad\mbox{for all }A\in\mathcal{X};
\end{equation}
and \textit{self-averaging}, provided
\begin{equation}\label{condition:reversible}\tag{$C$}
\int_\mathbb{X}R_y(A)R_x(dy)=R_x(A),\qquad\mbox{for all }A\in\mathcal{X}\mbox{ and }\nu\mbox{-a.e. }x.
\end{equation}
Note that when $\mathcal{G}$ is c.g. under $\nu$, \eqref{condition:proper} becomes equivalent to the more easily verifiable condition
\begin{equation}\label{condition:total}
R_x([x]_\mathcal{G})=1\qquad\mbox{for }\nu\mbox{-a.e. }x,
\end{equation}
where the essential set belongs to $\mathcal{G}$ (see the proof of Theorem \ref{result:prelim:kernels:rcd}).

Conditions \eqref{condition:proper}-\eqref{condition:reversible} appear separately or in combination in many different contexts, such as in the study of Markov processes \cite[Example 4]{berti2014}, disintegrations of probability measures \citep{berti1999}, ergodic theory \cite[Theorem 6.2]{einsielder2011}, statistical mechanics \cite[Section 2]{preston1976}, \cite[p.\,538]{sokal1981}, and some predictive constructions of probability laws \citep{berti2021}, see also Section \ref{section:results:beyond}. For r.c.d.s, \eqref{condition:stationarity} and \eqref{condition:reversible} follow from standard results on conditional expectations, while \eqref{condition:proper} is an important property of ``well-behaved'' r.c.d.s, with \cite{blackwell1975} calling it an ``intuitive desideratum'' for r.c.d.s; see \cite{berti2007,seidenfeld2001} for a discussion of improper r.c.d.s. In fact, by \cite[][Theorem 1]{blackwell1975} and \cite[][p.\,650]{berti2007},
\begin{equation}\label{condition:cg_under}
\nu(\cdot\mid\mathcal{G})\mbox{ satisfies }\eqref{condition:proper}\qquad\Longleftrightarrow\qquad\mathcal{G}\mbox{ is c.g. under }\nu,
\end{equation}
so that the properness of an r.c.d. is fundamentally linked to the properties of the conditioning $\sigma$-algebra.

We proceed by studying the relationship between \eqref{condition:proper}-\eqref{condition:reversible}, which we will use to characterize almost everywhere proper r.c.d.s in terms of their averaging properties. In Sections \ref{section:model} and \ref{section:results}, we will see that probability kernels associated with exchangeable MVPSs satisfy \eqref{condition:stationarity} and \eqref{condition:reversible}, and we will examine the consequences of this characterization. The next result shows that \eqref{condition:proper} decomposes into a measurability statement regarding $R_{|\mathcal{G}}$ together with the following particularization of \eqref{condition:stationarity} and \eqref{condition:reversible} on $\mathcal{G}$:
\begin{equation}\label{condition:stationarity1}\tag{$B'$}
\int_\mathbb{X}R_x(A)\nu(dx)=\nu(A),\qquad\mbox{for all }A\in\mathcal{G},
\end{equation} 
\begin{equation}\label{condition:reversible1}\tag{$C'$}
\int_\mathbb{X}R_y(A)R_x(dy)=R_x(A),\qquad\mbox{for all }A\in\mathcal{G}\mbox{ and }\nu\mbox{-a.e. }x,
\end{equation}
where $R_{x|\mathcal{G}}(A):=R_x(A)$, for $A\in\mathcal{G}$, is the restriction of $R_x$ on $(\mathbb{X},\mathcal{G})$, for all $x\in\mathbb{X}$. 

\begin{theorem}\label{result:prelim:kernels:rcd}
Let $R$ be a probability kernel on $\mathbb{X}$, and $\mathcal{G}\subseteq\mathcal{X}$ a c.g. under $\nu$ sub-$\sigma$-algebra. Then $R$ satisfies \eqref{condition:proper} if and only if it satisfies \eqref{condition:stationarity1}, \eqref{condition:reversible1}, and $\mathcal{G}=\sigma(R_{|\mathcal{G}})$ a.e.$[\nu]$.
\end{theorem}

\begin{remark}\label{result:prelim:kernels:rcd:remark}
Suppose in Theorem \ref{result:prelim:kernels:rcd} that
\[\mathcal{G}\equiv\sigma(R):=\sigma(\{x\mapsto R_x(A):A\in\mathcal{X}\}).\]
Since $\mathcal{X}$ is c.g., then $\sigma(R)$ is c.g. as well. Moreover, the $\sigma(R)$-atoms have the form
\begin{equation}\label{condition:atoms}
[x]_{\sigma(R)}=\{y\in\mathbb{X}:R_y=R_x\},\qquad\mbox{for }x\in\mathbb{X}.
\end{equation}
If $R$ satisfies \eqref{condition:proper} w.r.t. $\sigma(R)$, then it satisfies \eqref{condition:stationarity1}, \eqref{condition:reversible1}, and $\sigma(R)=\sigma(R_{|\sigma(R)})$ a.e.$[\nu]$. In this case, however, we are able to say something more. Since \eqref{condition:proper} implies through \eqref{condition:total} and \eqref{condition:atoms} that $R_y=R_x$, for $R_x$-a.e. $y$ and $\nu$-a.e. $x$, then
\[\int_\mathbb{X}R_y(A)R_x(dy)=R_x(A),\qquad\mbox{for all }A\in\mathcal{X}\mbox{ and }\nu\mbox{-a.e. }x;\]
thus, \eqref{condition:proper} w.r.t. $\sigma(R)$ implies the stronger condition \eqref{condition:reversible}. Conversely, assuming only \eqref{condition:reversible}, there exists $F\in\sigma(R)$ such that $\nu(F)=1$ and $R_x(A)=\int_\mathbb{X}R_y(A)R_x(dy)$, for all $A\in\mathcal{X}$ and $x\in F$. Since the map $x\mapsto\int_\mathbb{X}R_y(A)R_x(dy)$ is $\sigma(R_{|\sigma(R)})$-measurable, we get $\sigma(R)\cap F=\sigma(R_{|\sigma(R)})\cap F$. As a result, the measurability assumption in Theorem \ref{result:prelim:kernels:rcd} is satisfied under \eqref{condition:reversible}, and we obtain
\[\eqref{condition:proper}\mbox{ w.r.t. }\sigma(R)\qquad\Longleftrightarrow\qquad\eqref{condition:stationarity1}\mbox{ w.r.t. }\sigma(R)\quad+\quad\eqref{condition:reversible}.\]
\end{remark}~ 

Example \ref{example:connections:countable} shows that \eqref{condition:reversible} alone is not equivalent to \eqref{condition:proper} w.r.t. $\sigma(R)$.

\begin{example}\label{example:connections:countable}
Let $\mathbb{X}$ be countable, $\nu(\{x\})>0$ for all $x\in\mathbb{X}$, and $R$ be a probability kernel on $\mathbb{X}$ satisfying 
\begin{equation}\label{condition:reversible+}\tag{C+}
\int_BR_y(A)R_x(dy)=\int_AR_y(B)R_x(dy),\qquad\mbox{for all }A,B\in\mathcal{X}\mbox{ and }\nu\mbox{-a.e. }x,
\end{equation}
which is stronger than \eqref{condition:reversible}. In this setting, probability kernels are best described in terms of a matrix $[r_{xy}]_{x,y\in\mathbb{X}}$, where $r_{xy}=R_x(\{y\})$, in which case \eqref{condition:reversible+} becomes
\begin{equation}\label{eq:connections:SR->T}
r_{xy}r_{yz}=r_{xz}r_{zy}\qquad\mbox{for all }x,y,z\in\mathbb{X}.
\end{equation}
Fix $x\in\mathbb{X}$ such that $R_x([x]_{\sigma(R)})>0$. We can assume $r_{xx}>0$, without loss of generality; else, there exists at least one $x'\in[x]_{\sigma(R)}$ such that $r_{xx'}>0$, in which case $r_{x'x'}>0$ and $R_{x'}([x']_{\sigma(R)})=R_x([x]_{\sigma(R)})$. Let $y\notin[x]_{\sigma(R)}$ be such that $r_{xy}>0$. It follows from \eqref{eq:connections:SR->T} with $z=x$ that $r_{yx}=r_{xx}$. Take $z\in\mathbb{X}$. Then, $(i)$ if $z\in[x]_{\sigma(R)}$, we have $r_{xy}=r_{zy}>0$, so $r_{yz}=r_{xz}$ from \eqref{eq:connections:SR->T}; $(ii)$ if $z\notin[x]_{\sigma(R)}$ and $r_{xz}=0$, we have $r_{yz}=0$ from \eqref{eq:connections:SR->T}; $(iii)$ if $z\notin[x]_{\sigma(R)}$ and $r_{xz}>0$, we have $r_{yx}r_{xz}=r_{yz}r_{zx}$ from \eqref{eq:connections:SR->T}, and $r_{yx}=r_{xx}$ and, analogously, $r_{zx}=r_{xx}$ from before, so $r_{yz}=r_{xz}$. As a result $R_y=R_x$, which implies that $y\in[x]_{\sigma(R)}$, absurd. Therefore, $r_{xy}=0$ and, ultimately, $R_x([x]_{\sigma(R)})=1$.

For $x\in\mathbb{X}$ such that $R_x([x]_{\sigma(R)})=0$ and $y\notin[x]_{\sigma(R)}$ with $r_{xy}>0$, we cannot exclude the possibility of $r_{yy}\neq r_{xy}$ to get a contradiction. For example, if $\mathbb{X}=\{1,2,3,4\}$, then
\[R=\left[\begin{array}{cccc} 0 & \frac{1}{3} & \frac{1}{3} & \frac{1}{3} \\ 0 & 1 & 0 & 0 \\ 0 & 0 & \frac{1}{2} & \frac{1}{2} \\ 0 & 0 & \frac{1}{2} & \frac{1}{2} \end{array}\right]\]
is easily seen to satisfy \eqref{eq:connections:SR->T}. However, $R_1([1]_{\sigma(R)})=r_{11}=0$, which ultimately means that \eqref{condition:reversible+} is not a sufficient condition for \eqref{condition:total}, and thus not for \eqref{condition:proper} w.r.t. $\sigma(R)$.
\end{example}

We now give a simple counterexample to the problem posed in Remark \ref{result:prelim:kernels:rcd:remark} of whether \eqref{condition:stationarity} can be recovered from \eqref{condition:proper} w.r.t. $\sigma(R)$ alone.

\begin{example}\label{example:connections:three_colors}
Let $\mathbb{X}=\{1,2,3\}$. Suppose that we have a probability kernel $R$ on $\mathbb{X}$, given in matrix form by
\[R=\left[\begin{array}{ccc} 1 & 0 & 0 \\ 0 & \frac{1}{2} & \frac{1}{2} \\ 0 & \frac{1}{2} & \frac{1}{2} \end{array}\right].\]
Then the $\sigma(R)$-atoms are $\{1\}$ and $\{2,3\}$, and $R_x([x]_{\sigma(R)})=1$, for all $x\in\mathbb{X}$; thus, \eqref{condition:proper} holds w.r.t. $\sigma(R)$. On the other hand, it is easily checked that $R$ will not satisfy \eqref{condition:stationarity} unless $\nu(\{2\})=\nu(\{3\})$.
\end{example}

Regarding the assumption in Theorem \ref{result:prelim:kernels:rcd} that $\mathcal{G}=\sigma(R_{|\mathcal{G}})$ a.e.$[\nu]$, note that its role in the proof of Theorem \ref{result:prelim:kernels:rcd} is to resolve some measurability issues, and is ultimately needed for the fact that $R_x([x]_\mathcal{G})=R_x([x]_{\sigma(R_{|\mathcal{G}})})$, for $\nu$-a.e. $x$. However, it is more natural to assume that $x\mapsto R_x(B)$ is $\mathcal{G}$-measurable, for $B\in\mathcal{G}$, so we may ask whether the weaker condition $\sigma(R_{|\mathcal{G}})\subseteq\mathcal{G}$ a.e.$[\nu]$ or, equivalently, that $R_{|\mathcal{G}}$ is constant on $\nu$-a.e. atom of $\mathcal{G}$, is sufficient for \eqref{condition:proper} along with \eqref{condition:stationarity1} and \eqref{condition:reversible1}. The following example shows that this is not the case even under the stronger conditions $\sigma(R)\subseteq\mathcal{G}$, \eqref{condition:stationarity}, and \eqref{condition:reversible+}. 

\begin{example}\label{example:connections:improper}
Let $(\mathbb{X},\mathcal{X})=(\mathbb{R},\mathcal{B}(\mathbb{R}))$, and $\pi$ be a \textit{diffuse} probability measure on $\mathbb{X}$, i.e. $\pi(\{x\})=0$, for all $x\in\mathbb{X}$. Let us define
\[\nu:=\frac{1}{2}(\pi+\delta_0),\qquad\mathcal{G}:=\mathcal{X},\qquad\mbox{and}\qquad R_x:=\nu,\quad\mbox{for }x\in\mathbb{X}.\]
Then $\mathcal{G}$ is c.g., $\sigma(R)=\{\emptyset,\mathbb{X}\}\subseteq\mathcal{G}$, and $[x]_\mathcal{G}=\{x\}\subseteq\mathbb{X}=[x]_{\sigma(R)}$, for all $x\in\mathbb{X}$. Moreover, $R$ satisfies \eqref{condition:stationarity}, \eqref{condition:reversible+}, and $R_x([x]_{\sigma(R)})=1$, for all $x\in\mathbb{X}$. However, $R_0([0]_\mathcal{G})=1/2$, so $R$ does not satisfy \eqref{condition:proper} w.r.t. $\mathcal{G}$.
\end{example}

Let us now consider the problem of determining, in terms of the properties \eqref{condition:proper}-\eqref{condition:reversible}, when a probability kernel $R$ on $\mathbb{X}$ is also an r.c.d. for $\nu$ given $\mathcal{G}$. Recall from \eqref{condition:cg_under} that \eqref{condition:proper} is a necessary condition when $\mathcal{G}$ is c.g. under $\nu$. In fact, it is not difficult to show that \eqref{condition:proper} becomes sufficient if, in addition, $R$ is stationary and $\mathcal{G}$-measurable; see also \cite[][p.\,741]{blackwell1975}, \cite[][Lemma 1]{berti2013}, \cite[][Proposition 5.19]{einsielder2011}.

\begin{proposition}\label{result:prelim:kernels:total-rcd}
Let $R$ be a probability kernel on $\mathbb{X}$. Then $R$ satisfies \eqref{condition:proper}, \eqref{condition:stationarity}, and $\sigma(R)\subseteq\mathcal{G}$ if and only if $R(\cdot)=\nu(\cdot\mid\mathcal{G})$ and $\mathcal{G}$ is c.g. under $\nu$.
\end{proposition}

Together, Theorem \ref{result:prelim:kernels:rcd}, Remark \ref{result:prelim:kernels:rcd:remark}, and Proposition \ref{result:prelim:kernels:total-rcd} imply the less obvious fact (see also Remark \ref{result:prelim:kernels:s+r-rcd:remark}) that \eqref{condition:stationarity} and \eqref{condition:reversible} are sufficient conditions for $R$ to be an almost everywhere proper r.c.d., and thus answer a question raised by \citet[][p.\,11]{berti2023kernel}. Necessity follows from standard results on conditional expectations.

\begin{corollary}\label{result:prelim:kernels:s+r-rcd}
A probability kernel $R$ on $\mathbb{X}$ is an almost everywhere proper r.c.d. for $\nu$ if and only if $R$ satisfies \eqref{condition:stationarity} and \eqref{condition:reversible}
\end{corollary}

\begin{example}\label{result:prelim:kernels:s+r-rcd:example}
Let $\mathbb{X}$ be countable, and $\nu(\{x\})>0$ for all $x\in\mathbb{X}$. In this case, probability kernels can be represented as stochastic matrices, $R=[r_{xy}]_{x,y\in\mathbb{X}}$, so that conditions \eqref{condition:stationarity} and \eqref{condition:reversible} imply that $\nu R=\nu$ and $R^2=R$, respectively. Since $\nu$ is an stationary distribution with full support, every state is recurrent and $\mathbb{X}$ decomposes into a disjoint union of closed classes of communication, $\mathbb{X}=\bigcup_{\alpha\in\Gamma}C_\alpha$. Let $C$ be one such class, and $x\in C$. Since $R$ is idempotent, $(r_{xy})_{y\in\mathbb{X}}$ is stationary for $R$, which implies that $R_x(C^c)=0$. But $C$ is irreducible, so $(r_{xy})_{y\in C}$ as a stationary distribution on $C$ is unique. Therefore, under a suitable permutation of states, $R$ is block-diagonal and such that the rows within each block are identical. Finally, note that $\nu$ is a convex combination of the rows of $R$, so from the design of $R$, we have $R_x(\cdot)=\nu(\cdot\mid C)$, for all $x\in C$.
\end{example}

\begin{remark}\label{result:prelim:kernels:s+r-rcd:remark}
As Example \ref{result:prelim:kernels:s+r-rcd:example} suggests, some of the results in the present section can be understood through the language of operator theory. In particular, \eqref{condition:stationarity} and \eqref{condition:reversible} can be restated as $\nu R=\nu$ (i.e., $R$ is mass-preserving) and $R^2=R$ (i.e., $R$ is a projector), respectively, so that parts of Corollary \ref{result:prelim:kernels:s+r-rcd} follow from the fact that Markov projectors are conditional expectations (see, e.g., \cite{douglas1965}, \citet[][Section II.6.10]{blackadar2006}, \cite{dodds1990}), although properness is a purely measure-theoretic concept.
\end{remark}

Finally, we state a technical lemma, which will be used later.

\begin{lemma}\label{atoms_general:mixture:lemma}
Let $R(\cdot)=\nu(\cdot\mid\mathcal{G})$ for some c.g. under $\nu$ sub-$\sigma$-algebra $\mathcal{G}\subseteq\mathcal{X}.$ Then $\sigma(R)=\mathcal{G}$ a.e.$[\nu]$, and $x\mapsto R_x([x]_\mathcal{G})$ is $\mathcal{G}$-measurable a.e.$[\nu]$.
\end{lemma}

\section{The model}\label{section:model}

\subsection{Preliminaries}\label{section:model:prelim}

A sequence $(X_n)_{n\geq 1}$ of $\mathbb{X}$-valued random variables is (infinitely) \emph{exchangeable} if, for each $n=2,3,\ldots$ and all permutations $\sigma$ of $\{1,\ldots,n\}$,
\[(X_1,\ldots,X_n)\overset{d}{=}(X_{\sigma(1)},\ldots,X_{\sigma(n)}).\]
By de Finetti's representation theorem for (infinitely) exchangeable sequences \cite[Theorem 3.1]{aldous1985exchangeability}, there exists a random probability measure $\tilde{P}$ on $\mathbb{X}$, called  the \emph{directing random measure} of the process $(X_n)_{n\geq 1}$, such that, given $\tilde{P}$, the random variables $X_1,X_2,\ldots$ are conditionally independent and identically distributed (i.i.d.) with marginal distribution $\tilde{P}$,
\begin{eqnarray}
X_n\mid\tilde{P} &\overset{i.i.d.}{\sim} & \tilde{P} \nonumber\\ \tilde{P} & \sim & Q, \nonumber
\end{eqnarray}
so modeling usually consists of choosing a \textit{prior} distribution $Q$ for $\tilde{P}$. In addition, $\tilde{P}$ is the almost sure (a.s.) weak limit of the empirical measure,
\begin{equation}\label{eq:model:prelim:empir_conv:weak}
\frac{1}{n}\sum_{i=1}^n\delta_{X_i}\overset{w}{\longrightarrow}\tilde{P}\qquad\mbox{a.s.},
\end{equation}
as $n\rightarrow\infty$. On the other hand, for every $A\in\mathcal{X}$, we have
\begin{equation}\label{eq:model:prelim:predict_mart}
\mathbb{P}(X_{n+1}\in A|X_1,\ldots,X_n)=\mathbb{E}[\tilde{P}(A)|X_1,\ldots,X_n]\qquad\mbox{a.s.},
\end{equation}
implying that the predictive distributions form a Doob martingale w.r.t. the directing random measure and the natural filtration of $(X_n)_{n\geq 1}$. Then, as $n\rightarrow\infty$,
\begin{equation}\label{eq:model:prelim:predict_conv}
\mathbb{P}(X_{n+1}\in A|X_1,\ldots,X_n)\overset{a.s.}{\longrightarrow}\tilde{P}(A),
\end{equation}
and, by monotone class and separability arguments,
\begin{equation}\label{eq:model:prelim:predict_conv:weak}
\mathbb{P}(X_{n+1} \in \cdot \mid X_1, \ldots, X_{n})\overset{w}{\longrightarrow}\tilde{P}(\cdot)\qquad\text{a.s.}
\end{equation}
Thus, in principle, we should be able to recover the prior distribution from \eqref{eq:model:prelim:predict_conv:weak} when choosing to model the process directly through its predictive distributions. Moreover, one can perform posterior analysis on $\tilde{P}$ using as input $\mathbb{P}(X_{n+1}\in\cdot\mid X_1,\ldots,X_n)$; see \cite[][Section 2.4]{fortini2025}. Such a predictive approach to Bayesian nonparametric modeling is deeply rooted in the philosophical foundations of Bayesian analysis and has recently enjoyed renewed interest; see, e.g., \cite{berti2025,fong2023,fortini2025}. Central to this approach is the following result, which provides necessary and sufficient conditions for a system of predictive distributions to be consistent with exchangeability.

\begin{theorem}[Theorem 3.1 and Proposition 3.2 in \cite{fortini2000exchangeability}]\label{result:prelim:predictive_cond}
A sequence $(X_n)_{n\geq1}$ of $\mathbb{X}$-valued random variables is exchangeable if and only if, for each $n=0,1,2,\ldots$ and every $A,B\in\mathcal{X}$,
\begin{equation}\label{eq:model:prelim:predict:cond1}
\mathbb{P}(X_{n+1}\in A,X_{n+2}\in B|X_1,\ldots,X_n)=\mathbb{P}(X_{n+1}\in B,X_{n+2}\in A|X_1,\ldots,X_n)\quad\mbox{a.s.},
\end{equation}
and
\begin{equation}\label{eq:model:prelim:predict:cond2}
\mathbb{P}(X_{n+1}\in A|X_1=x_1,\ldots,X_n=x_n)=\mathbb{P}(X_{n+1}\in A|X_1=x_{\sigma(1)},\ldots,X_n=x_{\sigma(n)}),
\end{equation}
for all permutations $\sigma$ of $\{1,\ldots,n\}$ and a.e. $(x_1,\ldots,x_n)\in\mathbb{X}^n$ w.r.t. the marginal distribution of $(X_1,\ldots,X_n)$, where the case $n=0$ is meant as an unconditional statement.
\end{theorem}

\subsection{Exchangeable MVPS}\label{section:model:mvps}

A sequence $(X_n)_{n\geq 1}$ of $\mathbb{X}$-valued random variables on $(\Omega,\mathcal{H},\mathbb{P})$ is called a \textit{measure-valued \Polya sequence} with parameters $\theta$, $\nu$ and $R$, denoted MVPS($\theta,\nu,R$), if $X_1\sim\nu$ and, for each $n=1,2,\ldots$,
\begin{equation}\label{model:mvps:predictive}
\mathbb{P}(X_{n+1}\in \cdot\mid X_1, \ldots X_n) = \frac{\theta\nu(\cdot)+\sum_{i=1}^nR_{X_i}(\cdot)}{\theta+\sum_{i=1}^nR_{X_i}(\mathbb{X})},
\end{equation}
where $\theta>0$, $\nu$ is a probability measure on $\mathbb{X}$, and $R$ a finite transition kernel on $\mathbb{X}$, called the reinforcement kernel. By the Ionescu-Tulcea theorem, the law of the process $(X_n)_{n\geq 1}$ is completely determined by the sequence $(\mathbb{P}(X_{n+1}\in\cdot\mid X_1,\ldots,X_n))_{n\geq0}$. When $R_x(\mathbb{X})=m$ for some $m>0$ and $\nu$-a.e. $x$, the MVPS is said to be \emph{balanced}, which in the urn analogy means that we add the same total number of balls each time. Such an assumption greatly simplifies the calculations and, as Theorem \ref{introTh_1} shows, becomes necessary under exchangeability. 

It is further possible to consider MVPSs with random reinforcement and/or ones that allow balls to be removed from the urn. In the former case, \cite[][Theorem 1.3]{janson2019} and \cite[][p.\,6]{sariev2021} show that randomly reinforced MVPSs can be regarded as deterministic MVPSs on an extended space. On the other hand, if $R$ is a signed transition kernel, then certain conditions of tenability have to be introduced to ensure that no balls are removed that do not exist; see Section \ref{section:results:null}, where we prove that reinforcement must be non-negative under exchangeability. In the sequel, all MVPSs will have a non-negative deterministic reinforcement kernel $R$, unless otherwise specified. As a new development, in Section \ref{section:results:null}, we will consider MVPSs that explicitly have a null component in the reinforcement, which we model using
\[Z:=\{x\in\mathbb{X}:R_x(\mathbb{X})=0\}.\]
If $\nu(Z)=0$, we will say that the MVPS has a strictly positive reinforcement.

The \textit{\Polya sequence} (PS) of \cite{blackwell1973ferguson}, which is a cornerstone of Bayesian nonparametric analysis, is an example of an MVPS$(\theta,\nu,R)$ with reinforcement kernel $R_x=\delta_x$. By Theorem 1 in \cite{blackwell1973ferguson}, any PS is exchangeable and its directing random measure $\tilde{P}$ has a Dirichlet process (DP) prior distribution with parameters $(\theta,\nu)$, denoted $\tilde{P}\sim\mathrm{DP}(\theta,\nu)$. Equivalently (see, e.g., Theorem 4.12 in \cite{ghosal2017}), $\tilde{P}$ is an a.s. discrete random probability measure with so-called stick-breaking weights,
\begin{equation}\label{Sethuraman}
\tilde{P}(\cdot)\overset{w}{=}\sum_{j\geq1} V_j\delta_{U_j}(\cdot),
\end{equation}
where $V_1=W_1$ and $V_j=W_j\prod_{i=1}^{j-1}(1-W_i)$, for $j\geq2$, with $W_1,W_2,\ldots\overset{i.i.d.}{\sim}\textnormal{Beta}(1,\theta)$, and $U_1,U_2,\ldots\overset{i.i.d.}{\sim}\nu$ are independent of $(V_j)_{j\geq1}$.

We focus our study on the class of exchangeable MVPSs, viewed as an extension of the basic PS, though in Section \ref{section:results:beyond} we discuss model specifications that go beyond exchangeability. First, observe that the predictive distributions \eqref{model:mvps:predictive} are invariant under arbitrary permutations of the past observations, so that \eqref{eq:model:prelim:predict:cond2} is always true for MVPSs. Therefore, an MVPS will be exchangeable if and only if it satisfies the two-step-ahead invariance condition \eqref{eq:model:prelim:predict:cond1}. Then it is not hard to check that any MVPS, where $R(\cdot)=\nu(\cdot\mid\mathcal{G})$ is an r.c.d. for $\nu$ given some sub-$\sigma$-algebra $\mathcal{G}\subseteq\mathcal{X},$ is exchangeable; see Lemma 6 and Theorem 7 in \cite{berti2023kernel}. The converse result, though less obvious (see \citet[p.\,11,\,18]{berti2023kernel}), is also true. Indeed, \cite{sariev2024} prove that the reinforcement kernel of any exchangeable MVPS with strictly positive reinforcement is, up to a constant rescaling, an r.c.d. for $\nu$ given some sub-$\sigma$-algebra. To that end, they first show that exchangeable MVPSs are necessarily balanced. Then, under balancedness, they derive the identities $(i)$ $\int_AR_x(B)\nu(dx)=\int_BR_x(A)\nu(dx)$, and $(ii)$ $\int_AR_y(B)R_x(dy)=\int_BR_y(A)R_x(dy)$, for all $A,B\in\mathcal{X}$ and $\nu$-a.e. $x$, which are then used to construct the required $\sigma$-algebra. These results are collected in Theorem \ref{introTh_1}. Notice that $(i)$ and $(ii)$ are stronger than \eqref{condition:stationarity} and \eqref{condition:reversible}, respectively, so the last part of their argument can be simplified by exploiting the ideas behind Corollary \ref{result:prelim:kernels:s+r-rcd}.

\begin{theorem}[Proposition 3.1, Theorems 3.2 and 3.7, and Remark 4.1 in \cite{sariev2024}, Theorem 7 in \cite{berti2023kernel}]\label{introTh_1}
Let $(X_n)_{n\geq 1}$ be an exchangeable MVPS($\theta,\nu,R$).
\begin{enumerate}
\item If $(X_n)_{n\geq 1}$ is not i.i.d., there exists a constant $m>0$ such that
\begin{equation}\label{R=m}
R_x(\mathbb{X})=m\qquad\mbox{for }\nu\mbox{-a.e. }x\in Z^c.
\end{equation}
\item The sequence $(X_n)_{n\geq 1}$ is i.i.d. if and only if
\[\frac{R_x(\cdot)}{R_x(\mathbb{X})}=\nu(\cdot)\qquad\mbox{for }\nu\mbox{-a.e. }x\in Z^c.\]
\item  If $\nu(Z)=0$, then there exists a c.g. under $\nu$ sub-$\sigma$-algebra $\mathcal{G}\subseteq\mathcal{X}$ such that the normalized reinforcement kernel is an r.c.d. for $\nu$ given $\mathcal{G}$,
\begin{equation}\label{introduction_thrm}
\frac{R_x(\cdot)}{R_x(\mathbb{X})}=\nu(\cdot\mid\mathcal{G})(x)\qquad\mbox{for }\nu\mbox{-a.e. }x.
\end{equation}
\end{enumerate}
Conversely, if $(X_n)_{n\geq 1}$ is an MVPS($\theta,\nu,R$) such that $R(\cdot)=\nu(\cdot\mid\mathcal{G})$, for some (not necessarily c.g. under $\nu$) sub-$\sigma$-algebra $\mathcal{G}\subseteq\mathcal{X}$, then it is exchangeable.
\end{theorem}

According to Theorem \ref{introTh_1}, every exchangeable but not i.i.d. MVPS is balanced on $Z^c$. Since any i.i.d. MVPS$(\theta,\nu,R)$ is also i.i.d. MVPS$(\theta,\nu,\nu)$, we can reparametrize every exchangeable MVPS to satisfy $R_x(\mathbb{X})=1$ for all $x\in Z^c$; see also Remark 3.3 and Corollary 3.4 in \cite{sariev2024}. Moreover, from \eqref{model:mvps:predictive} we can easily check that such a parametrization is essentially unique, so we will call it the \textit{canonical representation} of the exchangeable MVPS and denote it by MVPS*($\theta,\nu,R$).

It is also worth noting that the fact that the conditioning $\sigma$-algebra $\mathcal{G}$ in Theorem \ref{introTh_1}\textit{(iii)} is c.g. under $\nu$ plays no role in \cite{sariev2024} and is simply an artifact of their proof. Nevertheless, this property becomes essential for the results in Section \ref{section:results} as it is the structure of $\mathcal{G}$ that allows us to derive the hierarchical decomposition of $\tilde{P}$ in Theorem \ref{atoms_general:mixture}; see also Remarks \ref{atoms_general:mixture:remark} and \ref{atoms_general:mixture:proof:remark}. 

A major consequence of Theorem \ref{introTh_1}\textit{(iii)} is that the results in \cite{berti2023kernel}, which are developed under the stronger assumption $R(\cdot)=\nu(\cdot\mid\mathcal{G})$ extend to the entire class of exchangeable MVPSs with $\nu(Z)=0$. Theorems \ref{introTh_2} and \ref{introTh_3}, and Proposition \ref{introProp_1} collect the most important facts about exchangeable MVPS with strictly positive reinforcement, providing in particular a complete description of the prior and posterior distributions, and showing that the convergence in \eqref{eq:model:prelim:predict_conv:weak} can be strengthened to convergence in total variation.

\begin{theorem}[Theorem 3.9 in \cite{sariev2024}]\label{introTh_2}
Let $(X_n)_{n\geq 1}$ be an exchangeable MVPS*($\theta,\nu,R$) with $\nu(Z)=0$ and directing random measure $\tilde{P}$. Then $\tilde{P}$ is equal in law to
\begin{equation}\label{eq:introTh_2:prior}
\tilde{P}(\cdot)\overset{w}{=}\sum_{j\geq1} V_jR_{U_j}(\cdot),
\end{equation}
where $(V_j)_{j\geq1}$ and $(U_j)_{j\geq1}$ are as in \eqref{Sethuraman}. Moreover, as $n\rightarrow\infty$,
\begin{equation}\label{eq:introTh_2:total_variation}
\sup_{A\in\mathcal{X}}\,\bigl|\mathbb{P}(X_{n+1}\in A|X_1,\ldots,X_n)-\tilde{P}(A)\bigr|\overset{a.s.}{\longrightarrow}0.
\end{equation}
\end{theorem}

Theorem \ref{atoms_general:mixture} in Section \ref{section:results:hierarchy} proves the converse statement: an exchangeable process directed by \eqref{eq:introTh_2:prior}, where $R(\cdot)=\nu(\cdot\mid\mathcal{G})$, for some c.g. under $\nu$ sub-$\sigma$-algebra $\mathcal{G}\subseteq\mathcal{X}$, has predictive structure \eqref{model:mvps:predictive}. On the other hand, \eqref{eq:introTh_2:prior} shows that, among the class of exchangeable MVPSs, the PS is the only one directed by a normalized random measure with independent increments. Moreover, $\tilde{P}$ in \eqref{eq:introTh_2:prior} may be viewed as a univariate example of a \textit{kernel stick-breaking Dirichlet process}, which was introduced by \cite{dunson2008} to model group data.

Unlike \eqref{eq:introTh_2:total_variation}, it is not necessarily true that the convergence of the empirical measure to $\tilde{P}$ in \eqref{eq:model:prelim:empir_conv:weak} can itself be extended to convergence in total variation. In fact (see Example 4 in \cite{berti2018}), for general exchangeable sequences, we will have
\[\sup_{A\in\mathcal{X}}\,\Bigl|\frac{1}{n}\sum_{i=1}^n\delta_{X_i}(A)-\tilde{P}(A)\Bigr|\overset{a.s.}{\longrightarrow}0,\qquad\mbox{as }n\rightarrow\infty,\]
if and only if $\tilde{P}$ is a.s. discrete, which in the case of an exchangeable MVPS is true if and only if $R_x$ is discrete for $\nu$-a.e. $x$. The latter fact is obtained from a combination of Theorem 10 in \cite{berti2023kernel} and Theorem \ref{introTh_1}, and is presented in the next proposition.

\begin{proposition}\label{introProp_1}
Let $(X_n)_{n\geq1}$ be an exchangeable MVPS*$(\theta,\nu,R)$ with $\nu(Z)=0$ and directing random measure $\tilde{P}$. Then $\tilde{P}$ is a.s. discrete/diffuse/absolutely continuous w.r.t. $\nu$ if and only if $R_x$ is discrete/diffuse/absolutely continuous w.r.t. $\nu$, for $\nu$-a.e. $x$.
\end{proposition}

Proposition \ref{introProp_1} further suggests that, in contrast to the PS and species sampling sequences in general, exchangeable MVPSs can be used to model continuous data, depending on the particular choice of $R$; see also \citet[p.\,3-4]{sariev2023sufficientness}. Indeed, species sampling sequences deal with categorical data by design, whereas for MVPSs with diffuse $R_x$, for example, notions like random partition and observation frequencies lose meaning at the level of the observed data $(X_1,\ldots,X_n)$ (see also Remark \ref{model:mvps:remark_sss}). Therefore, MVPSs can potentially make more efficient use of continuous data by further taking into account where each observation falls within $\mathbb{X}$.

\begin{remark}[Measure-valued species sampling sequences]\label{model:mvps:remark_sss}
Recall that a (proper) species sampling sequence \citep[][Definition 12]{pitman1996} is an exchangeable process $(X_n)_{n\geq1}$ with directing random measure given by
\begin{equation}\label{model:mvps:remark_sss:eq1}
\tilde{P}(\cdot)=\sum_{j\geq1}J_j\delta_{Z_j}(\cdot),
\end{equation}
for some sequence $(J_j)_{j\geq1}$ of non-negative random variables such that $\sum_{j\geq1}J_j=1$ a.s., and some sequence $(Z_j)_{j\geq1}$ that is i.i.d.$(\nu)$ and independent of $(J_j)_{j\geq1}$, where $\nu$ is a diffuse probability measure on $\mathbb{X}$. Equivalently \citep{hansen2000}, $(X_n)_{n\geq1}$ is characterized by predictive distributions of the form
\begin{equation}\label{model:mvps:remark_sss:eq2}
\mathbb{P}(X_{n+1}\in\cdot\mid X_1,\ldots,X_n)=\sum_{j=1}^{K_n}p_{j,n}\delta_{X_j^*}(\cdot)+r_n\nu(\cdot),
\end{equation}
for some measurable functions $p_{j,n}=p_j(n,K_n,\textbf{N}_n)$ and $r_n=r(n,K_n,\textbf{N}_n)$, which depend on the partition induced by $(X_1,\ldots,X_n)$, where $K_n$ and $(X_1^*,\ldots,X_{K_n}^*)$ denote, respectively, the number and distinct values of $(X_1,\ldots,X_n)$ in order of appearance, and $\textbf{N}_n=(N_{1,n},\ldots,N_{K_n,n})$, where $N_{j,n}$ is the number of times $X_j^*$ appears in $(X_1,\ldots,X_n)$, for $j=1,\ldots,K_n$. It follows from \cite{bacallado2017} that $(i)$ if $p_{j,n}=p(n,N_{j,n})$ and $r_n=r(n)$, then $\tilde{P}$ is a DP; $(ii)$ if $p_{j,n}=p(n,N_{j,n})$ and $r_n=r(n,K_n)$, then $\tilde{P}$ is a Pitman-Yor process; and $(iii)$ if $p_{j,n}=p(n,K_n,N_{j,n})$ and $r_n=r(n,K_n)$, then $\tilde{P}$ is a Gibbs-type process (see also \citet[][Proposition 1]{deblasi2015}).

In view of Theorems \ref{introTh_1} and \ref{introTh_2}, it is natural to ask whether, w.r.t.~the quantities in \eqref{model:mvps:remark_sss:eq1} and \eqref{model:mvps:remark_sss:eq2}, the directing measure of any exchangeable process $(X_n)_{n\geq1}$ satisfying
\begin{equation}\label{model:mvps:remark_sss:eq3}
\mathbb{P}(X_{n+1}\in\cdot\mid X_1,\ldots,X_n)=\sum_{j=1}^{K_n}p_{j,n}R_{X_j^*}(\cdot)+r_n\nu(\cdot),
\end{equation}
where $R(\cdot)=\nu(\cdot\mid\mathcal{G})$, for some sub-$\sigma$-algebra $\mathcal{G}\subseteq\mathcal{X}$, is of the form
\begin{equation}\label{model:mvps:remark_sss:eq4}
\tilde{P}(\cdot)\overset{w}{=}\sum_{j\geq1}J_jR_{Z_j}(\cdot).
\end{equation}
In the special case of an MVPS with $R(\cdot)=\nu(\cdot\mid\mathcal{G})$ or, equivalently, $p_{j,n}=p(n,N_{j,n})$ and $r_n=r(n)$, \citet[][Theorem 8]{berti2023kernel} show that $(R_{X_n})_{n\geq1}$ is itself a PS. The representation \eqref{model:mvps:remark_sss:eq4} then follows from \citet[][Theorem 9]{berti2023kernel}. Their argument, however, does not apply directly to general predictive weights, yet the structure of $R$ suggests a possible extension. Since $R(\cdot)=\nu(\cdot\mid\sigma(R))$ and $\sigma(R)$ is c.g., \eqref{condition:cg_under} implies that  $R_x([x]_{\sigma(R)})=1$, for $\nu$-a.e. $x$. Moreover, by \eqref{condition:atoms}, $R_y=R_x$, for all $y\in[x]_{\sigma(R)}$. Thus, a draw from $R_{X_j^*}$ a.s.~``adds'' another copy of the kernel $R_{X_j^*}$ to the urn. Consequently, for models in which $p_{j,n}$ and $r_n$ depend only on the partition induced by $(R_{X_1},\ldots,R_{X_n})$, and in which \eqref{model:mvps:remark_sss:eq3} is a convex combination of $\nu$ and the distinct kernels $R_1^*,\ldots,R_{\tilde{K}_n}^*$, the proof of \citet[][Theorem 8]{berti2023kernel} can be readily adapted to show that $(R_{X_n})_{n\geq1}$ is a species sampling sequence with the same predictive weights. The corresponding analogue of Theorem 9 in \cite{berti2023kernel} then requires the a.s. convergence of
\[\lim_{n\rightarrow\infty}\sum_{j=1}^{\tilde{K}_n}p_{j,n}R_j^*(A)=\int_\mathbb{X}R_x(A)\tilde{P}(dx).\]

For instance, consider the ``measure-valued'' Pitman-Yor predictive rule
\[\mathbb{P}(X_{n+1}\in\cdot\mid X_1,\ldots,X_n)=\sum_{j=1}^{\tilde{K}_n}\frac{\tilde{N}_{j,n}-\alpha}{\theta+n}R_j^*(\cdot)+\frac{\theta+\alpha\tilde{K}_n}{\theta+n}\nu(\cdot),\]
where $0\leq\alpha<1$, $\theta>-\alpha$, and $\tilde{N}_{j,n}:=\#\{i\leq n:R_{X_i}=R_j^*\}$, for $j=1,\ldots,\tilde{K}_n$. Then the predictive weights satisfy the above requirements and, by \citet[][Theorem 8]{berti2023kernel}, $(R_{X_n})_{n\geq1}$ is directed by the Pitman-Yor process. Moreover,
\[\sum_{j=1}^{\tilde{K}_n}p_{j,n}R_j^*(A)=\frac{1}{\theta+n}\sum_{i=1}^nR_{X_i}(A)-\frac{\alpha}{\theta+n}\sum_{j=1}^{\tilde{K}_n}R_j^*(A)\approx\frac{1}{n}\sum_{i=1}^nR_{X_i}(A),\]
since $\tilde{K}_n/n\overset{a.s.}{\longrightarrow}0$, as $n\rightarrow\infty$. It now follows from the law of large numbers for exchangeable sequences and \citet[][Theorem 9]{berti2023kernel} that $(X_n)_{n\geq1}$ is directed by \eqref{model:mvps:remark_sss:eq4}, where $(J_j)_{j\geq1}$ are the usual Pitman-Yor stick-breaking weights. 
\end{remark}

Returning to exchangeable MVPSs, by combining Theorem 13 in \cite{berti2023kernel} and Theorem \ref{introTh_1}, we obtain the posterior distribution of the directing random measure of any exchangeable MVPS with strictly positive reinforcement, which enjoys a conjugacy property similar to that of the DP \citep[Theorem 4.6]{ghosal2017}.

\begin{theorem}\label{introTh_3}
Let $(X_n)_{n\geq1}$ be an exchangeable MVPS*$(\theta,\nu,R)$ with $\nu(Z)=0$ and directing random measure $\tilde{P}$. Then
\[\tilde{P}(\cdot)\mid X_1,\ldots,X_n\overset{w}{=}\sum_{j\geq1} V_j^*R_{U_j^*}(\cdot),\]
where $(V_j^*)_{j\geq1}$ and $(U_j^*)_{j\geq1}$ are as in \eqref{Sethuraman} w.r.t. the parameters $\bigl(\theta+n,\frac{\theta\nu+\sum_{i=1}^nR_{X_i}}{\theta+n}\bigr)$.
\end{theorem}

\section{Results}\label{section:results}

Section \ref{section:results} develops several extensions of the model introduced in the previous section. In Section \ref{section:results:hierarchy}, we derive a hierarchical decomposition for the directing random measure $\tilde{P}$ of any exchangeable MVPS with strictly positive reinforcement \eqref{eq:introTh_2:prior}. The result relies on the fact that the conditioning $\sigma$-algebra in the representation of the reinforcement $R$ in \eqref{introduction_thrm} is almost everywhere c.g. and uses the theory from Section \ref{section:kernel}. This decomposition provides further insight into the structure of $\tilde{P}$ and clarifies its connection to some standard Bayesian nonparametric constructions. Section \ref{section:results:null} studies a generalization of the model in Section \ref{section:model:mvps} allowing for arbitrary reinforcement kernels. Under certain tenability conditions, we show that exchangeability tolerates only the presence of a null component in $R$, and we extend the results in Sections \ref{section:model:mvps} and \ref{section:results:hierarchy} to cover this case. Finally, Section \ref{section:results:beyond} focuses on an extension of the model under conditional identity in distribution, which is a weaker condition than exchangeability. We show that the two conditions coincide for balanced MVPSs, whereas in the unbalanced case they may diverge.

\subsection{Hierarchical representation}\label{section:results:hierarchy}

The main purpose of the present section is to develop the results in Section \ref{section:model:mvps} by applying the theory from Section \ref{section:kernel} and making extensive use of the fact that conditioning sub-$\sigma$-algebra $\mathcal{G}$ is c.g. under $\nu$. In particular, we show that exchangeable processes directed by \eqref{eq:introTh_2:prior} with $R$ as in \eqref{introduction_thrm} have predictive structure \eqref{model:mvps:predictive}, by using a suitable parameterization of the $\sigma$-algebra $\mathcal{G}$. In fact, the same parameterization reveals that sampling from \eqref{eq:introTh_2:prior} is ultimately performed in two steps, and modeling essentially consists of choosing a partition of the space $\mathbb{X}$ and selecting a distribution over each set in the partition. The first proposition states that the observations of an exchangeable MVPS with strictly positive reinforcement form a PS on the atoms of $\mathcal{G}$.

\begin{proposition}\label{atoms_general}
Let $(X_n)_{n\geq 1}$ be an exchangeable MVPS*$(\theta,\nu,R)$ with strictly positive reinforcement. Take $\mathcal{G}$ to be the sub-$\sigma$-algebra in \eqref{introduction_thrm}, and define $\pi$ as in Section \ref{section:model:atoms} w.r.t. $\mathcal{G}$. Then $(\pi(X_n))_{n\geq 1}$ is a PS.
\end{proposition}

The next theorem extends the conclusions of Proposition \ref{atoms_general}, revealing the hierarchical structure behind the distributional results in Theorem \ref{introTh_2}. In particular, it shows that the directing random measure of an exchangeable MVPS with strictly positive reinforcement is determined on the atoms of the conditioning $\sigma$-algebra.

\begin{theorem}\label{atoms_general:mixture}
A sequence $(X_n)_{n\geq 1}$ of $\mathbb{X}$-valued random variables is an exchangeable MVPS with strictly positive reinforcement if and only if there exist $\theta>0$, a probability measure $\nu$ on $\mathbb{X}$, and a parameter $\pi$ taking values in some measurable space $(\Pi,\mathcal{P})$ such that $\mathcal{P}$ contains $\nu_\pi$-a.e. singleton of $\Pi$, $\sigma(\pi)$ is c.g. under $\nu$, and
\begin{eqnarray}\label{atoms_general:mixture:eq}
X_n \mid \tilde{p}_n,\tilde{Q} & \overset{ind.}{\sim} & \nu(\cdot\mid\pi=\tilde{p}_n) \nonumber\\
\tilde{p}_n\mid \tilde{Q} & \overset{i.i.d.}{\sim} & \tilde{Q} \\
\tilde{Q} & \sim & \textnormal{DP}(\theta,\nu_\pi) \nonumber
\end{eqnarray}
\end{theorem}

Similarly to Proposition \ref{atoms_general}, Theorem \ref{atoms_general:mixture} states that the directing random measure $\tilde{P}$ of any exchangeable MVPS $(X_n)_{n\geq1}$ with strictly positive reinforcement has a DP prior distribution at the level of the atoms of $\sigma(\pi)$. Within each $\sigma(\pi)$-atom, say $[x]_{\sigma(\pi)}$, $\tilde{P}$ is equal to the conditional distribution of $\nu$ given $[x]_{\sigma(\pi)}$, broadly speaking, and has full support on $[x]_{\sigma(\pi)}$, since $\sigma(\pi)$ is c.g. under $\nu$ (see Remark \ref{atoms_general:mixture:proof:remark}); thus, $X_n$ is sampled from $\nu$ on $[X_n]_{\sigma(\pi)}$, conditionally given $\pi(X_n)$. The assumption that $\sigma(\pi)$ is c.g. under $\nu$ should not be considered restrictive, as it holds, for example, when $\pi$ takes values in a standard Borel space, in which case also $\{p\}\in\mathcal{P}$ for all $p\in\Pi$.

\begin{remark}[Bayesian nonparametrics]\label{atoms_general:mixture:remark}
The hierarchical model in \eqref{atoms_general:mixture:eq} characterizes an exchangeable process directed by
\begin{equation}\label{atoms_general:mixture:remark:eq1}
\tilde{P}(\cdot)=\int_\Pi\nu(\cdot\mid\pi=p)\tilde{Q}(dp),
\end{equation}
so that $\tilde{P}$ has a DP \textit{mixture} prior distribution in the sense of \citep{lo1984}, with mixing components given by the conditional distributions of $\nu$ on the atoms $\{\pi=p\}$ of $\sigma(\pi)$. Since $\sigma(\pi)$ is c.g. under $\nu$, condition \eqref{condition:proper} and \eqref{condition:cg_under} imply
\[\nu(\pi=p|\pi=p)=1\qquad\mbox{for }\nu_\pi\mbox{-a.e. }p;\]
thus, $\{\nu(\cdot\mid\pi=p)\}_{p\in\Pi}$ are mutually singular. As a consequence, clustering is effectively support-determined: each observation belongs to the unique component whose support contains it. In fact, $\pi(X_n)\overset{a.s.}{=}\tilde p_n$ (see the proof of Theorem \ref{atoms_general:mixture}), so that the labels $\tilde{p}_1,\tilde{p}_2,\ldots$, tracking cluster membership, can be recovered from the sequence of observations through $\pi$. Moreover, at the level of the observed atoms, $\tilde{P}$ induces the usual Chinese restaurant partition structure, so that, for example, the number of observed distinct atoms grows on the order of $\log n$. When the conditional distributions $\nu(\cdot\mid\pi=p)$ are diffuse, however, these notions lose meaning at the level of the observed colors/dishes.

This behavior is different from that of standard DP mixtures, where overlapping kernels allow each observation to have non-negligible likelihood under multiple components. In that case, posterior cluster allocation can borrow information across nearby components, and the induced density can vary smoothly through their superposition. By contrast, the components in \eqref{atoms_general:mixture:remark:eq1} have disjoint supports, so cluster membership is fixed once we observe the process. Moreover, from \eqref{Sethuraman} and \eqref{atoms_general:mixture:remark:eq1}, we obtain
\[\tilde{P}(\cdot)\overset{w}{=}\sum_{j\geq1}V_j\nu(\cdot\mid\pi=p^*_j),\]
where $p_1^*,p^*_2,\ldots\overset{i.i.d.}{\sim}\nu_\pi$ are independent of $(V_j)_{j\geq1}$. Therefore, the resulting density is piece-wise, rather than a smooth mixture of overlapping kernels, and the model loses the cluster-allocation flexibility that makes standard DP mixtures effective for smooth density estimation. Nevertheless, this structure can be useful when the partition encoded by $\pi$ has an intrinsic meaning, for example when clusters correspond to distinct regimes, strata, or support regions.

Equivalently, $\tilde{P}$ can be interpreted as a random histogram (see, e.g., \citet[][Example 5.11]{ghosal2017}) whose bins are located on the atoms $\{\{\pi=p_j^*\}\}_{j\ge1}$, with probabilities given by the stick-breaking weights and shapes determined by the corresponding conditional distributions (see also Examples \ref{results:hierarchical:example:k-color} and \ref{results:hierarchical:example:histogram}). Compared with classical random histograms, the bins need not be fixed in advance or rectangular, so depending on the choice of $\nu$ and $\pi$, the model can accommodate irregular supports or problem-specific partitions. A possible extension, which we do not pursue here, is to place a prior on $\pi$ in order to randomize both the location and the shape of the bins.
\end{remark}

\begin{example}[$k$-color urns]\label{results:hierarchical:example:k-color}
When $|\mathbb{X}|=k$, MVPSs are known in the literature as \textit{generalized \Polya urn} models (GPU) \cite[p.\,5]{pemantle2007}, and $R$ is given in terms of a so-called reinforcement matrix. The classical $k$-color \Polya urn model itself corresponds to a GPU with a scalar diagonal reinforcement matrix and generates an exchangeable process with a $k$-dimensional Dirichlet distribution prior. In general, Example 3.11 in \cite{sariev2024} and Example 2 in \cite{sariev2023sufficientness} show that a GPU will be exchangeable if and only if its reinforcement matrix $R$ is block-diagonal and such that within each block $R$ is constant, equal to the conditional distribution for $\nu$ given that particular block; see also Example \ref{result:prelim:kernels:s+r-rcd:example}. In the context of Theorem \ref{atoms_general:mixture}, the latter means that if $(X_n)_{n\geq1}$ is an exchangeable GPU, then $\Pi=\{p_1,\ldots,p_m\}$, for some $1\leq m\leq k$, so that, letting $\pi(x):=p_j$ if and only if $x\in D_j$, for $j=1,\ldots,m$, we get
\[\nu(\cdot\mid\pi=p_j)=\nu(\cdot\mid D_j).\]
Moreover, $\bigl(\tilde{Q}(\{p_1\}),\ldots,\tilde{Q}(\{p_m\})\bigr)$ has a Dirichlet distribution with parameters $(\theta\nu_\pi(\{p_1\}),\ldots,\theta\nu_\pi(\{p_m\}))$, and $(X_n)_{n\geq1}$ has directing random measure
\[\tilde{P}(\cdot)=\sum_{j=1}^m\tilde{Q}(\{\pi_j\})\frac{\nu(\cdot\cap D_j)}{\nu(D_j)},\]
assuming, as usual, $\nu(D_j)>0$, for all $j=1,\ldots,m$.
\end{example}

\begin{example}[Dominated model]\label{results:hierarchical:example:histogram}
Let $(X_n)_{n\geq1}$ be an exchangeable MVPS*$(\theta,\nu,R)$ with strictly positive reinforcement such that $R_x$ is absolutely continuous w.r.t. $\nu$, for $\nu$-a.e. $x$. By Theorem 3.10 in \cite{sariev2024}, there exists a countable partition $D_1,D_2,\ldots\in\mathcal{X}$ such that
\[R_x(\cdot)=\sum_{k\geq1}\nu(\cdot\mid D_k)\cdot\mathbbm{1}_{D_k}(x)\qquad\mbox{for }\nu\mbox{-a.e. }x.\]
In particular, assuming that $\nu=\lambda$ is the Lebesgue measure on $\mathbb{X}=\mathbb{R}$, and $0<\lambda(D_k)<\infty$ for all $k\geq1$, we obtain the DP mixture
\begin{eqnarray}
X_n\mid\tilde{Q} &\overset{i.i.d.}{\sim} & \sum_{k\geq1}\tilde{Q}(D_k)\frac{\lambda(\cdot\cap D_k)}{\lambda(D_k)} \nonumber\\ \tilde{Q} & \sim & \mbox{DP}(\theta,\lambda), \nonumber
\end{eqnarray}
which corresponds to the usual random histogram model with DP-distributed weights that is commonly used in the estimation of cell probabilities \citep{lo1984}; see also Example 1 in \cite{sariev2023sufficientness}.
\end{example}

\begin{example}[Invariant Dirichlet process]\label{results:hierarchical:example:idp}
Let $\mathfrak{G}=\{g_1,\ldots,g_k\}$ be a finite group of measurable mappings on $\mathbb{X}$, $\theta>0$ a positive constant, and $\nu$ a \textit{$\mathfrak{G}$-invariant} probability measure on $\mathbb{X}$, i.e. $\nu\circ g^{-1}=\nu$ for all $g\in\mathfrak{G}$. Define by
\[\mathcal{G}:=\{A\in\mathcal{X}:A=g^{-1}(A)\mbox{ for all }g\in\mathfrak{G}\}\]
the $\sigma$-algebra of $\mathfrak{G}$-invariant subsets of $\mathbb{X}$. Then $[x]_\mathcal{G}=\{y\in\mathbb{X}:g(y)=x,g\in\mathfrak{G}\}$. It follows from \cite[][Theorem 1]{tiwari1988} that
\begin{equation}\label{results:hierarchical:example:idp:eq}
\tilde{P}(\cdot)\overset{w}{=}\sum_{j\geq1}V_j\Bigl(\frac{1}{k}\sum_{i=1}^k\delta_{g_i(U_j)}(\cdot)\Bigr)
\end{equation}
has a so-called \textit{invariant Dirichlet process} (IDP) prior distribution, where $(V_j)_{j\geq1}$ and $(U_j)_{j\geq1}$ are as in \eqref{Sethuraman}. IDPs have been introduced by \cite{dalal1979} as extensions of the basic DP to account for inherent symmetries in the data; see also Example 17 in \cite{berti2023kernel} and Section 4.6.1 in \cite{ghosal2017}. In fact, by Theorem 1 in \cite{dalal1979}, realizations of $\tilde{P}$ are a.s. $\mathfrak{G}$-invariant probability measures. On the other hand, omitting the details,
\[\nu(\cdot\mid\mathcal{G})(x)=\frac{1}{k}\sum_{i=1}^k\delta_{g_i(x)}(\cdot)\qquad\mbox{for }\nu\mbox{-a.e. }x,\]
so the exchangeable process with directing random measure \eqref{results:hierarchical:example:idp:eq} is an MVPS$(\theta,\nu,R)$ with reinforcement kernel $R(\cdot)=\nu(\cdot\mid\mathcal{G})$.
\end{example}

\begin{example}[Symmetrized Dirichlet process]
Let $\mathbb{X}=\mathbb{R}$, and $\nu$ be a symmetric probability measure on $\mathbb{X}$. Suppose that $(X_n)_{n\geq1}$ satisfies \eqref{atoms_general:mixture:eq} w.r.t. $\pi(x):=|x|$, for $x\in\mathbb{X}$. Then $\sigma(\pi)\equiv\{A\in\mathcal{X}:A=-A\}$ is c.g., where $-A=\{-x:x\in A\}$, and $[x]_{\sigma(\pi)}=\{x,-x\}$. It follows for every $A\in\mathcal{X}$ that
\[\nu(A|\sigma(\pi))(x)=\nu(-A|\sigma(\pi))(x)\qquad\mbox{for }\nu\mbox{-a.e. }x;\]
thus, if $\tilde{P}$ is the directing random measure of $(X_n)_{n\geq1}$, then \eqref{eq:introTh_2:prior} implies that realizations of $\tilde{P}$ are a.s. symmetric distributions on $\mathbb{X}$. Moreover,
\[\nu(\cdot\mid\sigma(\pi))(x)=\frac{1}{2}\bigl(\delta_x(\cdot)+\delta_{-x}(\cdot)\bigr)\qquad\mbox{for }\nu\mbox{-a.e. }x,\]
so the above model is a particular example of an IDP, also known as a \textit{symmetrized Dirichlet process} \citep{doss1984}; see also Example 3 in \cite{berti2023kernel} and Example 5 in \cite{sariev2023sufficientness}. Similarly, one can modify the DP to pick rotationally invariant or exchangeable measures on $\mathbb{X}=\mathbb{R}^k$, \cite[Examples 4.34 and 4.35]{ghosal2017}.
\end{example}

\begin{example}
Let $\mathbb{X}=\mathbb{R}^m$, for $m\geq2$, and $\|\cdot\|$ be the Euclidean norm on $\mathbb{X}$. Suppose that $(X_n)_{n\geq1}$ satisfies \eqref{atoms_general:mixture:eq} w.r.t. $\pi(x):=\|x\|$, for $x\in\mathbb{X}$. Then $[x]_{\sigma(\pi)}=\{y\in\mathbb{X}:\|y\|=\|x\|\}$, so that each $\nu(\cdot\mid\sigma(\pi))(x)$ is supported on its own spherical surface in $\mathbb{X}$ centered at zero. Example 16 in \cite{berti2023kernel} studies the particular model 
\[\nu(\cdot)=\int_0^\infty\mathcal{U}_t(\cdot)e^{-t}dt,\]
where $\mathcal{U}_t$ is the uniform distribution on $\{\pi=t\}$, with $\mathcal{U}_0=\delta_0$. In that case,
\[\nu(\cdot\mid\sigma(\pi))(x)=\mathcal{U}_{\|x\|}(\cdot)\qquad\mbox{for }\nu\mbox{-a.e. }x,\]
so that the mixing probability distributions in \eqref{atoms_general:mixture:eq} are uniform on the particular spherical surfaces. Moreover, $\nu([x]_{\sigma(\pi)})=0$ for all $x\in\mathbb{X}$, which implies that the reinforcement $\nu(\cdot\mid\sigma(\pi))(x)$ at each $x$ and the initial measure $\nu$ are mutually singular.
\end{example}

\subsection{Exchangeable MVPSs with null part}\label{section:results:null}

A natural extension of the urn model \eqref{model:mvps:predictive} is obtained by allowing the reinforcement $R$ be a finite signed kernel, i.e., $R_x$ is a finite signed measure on $\mathbb{X}$, for all $x\in\mathbb{X}$, thereby permitting the removal of balls from the urn. In order to rule out the removal of non-existent balls, we impose the following condition of \textit{tenability}:
\begin{equation}\label{model:mvps:tenable}
\theta\nu+\sum_{i=1}^nR_{X_i}\quad\mbox{is a.s. a non-negative measure, for all }n\in\mathbb{N}.
\end{equation}
The next result states that, under exchangeability and \eqref{model:mvps:tenable}, reinforcement must be non-negative.

\begin{proposition}\label{results:null:non-negative}
Let $(X_n)_{n\geq1}$ be an MVPS$(\theta,\nu,R)$ such that $R$ is a finite signed kernel satisfying \eqref{model:mvps:tenable}. If $(X_n)_{n\geq1}$ is exchangeable, then $R_x(B)\geq0$, for all $B\in\mathcal{X}$ and $\nu$-a.e. $x$.
\end{proposition}

Note that Proposition \ref{results:null:non-negative} does not exclude the possibility of $\nu(Z)>0$, where we recall that
\[Z:=\{x\in\mathbb{X}:R_x(\mathbb{X})=0\}.\]
The introduction of $Z$ potentially allows us to account for the presence of control variables or to model situations in which we deliberately want to exclude the effect of certain observations. The next theorem, which generalizes Theorem \ref{introTh_1}\textit{(iii)}, states that the reinforcement kernel $R$ of any exchangeable MVPS is necessarily a mixture of two components, one independent of $x$ and corresponding to $\nu$ restricted to $Z$, and the other emerging from the representation of $R$ in \eqref{introduction_thrm} when $(X_n)_{n\geq 1}$ is restricted to $Z^c$.

\begin{theorem}
\label{representation}
If $(X_n)_{n\geq1}$ is an exchangeable MVPS$(\theta,\nu,R)$ such that $0<\nu(Z)<1$, then there exists a c.g. under $\nu$ sub-$\sigma$-algebra $\mathcal{G}\subseteq\mathcal{X}$ such that $Z^c\in\mathcal{G}$ and
\begin{equation} \label{R_x}
\frac{R_x(\cdot)}{R_x(\mathbb{X})}=\nu(Z^c)\nu(\cdot\mid\mathcal{G})(x)+\nu(Z)\nu(\cdot\mid Z)\qquad\mbox{for }\nu\mbox{-a.e. }x\in Z^c.
\end{equation}
Conversely, if $(X_n)_{n\geq1}$ is a balanced MVPS on $Z^c$ with reinforcement kernel \eqref{R_x} for some (not necessarily c.g. under $\nu$) sub-$\sigma$-algebra $\mathcal{G}\subseteq\mathcal{X}$ such that $Z^c\in\mathcal{G}$, then it is exchangeable.
\end{theorem}

\begin{remark}\label{results:null:remark:Z^c}
Notice that, for the reinforcement kernel in \eqref{R_x}, the assumption $Z^c\in\mathcal{G}$ implies through \eqref{condition:proper} that
\[\nu(Z^c|\mathcal{G})(x)=\delta_x(Z^c)=1,\qquad\mbox{for }\nu\mbox{-a.e. }x\in Z^c.\]
Then $\mathbb{P}(X_{n+1}\in Z^c|X_1,\ldots,X_n)=\nu(Z^c)$ a.s. on $\{X_1\in Z^c,\ldots,X_n\in Z^c\}$, so
\[\mathbb{P}(X_1\in Z^c,\ldots,X_n\in Z^c)=\bigl(\nu(Z^c)\bigr)^n.\]
On the other hand, $R_x(\cdot\cap Z)=R_x(\mathbb{X})\,\nu(\cdot\cap Z)$ for $\nu$-a.e. $x\in Z^c$, so in the urn analogy, when a color $x$ in $Z^c$ is observed, the colors in $Z$ are reinforced proportional to the amount they were initially present in the urn, which is necessary to preserve exchangeability.
\end{remark}

\begin{example}[PS with a null component]
Let $(X_n)_{n\geq1}$ be an MVPS$(\theta,\nu,R)$, where
\[R_x(\cdot)=\biggl\{\begin{array}{cl}\delta_x(\cdot) & \mbox{for }x\in Z^c,\\ 0 & \mbox{for }x\in Z,\end{array}\]
for some $Z\in\mathcal{X}$ such that $0<\nu(Z)<1$, that is, starting from the classical PS, we introduce a null component by choosing to reinforce only the colors that lie in $Z^c$. It follows from Remark \ref{results:null:remark:Z^c} that the resulting process is not exchangeable; in fact, it is an example of a \textit{dominant \Polya sequence} \citep{sariev2023}.

Now, consider instead the reinforcement kernel
\[R_x(\cdot)=\biggl\{\begin{array}{cl}\nu(Z^c)\delta_x(\cdot)+\nu(Z)\nu(\cdot\mid Z) & \mbox{for }x\in Z^c,\\ 0 & \mbox{for }x\in Z.\end{array}\]
In that case, $R$ satisfies \eqref{R_x} w.r.t. $\mathcal{G}=\mathcal{X}$,
\[\mathbb{P}(X_{n+1}\in\cdot\mid X_1,\ldots,X_n)=\nu(Z)\nu(\cdot\mid Z)+\nu(Z^c)\frac{\theta\nu(\cdot\mid Z^c)+\sum_{i=1}^n\delta_{X_i}(\cdot)\mathbbm{1}_{Z^c}(X_i)}{\theta+\sum_{i=1}^n\mathbbm{1}_{Z^c}(X_i)},\]
and $(X_n)_{n\geq1}$ is exchangeable.

From the point of view of cluster growth, the relevant quantity of interest for both models is the number of distinct values observed in $Z^c$. Under standard diffuseness assumptions, this number grows on the order of $\log n$ in both models, as in the classical PS; see Proposition 3.4 in \cite{sariev2023} and the proof of Theorem \ref{representation}. The effect of the null component on the innovation mechanism, however, is different. In both models,
\[\mathbb{P}(X_{n+1}\mbox{ is new in }Z^c|X_1,\ldots,X_n)=\frac{\theta\nu(Z^c)}{\theta+\sum_{i=1}^n\mathbbm{1}_{Z^c}(X_i)}.\]
However, in the exchangeable case, we have $\mathbb{P}(X_{n+1}\in Z^c|X_1,\ldots,X_n)=\nu(Z^c)$, so $\sum_{i=1}^n\mathbbm{1}_{Z^c}(X_i)/n\rightarrow\nu(Z^c)$ a.s., as $n\rightarrow\infty$, and, by the conditional Borel-Cantelli lemma, the number of distinct colors in $Z^c$ behaves like $\theta\log(\nu(Z^c)n)$. By contrast, in the dominant case, $\sum_{i=1}^n\mathbbm{1}_{Z^c}(X_i)/n\rightarrow1$ a.s., as $n\rightarrow\infty$, so the number of distinct colors in $Z^c$ behaves like $\theta\nu(Z^c)\log n$. Thus, in the exchangeable model, the null component reduces the number of visits to $Z^c$ and slows the reinforcement effect, producing a downward shift in the number of innovations at finite sample sizes. In the dominant model, almost all observations eventually fall in $Z^c$, and the reinforcement-driven part takes over more rapidly. Nevertheless, the presence of a null component forces innovations to occur relatively less often in the long run.
\end{example}

From Theorem \ref{representation}, we obtain the following extension of Proposition \ref{atoms_general} for the case when $0<\nu(Z)<1$.

\begin{corollary}\label{atoms_null_case}
Let $(X_n)_{n\geq 1}$ be an exchangeable MVPS*$(\theta,\nu,R)$ such that $0<\nu(Z)<1$. Take $\mathcal{G}$ to be the sub-$\sigma$-algebra in \eqref{R_x}, and define $\pi$ as in Section \ref{section:model:atoms} w.r.t. $\mathcal{G}$. Then $(\pi(X_n))_{n\geq 1}$ is an exchangeable MVPS$(\theta,\nu_\pi,R_\pi)$, where
\begin{equation}\label{atoms_null_case:eq}
(R_\pi)_p(\cdot)=\begin{cases}\nu(Z^c)\delta_p(\cdot)+\nu(Z)\nu_\pi(\cdot\mid\pi(Z)) \hspace{15pt}&\text{if $p\in \pi(Z^c)$},\\ 0&\text{if $p\in \pi(Z)$}.\end{cases}
\end{equation}
\end{corollary}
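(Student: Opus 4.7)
The plan is to compute the conditional distribution of $\pi(X_{n+1})$ given $X_1,\ldots,X_n$ by pushing the predictive distribution \eqref{model:mvps:predictive} forward through $\pi$, recognize the result as the MVPS$(\theta,\nu_\pi,R_\pi)$ predictive evaluated at $\pi(X_1),\ldots,\pi(X_n)$, and then drop down to $\sigma(\pi(X_1),\ldots,\pi(X_n))$ by the tower property. Exchangeability of $(\pi(X_n))_{n\geq 1}$ is then inherited from that of $(X_n)_{n\geq 1}$, since $\pi$ is applied coordinatewise.

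Fix $P\in\mathcal{G}_\pi=\pi(\mathcal{G})$, so that $\pi^{-1}(P)\in\mathcal{G}\subseteq\mathcal{X}$, and evaluate \eqref{model:mvps:predictive} at $A=\pi^{-1}(P)$. I would simplify each term using Theorem \ref{representation}: the baseline term is $\nu(\pi^{-1}(P))=\nu_\pi(P)$ by definition of the image measure, and for the reinforcement I would use the canonical normalization $R_x(\mathbb{X})=1$ on $Z^c$ (and $R_x\equiv 0$ on $Z$) together with \eqref{R_x}. Here the fact that $\mathcal{G}$ is c.g.\ under $\nu$ lets me take $\nu(\cdot\mid\mathcal{G})$ proper, so that
\[
\nu(\pi^{-1}(P)\mid\mathcal{G})(x)=\mathbbm{1}_{\pi^{-1}(P)}(x)=\mathbbm{1}_P(\pi(x))\qquad\text{for }\nu\text{-a.e.\ }x,
\]
while $Z\in\mathcal{G}$ and \eqref{model:atoms:eq1} yield $\nu(\pi^{-1}(P)\mid Z)=\nu_\pi(P\mid\pi(Z))$. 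Combining these identifications gives $R_{X_i}(\pi^{-1}(P))=(R_\pi)_{\pi(X_i)}(P)$ and $R_{X_i}(\mathbb{X})=(R_\pi)_{\pi(X_i)}(\Pi)$ almost surely, with $R_\pi$ as in \eqref{atoms_null_case:eq}, on both $\{X_i\in Z^c\}$ and $\{X_i\in Z\}$. Substituting back recovers exactly the MVPS$(\theta,\nu_\pi,R_\pi)$ predictive recursion \eqref{model:mvps:predictive} on $(\Pi,\mathcal{G}_\pi)$; since this expression is $\sigma(\pi(X_1),\ldots,\pi(X_n))$-measurable, the tower property transfers the identity down, and together with $\pi(X_1)\sim\nu_\pi$ the Ionescu-Tulcea theorem then pins down the law of $(\pi(X_n))_{n\geq 1}$.

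The main obstacle is managing the $\nu$-null exceptional sets where \eqref{R_x} and the properness of $\nu(\cdot\mid\mathcal{G})$ fail; since each $X_i$ has marginal distribution $\nu$ (by exchangeability combined with $X_1\sim\nu$), the union of those sets still has probability zero, so the a.s.\ identities above hold simultaneously for $i=1,\ldots,n$ and the substitution into the predictive formula is legitimate.
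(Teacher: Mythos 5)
Your argument is correct and takes essentially the same route as the paper, which proves the corollary by the same push-forward-through-$\pi$ computation used for Proposition \ref{atoms_general}: it uses the a.e.\ properness of $\nu(\cdot\mid\mathcal{G})$ (guaranteed because the $\mathcal{G}$ in \eqref{R_x} is c.g.\ under $\nu$) to identify $R_{X_i}(\pi^{-1}(P))$ with $(R_\pi)_{\pi(X_i)}(P)$ on a set of full measure, and then passes to the coarser $\sigma$-algebra. Your tower-property phrasing is equivalent to the paper's verification via expectations of products of indicators of $\mathcal{G}$-sets, and your handling of the exceptional null sets via the common marginal $\nu$ matches the paper's Boole-inequality step.
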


Let us now consider the form of the directing random measure of any exchangeable MVPS having a null reinforcement component. The next theorem is a straightforward extension of the results in Theorems \ref{introTh_2} and \ref{atoms_general:mixture}.

\begin{theorem}\label{directing_rm}
Let $(X_n)_{n\geq 1}$ be an exchangeable MVPS*($\theta,\nu,R$) with $0<\nu(Z)<1$ and directing random measure $\tilde{P}$.
\begin{enumerate}
\item Then, as $n\rightarrow\infty$,
\[\sup_{A\in\mathcal{X}}\,\bigl|\mathbb{P}(X_{n+1}\in A|X_1,\ldots,X_n)-\tilde{P}(A)\bigr|\overset{a.s.}{\longrightarrow}0.\]
Moreover, $\tilde{P}$ has the form
\[\tilde{P}(\cdot)=\nu(Z^c)\tilde{P}(\cdot\mid Z^c)+\nu(Z)\nu(\cdot \mid Z)\qquad\mbox{a.s.},\]
where
\[\tilde{P}(\cdot \mid Z^c)\overset{w}{=}\sum_{j\geq1}V_jR_{U_j}(\cdot \mid Z^c),\]
with $(V_j)_{j\geq 1}$ and $(U_j)_{j\geq 1}$ as in \eqref{Sethuraman} w.r.t. the parameters $(\theta,\nu(\cdot\mid Z^c))$.\vspace{0.2cm}
\item There exists some parameter $\pi$ such that
\begin{eqnarray}
X_n \mid \tilde{p}_n,\xi_n,\tilde{Q} & \overset{ind.}{\sim} & \begin{cases} \nu(\cdot\mid \pi=\tilde{p}_n) & \mbox{if }\xi_n=1, \nonumber\\ \nu(\cdot\mid Z) & \mbox{if }\xi_n=0, \end{cases}\\
(\tilde{p}_n,\xi_n)\mid \tilde{Q} & \overset{i.i.d.}{\sim} & \tilde{Q}\times\textnormal{Ber}(\nu(Z^c)) \nonumber\\
\tilde{Q} & \sim & \textnormal{DP}\bigl(\theta,\nu_\pi\bigl(\cdot\mid\pi(Z^c)\bigr)\bigr) \nonumber
\end{eqnarray}
\end{enumerate}
\end{theorem}

By Theorem \ref{directing_rm}, the directing random measure $\tilde{P}$ of any exchangeable MVPS $(X_n)_{n\geq1}$ with $\nu(Z)>0$ is a mixture of two components with disjoint supports, a DP mixture component on $Z^c$ and the deterministic probability measure $\nu(\cdot\mid Z)$. Therefore, given $\tilde{P}$, we draw each $X_n$ by first flipping a coin with ``probability of success'' $\nu(Z^c)$ to decide whether to choose $X_n$ from $\tilde{P}(\cdot\mid Z^c)$ or, alternatively, from $\nu(\cdot\mid Z)$. With respect to the random histogram interpretation in Remark \ref{atoms_general:mixture:remark}, the introduction of a null part implies that the histogram will have a bin over $Z$ with fixed bin weight $\nu(Z)$.

\subsection{Conditionally identically distributed MVPSs}\label{section:results:beyond}

Here, unlike Sections \ref{section:results:hierarchy} and \ref{section:results:null}, we consider non-exchangeable MVPSs $(X_n)_{n\geq1}$. In fact, by Corollary \ref{result:prelim:kernels:s+r-rcd} and Theorem \ref{introTh_1},
\begin{equation}\label{beyond:eq1}
(X_n)_{n\geq1}\mbox{ is exchangeable }\qquad\Longleftrightarrow\qquad R\mbox{ satisfies }\eqref{condition:stationarity}\mbox{ and }\eqref{condition:reversible},
\end{equation}
so that exchangeability is an emergent property when $R$ is (self-)averaging. On the other hand, by Proposition 2.1 in \cite{kallenberg1988}, a stochastic process $(Y_n)_{n\geq1}$ is exchangeable if and only if it is both stationary and \textit{conditionally identically distributed} (c.i.d.), i.e., for each $n=0,1,\ldots$,
\[(Y_1,\ldots,Y_n,Y_{n+1})\overset{d}{=}(Y_1,\ldots,Y_n,Y_{n+2}).\]
By \citet[][eq.\,(5)]{berti2004limit}, the latter is equivalent to $(\mathbb{P}(Y_{n+1}\in A|Y_1,\ldots Y_n))_{n\geq0}$ being a martingale, for all $A\in\mathcal{X}$, so there exists a random probability measure $\tilde{P}$, known again as the \textit{directing random measure} of $(Y_n)_{n\geq1}$, which is the limit of its predictive distributions. It follows from this result and Lemma 8.2 in \cite{aldous1985exchangeability} that $(Y_n)_{n\geq1}$ is asymptotically exchangeable with limit directing random measure $\tilde{P}$. Thus, conditional identity in distributions permits temporal asymmetries and path-dependent effects, although the process achieves stability in the long-run (see, e.g., the class of generalized species sampling sequences by \cite{bassetti2010} and Example \ref{example:other:cid}). Since c.i.d. processes retain part of the predictive structure associated with exchangeability, it is natural to investigate how the the reinforcement kernel of an MVPS is affected under this weaker assumption. The following result addresses the case of a balanced MVPS.

\begin{proposition}\label{results:other:cid}
Let $(X_n)_{n\geq1}$ be a balanced MVPS$(\theta,\nu,R)$. Then $(X_n)_{n\geq1}$ is c.i.d. if and only if $R$ satisfies \eqref{condition:stationarity} and \eqref{condition:reversible}.
\end{proposition}

Proposition \ref{results:other:cid} implies through \eqref{beyond:eq1} that, for a balanced MVPS $(X_n)_{n\geq1}$,
\[(X_n)_{n\geq1}\mbox{ is exchangeable }\qquad\Longleftrightarrow\qquad (X_n)_{n\geq1}\mbox{ is c.i.d.}\]
Therefore, \eqref{model:mvps:predictive} imposes structural constraints on the reinforcement kernel that rule out the type of temporal distortions, which are otherwise compatible with the conditional identity in distribution. Consequently, when $R$ is balanced, stationarity follows from both the predictive structure \eqref{model:mvps:predictive} and the c.i.d. assumption. Thus, in particular, every balanced c.i.d. GPU is exchangeable.

A more recent direction of research in Bayesian nonparametrics (see, e.g., \cite{berti2021,fong2023,battiston2025,fortini2025}), looks at recursive predictive constructions of probability laws that give rise to c.i.d. processes, attempting to model situations where exchangeability is violated due to innate asymmetries, forms of selection and competition, or the presence of temporary disequilibrium; see also \cite{bassetti2010,sariev2023}. From this perspective, MVPSs form a particularly tractable class of predictive schemes that can be viewed as marking a natural boundary between genuinely non-exchangeable c.i.d. behavior and exchangeability. For balanced MVPS, letting $P_n(\cdot)=\mathbb{P}(X_{n+1}\in\cdot\mid X_1,\ldots,X_n)$, we get
\begin{equation}\label{beyond:eq2}
P_n(\cdot)=\frac{\theta+n-1}{\theta+n}P_{n-1}(\cdot)+\frac{1}{\theta+n}R_{X_n}(\cdot).
\end{equation}
As an extension, \cite{berti2021} consider the system of predictive distributions
\begin{equation}\label{beyond:eq3}
P_n(\cdot)=q_nP_{n-1}(\cdot)+(1-q_n)R_{X_n}(\cdot),
\end{equation}
where $q_n:\mathbb{X}^n\rightarrow[0,1]$ is an $\mathcal{X}^n$-measurable function, and $R$ a probability kernel on $\mathbb{X}$. By Theorem 5 in \cite{berti2021}, if $R$ satisfies \eqref{condition:stationarity} w.r.t. $\nu=P_0$ and \eqref{condition:reversible} (or, equivalently, $R$ is an r.c.d. for $\nu$, by Corollary \ref{result:prelim:kernels:s+r-rcd}), then \eqref{beyond:eq3} generates a c.i.d. process. A natural question, which we do not pursue here, is to identify conditions on $q_n$ that lead to an exchangeable sequence. By Proposition \ref{results:other:cid} and \eqref{beyond:eq2}, the choice $q_n=1-1/(\theta+n)$ is sufficient, so a complete answer would clarify what aspects of MVPS structure are responsible for making c.i.d. MVPSs exchangeable.

Returning to c.i.d. MVPSs with a general reinforcement kernel $R$, the following simple example shows that it is still possible to have an unbalanced MVPS that is c.i.d. but not exchangeable.

\begin{example}\label{example:other:cid}
Let $(X_n)_{n\geq1}$ be an MVPS$(1,\nu,R)$ on $\mathbb{X}=\{1,\ldots,4\}$, where $\nu=(\nu_1,\nu_2,\nu_1,\nu_2)$, for some $\nu_1,\nu_2\in(0,1)$, and
\[R=\left[\begin{array}{cccc} \nu_1 & \nu_2 & 0 & 0 \\ 2\nu_1 & 2\nu_2 & 0 & 0 \\ 0 & 0 & \nu_1 & \nu_2 \\ 0 & 0 & 2\nu_1 & 2\nu_2 \end{array}\right].\]
Fix $n\in\{0,1,\ldots\}$. Define
\[T_n^{(1)}:=\sum_{i=1}^nR_{X_i}(\mathbb{X})\cdot\mathbbm{1}_{\{X_i=1,2\}},\qquad T_n^{(2)}:=\sum_{i=1}^nR_{X_i}(\mathbb{X})\cdot\mathbbm{1}_{\{X_i=3,4\}},\qquad D_n:=T_n^{(1)}+T_n^{(2)}.\]
Notice that $R_x(\{y\})=2\nu_yR_x(\mathbb{X})\cdot\mathbbm{1}_{\{x=1,2\}}$ for $y=1,2$, and $R_x(\{y\})=2\nu_yR_x(\mathbb{X})\cdot\mathbbm{1}_{\{x=3,4\}}$ for $y=3,4$. Then
\begingroup\allowdisplaybreaks
\begin{align*}
\mathbb{E}[P_{n+1}(\{1\})|X_1,\ldots,X_n]&=\sum_{x=1}^4\frac{\nu_1+\sum_{i=1}^nR_{X_i}(\{1\})+R_x(\{1\})}{1+\sum_{i=1}^nR_{X_i}(\mathbb{X})+R_x(\mathbb{X})}P_n(\{x\})\\
&\begin{aligned}\;=\biggl(\frac{\nu_1+2\nu_1T_n^{(1)}+\nu_1}{1+D_n+\frac{1}{2}}&\frac{\nu_1+2\nu_1T_n^{(1)}}{1+D_n}+\frac{\nu_1+2\nu_1T_n^{(1)}+2\nu_1}{1+D_n+1}\frac{\nu_2+2\nu_2T_n^{(1)}}{1+D_n}\\
&+\frac{\nu_1+2\nu_1T_n^{(1)}}{1+D_n+\frac{1}{2}}\frac{\nu_1+2\nu_1T_n^{(2)}}{1+D_n}+\frac{\nu_1+2\nu_1T_n^{(1)}}{1+D_n+1}\frac{\nu_2+2\nu_2T_n^{(2)}}{1+D_n}\biggr)\end{aligned}\\
&\begin{aligned}\;=\frac{\nu_1+2\nu_1T_n^{(1)}}{1+D_n}\biggl(\nu_1\frac{1+2T_n^{(1)}+1}{1+D_n+\frac{1}{2}}&+\nu_2\frac{1+2T_n^{(1)}+2}{1+D_n+1}\\
&+\nu_1\frac{1+2T_n^{(2)}}{1+D_n+\frac{1}{2}}+\nu_2\frac{1+2T_n^{(2)}}{1+D_n+1}\biggr)\end{aligned}\\
&=P_n(\{1\})\biggl(\nu_1\frac{2+2D_n+1}{1+D_n+\frac{1}{2}}+\nu_2\frac{2+2D_n+2}{1+D_n+1}\biggr)\\
&=P_n(\{1\}),
\end{align*}
\endgroup
where we have used that $\nu_1+\nu_2=\frac{1}{2}$. Similarly, $\mathbb{E}[P_{n+1}(\{x\})|X_1,\ldots,X_n]=P_n(\{x\})$, for all $x\in\mathbb{X}$. Therefore, $(P_n(A))_{n\geq0}$ is a martingale, for all $A\subseteq\mathbb{X}$, which implies that $(X_n)_{n\geq1}$ is c.i.d. But $R_1(\mathbb{X})\neq R_2(\mathbb{X})$, so the model is unbalanced and, by Theorem \ref{introTh_1}, $(X_n)_{n\geq1}$ is not exchangeable.
\end{example}

The next result shows that the particular block-diagonal form of the reinforcement kernel $R$ in Example \ref{example:other:cid} is not accidental, but is the only one that allows a c.i.d. MVPS to be unbalanced on a finite state space.

\begin{theorem}\label{results:other:cid-balanced}
Let $\mathbb{X}$ be finite, and $(X_n)_{n\geq1}$ an MVPS$(\theta,\nu,R)$ with strictly positive reinforcement such that $\nu(\{x\})>0$, for all $x\in\mathbb{X}$. Then $(X_n)_{n\geq1}$ is c.i.d. if and only if there exists a partition $B_1,\ldots,B_m$ of $\mathbb{X}$, for some $1\leq m\leq|\mathbb{X}|$, such that
\begin{enumerate}
\item for each $j=1,\ldots,m$ and all $x\in B_j$,
\[\frac{R_x(\cdot)}{R_x(\mathbb{X})}=\nu(\cdot\mid B_j);\]
\item for each $j=1,\ldots,m$ and all $a\in(0,\infty)$,
\[\nu(R(\mathbb{X})=a|B_j)=\nu(R(\mathbb{X})=a).\]
\end{enumerate}
\end{theorem}

Although the conclusions of Theorem \ref{results:other:cid-balanced} are comparable to those of Theorem \ref{introTh_1}(\textit{2.}), the arguments leading to them are substantially different. In particular, by applying the maximal Azuma-Hoeffding inequality, we show that the support of the law of the directing random measure of any finite c.i.d. MVPS is convex and that its extreme points are the normalized kernels $R_x$. Using this fact, we show that, up to a constant rescaling, $R$ satisfies \eqref{condition:stationarity} and \eqref{condition:reversible}, from which we derive its structural form, see also Example \ref{result:prelim:kernels:s+r-rcd:example}.

Similarly to exchangeable GPUs (Example \ref{results:hierarchical:example:k-color}), Theorem \ref{results:other:cid-balanced}(\textit{1}.) states that, after normalization, the reinforcement kernel $R$ has a block-diagonal structure, where each block is equal to the conditional probability of $\nu$, given that a color from the same block is observed. In this case, however, $R(\mathbb{X})$ need not be constant. Nevertheless, Theorem \ref{results:other:cid-balanced}(\textit{2}.) restricts its variability by requiring that, within each block, the conditional distribution of the values of $R(\mathbb{X})$ coincides with the unconditional one. In particular, in Example \ref{example:other:cid}, we have $R_1(\mathbb{X})=R_3(\mathbb{X})=\nu_1+\nu_2=\frac{1}{2}$ and $R_2(\mathbb{X})=R_4(\mathbb{X})=1$, so for all $a\in(0,\infty)$,
\begingroup\allowdisplaybreaks
\begin{align*}
\nu(R(\mathbb{X})=a|\{1,2\})&=\mathbbm{1}_{\{R_1(\mathbb{X})=a\}}\frac{\nu_1}{\nu_1+\nu_2}+\mathbbm{1}_{\{R_2(\mathbb{X})=a\}}\frac{\nu_2}{\nu_1+\nu_2}\\
&=\frac{1}{2}\mathbbm{1}_{\{R_1(\mathbb{X})=a\}}\frac{\nu_1}{\frac{1}{2}}+\frac{1}{2}\mathbbm{1}_{\{R_3(\mathbb{X})=a\}}\frac{\nu_1}{\frac{1}{2}}+\frac{1}{2}\mathbbm{1}_{\{R_2(\mathbb{X})=a\}}\frac{\nu_2}{\frac{1}{2}}+\frac{1}{2}\mathbbm{1}_{\{R_4(\mathbb{X})=a\}}\frac{\nu_2}{\frac{1}{2}}\\
&=\nu(R(\mathbb{X})=a),
\end{align*}
\endgroup
and, analogously, $\nu(R(\mathbb{X})=a|\{3,4\})=\nu(R(\mathbb{X})=a)$.

When $\mathbb{X}$ is not finite, we expect an analogue of Theorem \ref{results:other:cid-balanced} to remain valid, and its proof, although technically more involved, to follow similar lines. However, even if $R$ was shown to satisfy \eqref{condition:stationarity} and \eqref{condition:reversible}, up to a constant rescaling, it is not yet clear how to derive a counterpart of Theorem \ref{results:other:cid-balanced}\textit{(2.)}, and establishing such a result would likely require an extension of the theory developed in Section \ref{section:kernel:rcd}. A related statement from functional analysis is given, for example, by Corollary 3.7 in \cite{dodds1990}, which shows that a strictly positive, order continuous projection $R$ satisfying $R\textbf{1}>0$ is a weighted conditional expectation.

\section{Proofs}\label{section:proofs}

We denote $\mathbb{N}=\{1,2,\ldots\}$ and $\mathbb{N}_0=\{0,1,2,\ldots\}$.

\subsection{Proofs of the results in Section \ref{section:kernel:rcd}}

\begin{proof}[Proof of Theorem \ref{result:prelim:kernels:rcd}]
Suppose that $R$ satisfies \eqref{condition:stationarity1} and \eqref{condition:reversible1}, and that $\mathcal{G}\cap C_0=\sigma(R_{|\mathcal{G}})\cap C_0$ is c.g. for some $C_0\in\mathcal{G}$ such that $\nu(C_0)=1$. Define $C_n:=\{x\in C_{n-1}:R_x(C_{n-1})=1\}$, for $n\in\mathbb{N}$. Then
\[C_1=\{R(C_0)=1\}\cap C_0=\{R_{|\mathcal{G}}(C_0)\}\cap C_0\in\mathcal{G}\cap C_0\subseteq\mathcal{G},\]
and, by induction, $C_n=\{R(C_{n-1})=1\}\cap C_{n-1}\cap C_0\in\mathcal{G}$, for all $n\in\mathbb{N}$. It now follows from \eqref{condition:stationarity1} that
\begingroup\allowdisplaybreaks
\begin{align*}
1=\nu(C_0)&=\int_\mathbb{X}R_x(C_0)\nu(dx)\\
&=1-\int_{\mathbb{X}}(1-R_x(C_0))\nu(dx)=1-\int_{C_1^c}(1-R_x(C_0))\nu(dx).
\end{align*}
\endgroup
But $R_x(C_0)<1$ for $x\in C_1^c$, so $\nu(C_1^c)=0$; otherwise, the term on the right-hand side of the equation becomes strictly less than $1$. Proceeding by induction, we get $\nu(C_n)=1$ for all $n\in\mathbb{N}_0$; thus, letting $C^*:=\bigcap_{n=0}^\infty C_n$, we have $C^*\in\mathcal{G}$, $\nu(C^*)=1$, $R_x(C^*)=1$ for all $x\in C^*$, and that $\mathcal{G}\cap C^*=\sigma(R_{|\mathcal{G}})\cap C^*$ is c.g.

Let us define
\[R^*_x(B):=R_x(B),\qquad\mbox{for }B\in\mathcal{G}\cap C^*\mbox{ and }x\in C^*.\]
Then $R^*:C^*\times\mathcal{G}\cap C^*\rightarrow[0,1]$ is a probability kernel on $C^*$, $\mathcal{G}\cap C^*=\sigma(R^*)$, and since $\mathcal{G}\cap C^*$ is c.g., for all $x\in C^*$,
\[[x]_\mathcal{G}=[x]_\mathcal{G}\cap C^*=[x]_{\mathcal{G}\cap C^*}=[x]_{\sigma(R^*)}=\{y\in C^*:R^*_y=R_x^*\}\in\sigma(R^*).\]
Moreover, for all $A\in\mathcal{G}\cap C^*$,
\begingroup\allowdisplaybreaks
\begin{gather}\label{proof:connections:reverse<->total:eq0}
\begin{gathered}
\int_{C^*}R^*_x(A)\nu(dx)=\nu(A),\\
\int_{C^*}R_y^*(A)R_x^*(dy)=R_x^*(A),\quad\mbox{for }\nu\mbox{-a.e. }x\in C^*.
\end{gathered}
\end{gather}
\endgroup
Using again the fact that $\mathcal{G}\cap C^*$ is c.g., we obtain that, as measures on $(C^*,\mathcal{G}\cap C^*)$,
\begin{equation}\label{proof:connections:reverse<->total:eq2}
\int_{C^*}R_x^*(dy)\nu(dx)=\nu(dy).
\end{equation}
Let $A\in\mathcal{G}\cap C^*$. It follows from \eqref{proof:connections:reverse<->total:eq0} and \eqref{proof:connections:reverse<->total:eq2} that
\begingroup\allowdisplaybreaks
\begin{align*}
\int_{C^*}\biggl\{\int_{C^*}\bigl(R_y^*(A)&-R_x^*(A)\bigr)^2R_x^*(dy)\biggr\}\nu(dx)\\
&\begin{aligned}=\int_{C^*}\biggl\{\int_{C^*}\bigl(R_y^*(A)\bigr)^2R_x^*(dy)\biggr\}\nu(dx)&+\int_{C^*}\biggl\{\int_{C^*}\bigl(R_x^*(A)\bigr)^2R_x^*(dy)\biggr\}\nu(dx)\\
&-2\int_{C^*}R_x^*(A)\biggl\{\int_{C^*}R_y^*(A)R_x^*(dy)\biggr\}\nu(dx)\end{aligned}\\
&=\int_{C^*}\bigl(R_x^*(A)\bigr)^2\nu(dx)+\int_{C^*}\bigl(R_x^*(A)\bigr)^2\nu(dx)-2\int_{C^*}\bigl(R_x^*(A)\bigr)^2\nu(dx)\\
&=0.
\end{align*}
\endgroup
Therefore, $\int_{C^*}(R_y^*(A)-R_x^*(A))^2R_x^*(dy)=0$ for $\nu$-a.e. $x\in C^*$, so that $R_y^*(A)=R_x^*(A)$ for $R_x^*$-a.e. $y$. Since $\mathcal{G}\cap C^*$ is c.g., we obtain $R_x^*([x]_{\sigma(R^*)})=R_x^*(\{y\in C^*:R_y^*=R_x^*\})=1$, for $\nu$-a.e. $x\in C^*$. By a monotone class argument (see the proof of Lemma \ref{atoms_general:mixture:lemma}), $x\mapsto R_x^*([x]_{\sigma(R^*)})$ is $\mathcal{G}\cap C^*$-measurable, so $\{x\in C^*:R_x^*([x]_{\sigma(R^*)})=1\}\in\mathcal{G}\cap C^*$ and
\[\nu(\{x\in C^*:R_x([x]_{\mathcal{G}})=1\})=\nu(\{x\in C^*:R_y^*([x]_{\sigma(R^*)})=1\})=1,\]
which implies that there exists $G\in\mathcal{G}$ such that $\nu(G)=1$ and $R_x([x]_{\mathcal{G}})=1$, for all $x\in G$. It follows for every $A\in\mathcal{G}$ and $x\in G$ that $R_x(A)\geq R_x([x]_\mathcal{G})=1$ when $x\in A$, and $R_x(A)=1-R_x(A^c)\leq 0$ when $x\in A^c$. Thus, $R_x(A)=\delta_x(A)$, for all $A\in\mathcal{X}$ and $x\in G$, that is, $R$ satisfies \eqref{condition:proper}.\\

Conversely, if $R$ satisfies \eqref{condition:proper}, then there exists $F\in\mathcal{G}$ such that $\nu(F)=1$ and $R_x(A)=\delta_x(A)$, for all $A\in\mathcal{X}$ and $x\in F$. It follows that $\mathcal{G}\cap F=\sigma(R_{|\mathcal{G}})\cap F$ and
\[\int_\mathbb{X}R_x(A)\nu(dx)=\int_F\delta_x(A)\nu(dx)=\nu(A),\qquad\mbox{for all }A\in\mathcal{G}.\]
On the other hand, from \eqref{condition:cg_under}, there exists $C\in\mathcal{G}$ such that $\nu(C)=1$ and $\mathcal{G}\cap C$ is c.g. Then, arguing as in the first part, we can find $C^*\in\mathcal{G}$ such that $\nu(C^*)=1$, $R_x(C^*)=1$ for all $x\in C^*$, and $C^*\subseteq C\cap F$; simply apply the same arguments w.r.t. $C_0:=C\cap F$ and $C_n:=\{x\in C_{n-1}:R_x(C_{n-1})=1\}$, $n\in\mathbb{N}$, which are $\mathcal{G}$-measurable from \eqref{condition:proper}. It follows that $\mathcal{G}\cap C^*=\sigma(R_{|\mathcal{G}})\cap C^*=\sigma(R^*)$, where $R^*:C^*\times\mathcal{G}\cap C^*\rightarrow[0,1]$ is the probability kernel on $C^*$, defined by $R^*_x(B):=R_x(B)$, for $B\in\mathcal{G}\cap C^*$ and $x\in C^*$. Since $\mathcal{G}\cap C^*$ is c.g., then $[x]_\mathcal{G}=\{y\in C^*:R^*_y=R^*_x\}$, for all $x\in C^*$. Moreover, $R_x([x]_\mathcal{G})=1$, for all $x\in C^*$, so we obtain
\[\int_\mathbb{X}R_y(A)R_x(dy)=\int_{[x]_\mathcal{G}\cap C^*}R^*_y(A\cap C^*)R_x(dy)=R_x(A),\quad\mbox{for all }A\in\mathcal{G}\mbox{ and }x\in C^*.\]
\end{proof}

\begin{proof}[Proof of Proposition \ref{result:prelim:kernels:total-rcd}]
Suppose that $R$ satisfies \eqref{condition:proper} on $F\in\mathcal{G}$, where $\nu(F)=1$. Let $A\in\mathcal{G}$ and $B\in\mathcal{X}$. Fix $x\in F$. If $x\in A$, then $R_x(A)=1$, so $R_x(A\cap B)=R_x(B)$; otherwise, if $x\in A^c$, then $R_x(A)=0$, so $R_x(A\cap B)=0$. Therefore, $R_x(A\cap B)=R_x(B)\delta_x(A)$. It now follows from \eqref{condition:stationarity} and $\nu(F)=1$ that
\[\int_AR_x(B)\nu(dx)=\int_\mathbb{X}R_x(B)\delta_x(A)\nu(dx)=\int_FR_x(A\cap B)\nu(dx)=\nu(A\cap B).\]
By assumption, $x\mapsto R_x(B)$ is $\mathcal{G}$-measurable, for all $B\in\mathcal{X}$, so $R(\cdot)=\nu(\cdot\mid\mathcal{G})$. To complete the proof, recall from \eqref{condition:cg_under} that $\nu(\cdot\mid\mathcal{G})$ satisfies \eqref{condition:proper} if and only if $\mathcal{G}$ is c.g. under $\nu$.
\end{proof}

\begin{proof}[Proof of Lemma \ref{atoms_general:mixture:lemma}]
Since $\mathcal{G}$ is c.g. under $\nu$, by \eqref{condition:cg_under}, there exists $C\in\mathcal{G}$ such that $\nu(C)=1$, $\mathcal{G}\cap C$ is c.g., and $R_x(A)=\delta_x(A)$, for all $A\in\mathcal{G}$ and $x\in C$. In fact, arguing as in the first part of the proof of Theorem \ref{result:prelim:kernels:rcd}, we can assume without loss of generality that $R_x(C)=1$, for all $x\in C$. Let $A\in\mathcal{G}$. Then
\[A\cap C=\{\delta(A)=1\}\cap C=\{R(A)=1\}\cap C\in\sigma(R)\cap C.\]
But $\sigma(R)\subseteq\mathcal{G}$, so $\mathcal{G}\cap C=\sigma(R)\cap C$, which implies that $[x]_{\mathcal{G}}=[x]_{\mathcal{G}\cap C}=[x]_{\sigma(R)\cap C}$, for all $x\in C$.

By hypothesis, $\mathcal{G}\cap C=\sigma(E_1,E_2,\ldots)$, for some $\pi$-class $\{E_1,E_2,\ldots\}\in\mathcal{G}\cap C$ on $C$. Let us define
\[D:=\bigl\{(x,y)\in C^2:R_x^*=R_y^*\bigr\},\]
and denote by $D_x$ the $x$-section of $D$, where $R^*:C\times\mathcal{G}\cap C\rightarrow[0,1]$ is the probability kernel on $C$, defined by $R_x^*(B):=R_x(B)$, for $B\in\mathcal{G}\cap C$ and $x\in C$. Then
\[[x]_\mathcal{G}=[x]_{\sigma(R)\cap C}=[x]_{\sigma(R^*)}=\{y\in C:R_y^*=R_x^*\}=D_x,\qquad\mbox{for all }x\in C.\]
On the other hand, standard results imply that $D=\{(x,y)\in C^2:R_x^*(E_n)=R_y^*(E_n),n\in\mathbb{N}\}$. Since $(x,y)\mapsto(R_x^*(E_n),R_y^*(E_n))$ is $\mathcal{G}\cap C\otimes\mathcal{G}\cap C$-measurable and the diagonal of $[0,1]^2$ is a measurable set, then $D\in\mathcal{G}\cap C\otimes\mathcal{G}\cap C$. Finally, define
\[\mathcal{A}:=\Bigl\{E\in\mathcal{G}\cap C\otimes\mathcal{G}\cap C:x\mapsto\int_C\mathbbm{1}_E(x,y)R_x^*(dy)\mbox{ is }\mathcal{G}\cap C\mbox{-measurable}\Bigr\}.\]
Let $A,B\in\mathcal{G}\cap C$. It follows that $x\mapsto\int_C\mathbbm{1}_{A\times B}(x,y)R_x^*(dy)=R_x^*(B)\delta_x(A)$ is $\mathcal{G}\cap C$-measurable, so $A\times B\in\mathcal{A}$. In addition, it is easily seen that $\mathcal{A}$ is a $\lambda$-class, so by Dynkin's lemma, $\mathcal{A}=\mathcal{G}\cap C\otimes\mathcal{G}\cap C$. Therefore, $D\in\mathcal{A}$ and $x\mapsto R_x([x]_\mathcal{G})=\int_C\mathbbm{1}_D(x,y)R_x^*(dy)$ is $\mathcal{G}\cap C$-measurable.
\end{proof}

\subsection{Proofs of the results in Section \ref{section:results:hierarchy}}

\begin{proof}[Proof of Proposition \ref{atoms_general}]
It follows from Theorem \ref{introTh_1} that $R(\cdot)=\nu(\cdot\mid\mathcal{G})$ for some c.g. under $\nu$ sub-$\sigma$-algebra $\mathcal{G}\subseteq\mathcal{X}$. By \eqref{condition:cg_under}, there exists $C\in\mathcal{G}$ such that $\nu(C)=1$ and
\begin{equation}\label{atoms_general:proof:eq:proper}
R_x(G)=\delta_x(G),\qquad\mbox{for all }G\in\mathcal{G}\mbox{ and }x\in C.
\end{equation}
Using Boole's inequality and the fact that $(X_n)_{n\geq1}$ are identically distributed with marginal distribution $\nu$, we obtain for all $n\in\mathbb{N}$,
\[\mathbb{P}(X_1\in C,\ldots,X_n\in C)\geq\sum_{i=1}^n\mathbb{P}(X_i\in C)-(n-1)=1.\]

Let $\pi$ and $\Pi$ be as in Section \ref{section:model:atoms} w.r.t. $\mathcal{G}$. Then $\pi$ is $\mathcal{G}\backslash\pi(\mathcal{G})$-measurable. Define $X_n':=\pi(X_n)$, for $n\in\mathbb{N}$. It follows from the exchangeability of $(X_n)_{n\geq1}$ that $(X_n')_{n\geq1}$ is an exchangeable sequence of $\Pi$-valued random variables. Moreover, by \eqref{model:atoms:eq1},
\[\{X_n'\in\pi(B)\}=\{X_n\in B\},\qquad\mbox{for all }B\in\mathcal{G}.\]
Let $n\in\mathbb{N}$ and $B_1,\ldots,B_{n+1}\in\mathcal{G}$. Using \eqref{atoms_general:proof:eq:proper}, we obtain
\begingroup\allowdisplaybreaks
\begin{align*}
\mathbb{E}\bigl[\mathbbm{1}_{\pi(B_1)}(X_1')\cdots\mathbbm{1}_{\pi(B_n)}&(X_n')\cdot\mathbb{P}(X_{n+1}'\in\pi(B_{n+1})|X_1',\ldots,X_n')\bigr]\\
&=\mathbb{E}\bigl[\mathbbm{1}_{B_1}(X_1)\cdots\mathbbm{1}_{B_n}(X_n)\cdot\mathbb{P}(X_{n+1}\in B_{n+1}|X_1,\ldots,X_n)\bigr]\\
&=\mathbb{E}\biggl[\mathbbm{1}_{B_1}(X_1)\cdots\mathbbm{1}_{B_n}(X_n)\cdot\frac{\theta\nu(B_{n+1})+\sum_{i=1}^nR_{X_i}(B_{n+1})}{\theta+n}\cdot\mathbbm{1}_{C^n}(X_1,\ldots,X_n)\biggr]\\
&=\mathbb{E}\biggl[\mathbbm{1}_{\pi(B_1)}(X_1')\cdots\mathbbm{1}_{\pi(B_n)}(X_n')\cdot\frac{\theta\nu_\pi(\pi(B_{n+1}))+\sum_{i=1}^n\delta_{X_i'}(\pi(B_{n+1}))}{\theta+n}\biggr].
\end{align*}
\endgroup
Therefore, $(X_n')_{n\geq1}$ is an exchangeable MVPS$(\theta,\nu_\pi,\delta)$ and, by Theorem \ref{introTh_2}, has directing random measure \eqref{Sethuraman} w.r.t. the parameters $(\theta,\nu_\pi)$, that is, $(X_n')_{n\geq1}$ is a PS.
\end{proof}

\begin{proof}[Proof of Theorem \ref{atoms_general:mixture}]
Let $(X_n)_{n\geq1}$ be an exchangeable MVPS*$(\theta,\nu,R)$ with $\nu(Z)=0$ and directing random measure $\tilde{P}$. It follows from Theorem \ref{introTh_1} that $R(\cdot)=\nu(\cdot\mid\mathcal{G})$, for some sub-$\sigma$-algebra $\mathcal{G}\subseteq\mathcal{X}$, which is c.g. under $\nu$. Let $\pi$ and $\Pi$ be as in Section \ref{section:model:atoms} w.r.t. $\mathcal{G}$. Then $\pi$ is $\mathcal{G}\backslash\pi(\mathcal{G})$-measurable, $\sigma(\pi)=\mathcal{G}$, and $\{[x]_{\mathcal{G}}\}=\pi([x]_\mathcal{G})\in\pi(\mathcal{G})$ for all $x$ in some $E\in\mathcal{G}$ such that $\nu(E)=1$.
Suppose that $(Y_n)_{n\geq1}$ satisfies \eqref{atoms_general:mixture:eq} w.r.t. $\pi$. Let $n\in\mathbb{N}$ and $A_1,\ldots,A_n\in\mathcal{X}$. It follows from \eqref{Sethuraman} w.r.t. the parameters $(\theta,\nu_\pi)$ that
\begingroup\allowdisplaybreaks
\begin{align*}
\mathbb{P}(Y_1\in A_1,\ldots,Y_n\in A_n)&=\mathbb{E}\biggl[\prod_{i=1}^n\nu(A_i|\pi=\tilde{p}_i)\biggr]\\
&=\mathbb{E}\biggl[\prod_{i=1}^n\int_{\Pi}\nu(A_i|\pi=p)\tilde{Q}(dp)\biggr]\\
&=\mathbb{E}\biggl[\prod_{i=1}^n\sum_{j\geq1}V_j\int_\Pi\nu(A_i|\pi=p)\delta_{U_j}(dp)\biggr]\\
&=\mathbb{E}\biggl[\prod_{i=1}^n\sum_{j\geq1}V_j\int_\Pi\nu(A_i|\pi=p)\delta_{\pi(U_j^*)}(dp)\biggr]\\
&=\mathbb{E}\biggl[\prod_{i=1}^n\sum_{j\geq1}V_j\,\nu\bigl(A_i|\pi=\pi(U_j^*)\bigr)\biggr]=\mathbb{E}\biggl[\prod_{i=1}^n\sum_{j\geq1}V_j\,\nu\bigl(A_i|\mathcal{G}\bigr)(U_j^*)\Bigr]=\mathbb{E}\biggl[\prod_{i=1}^n\tilde{P}(A_i)\biggr],
\end{align*}
\endgroup
for some $U_1^*,U_2^*,\ldots\overset{i.i.d.}{\sim}\nu$ independent of $(V_j)_{j\geq1}$. Therefore, $(X_n)_{n\geq1}\overset{d}{=}(Y_n)_{n\geq1}$. Using a suitable randomization, see, e.g., Theorem 8.17 in \cite{kallenberg2021}, we can find $\bigl((\tilde{p}_n^*)_{n\geq1},\tilde{Q}^*\bigr)$ such that $\bigl((X_n)_{n\geq1},(\tilde{p}_n^*)_{n\geq1},\tilde{Q}^*\bigr)\overset{d}{=}\bigl((Y_n)_{n\geq1},(\tilde{p}_n)_{n\geq1},\tilde{Q}\bigr)$, that is, $(X_n)_{n\geq1}$ satisfies the distributional statement \eqref{atoms_general:mixture:eq}.\\

Regarding the converse result, suppose that $(X_n)_{n\geq1}$ satisfies \eqref{atoms_general:mixture:eq}, where $\pi:\mathbb{X}\rightarrow\Pi$ is $\sigma(\pi)\backslash\mathcal{P}$-measurable, $\sigma(\pi)$ is c.g. under $\nu$, and $(\Pi,\mathcal{P},\nu_\pi)$ is a probability space such that $\{p\}\in\mathcal{P}$ for $\nu_\pi$-a.e. $p$. It follows from the first part that $(X_n)_{n\geq1}$ is an exchangeable sequence with directing random measure $\tilde{P}(\cdot)=\int_\Pi\nu(\cdot\mid\pi=p)\tilde{Q}(dp)$. Using \eqref{eq:model:prelim:predict_mart}, we obtain from the posterior distribution of a Dirichlet process, see, e.g., \cite[][eq.\,(5.3)]{ghosal2017}, that, for every $A\in\mathcal{X}$,
\begingroup\allowdisplaybreaks
\begin{align}
\mathbb{P}(X_{n+1}\in A|X_1,\ldots,X_n)&=\mathbb{E}[\tilde{P}(A)|X_1,\ldots,X_n]\nonumber\\
&=\mathbb{E}\biggl[\mathbb{E}\Bigl[\int_\Pi\nu(A|\pi=p)\tilde{Q}(dp)\;\bigr|\; \tilde{p}_1,X_1,\ldots,\tilde{p}_n,X_n\Bigr]\;\bigr|\; X_1,\ldots,X_n\biggr]\nonumber\\
&=\mathbb{E}\biggl[\frac{\int_\Pi\nu(A|\pi=p)(\theta\nu_{\pi})(dp)+\sum_{i=1}^n\nu(A|\pi=\tilde{p}_i)}{\theta+n}\;\bigr|\; X_1,\ldots,X_n\biggr]\nonumber\\
&=\frac{\theta\int_\Pi\nu(A|\pi=p)\nu_{\pi}(dp)+\sum_{i=1}^n\mathbb{E}\bigl[\nu(A|\pi=\tilde{p}_i)|X_1,\ldots,X_n\bigr]}{\theta+n}\qquad\mbox{a.s.}\label{atoms_general:mixture:proof:eq}
\end{align}
\endgroup
By hypothesis, $\sigma(\pi)$ is c.g. under $\nu$, so \eqref{condition:total} and \eqref{condition:cg_under} imply the existence of a set $C\in\sigma(\pi)$ such that $\nu(C)=1$, $[x]_{\sigma(\pi)}\in\sigma(\pi)$ and $\nu([x]_{\sigma(\pi)}|\sigma(\pi))(x)=1$, for all $x\in C$. By Lemma \ref{atoms_general:mixture:lemma}, $x\mapsto\nu([x]_{\sigma(\pi)}|\sigma(\pi))(x)$ is $\sigma(\pi)$-measurable a.e.$[\nu]$. Moreover,
\begingroup\allowdisplaybreaks
\begin{align*}
[x]_{\sigma(\pi)}&=\bigcup_{x\in\pi^{-1}(P):P\in\mathcal{P}}\pi^{-1}(P)\\
&=\pi^{-1}\biggl(\bigcup_{\pi(x)\in P\in\mathcal{P}}P\biggr)=\pi^{-1}\bigl([\pi(x)]_{\mathcal{P}}\bigr)=\pi^{-1}\bigl(\{\pi(x)\}\bigr)=\{\pi=\pi(x)\},
\end{align*}
\endgroup
for $\nu$-a.e. $x$, since $\{p\}\in\mathcal{P}$ for $\nu_\pi$-a.e. $p$. From these facts and \eqref{atoms_general:mixture:eq}, we obtain, for each $i=1,\ldots,n$,
\begingroup\allowdisplaybreaks
\begin{align*}
\mathbb{P}(\pi(X_i)=\tilde{p}_i)&=\mathbb{E}\bigl[\mathbb{P}(\pi(X_i)=\tilde{p}_i|\tilde{p}_i)\bigr]\\
&=\mathbb{E}\bigl[\nu(\pi=\tilde{p}_i|\pi=\tilde{p}_i)\bigr]\\
&\overset{(a)}{=}\int_\Pi\nu(\pi=p|\pi=p)\nu_\pi(dp)\\
&\overset{(b)}{=}\int_\mathbb{X}\nu(\pi=\pi(x)|\pi=\pi(x))\nu(dx)=\int_C\nu([x]_{\sigma(\pi)}|\sigma(\pi))(x)\nu(dx)=1,
\end{align*}
\endgroup
where in $(a)$ and $(b)$ we have used the change of variables formula, noting that $p\mapsto\mathbbm{1}_{\{\pi=p\}}(y)$ is $\nu_\pi$-a.e. measurable from the assumption that $\mathcal{P}$ contains $\nu_\pi$-almost every singleton of $\Pi$. Proceeding from \eqref{atoms_general:mixture:proof:eq},
\begingroup\allowdisplaybreaks
\begin{align*}
\mathbb{P}(X_{n+1}\in A|X_1,\ldots,X_n)&=\frac{\theta\int_\Pi\nu(A|\pi=p)\nu_{\pi}(dp)+\sum_{i=1}^n\mathbb{E}\bigl[\nu(A|\pi=\tilde{p}_i)|X_1,\ldots,X_n\bigr]}{\theta+n}\\
&=\frac{\theta\nu(A)+\sum_{i=1}^n\mathbb{E}\bigl[\nu(A|\pi=\pi(X_i))|X_1,\ldots,X_n\bigr]}{\theta+n}\\
&=\frac{\theta\nu(A)+\sum_{i=1}^n\nu(A|\sigma(\pi))(X_i)}{\theta+n}\qquad\mbox{a.s.},
\end{align*}
\endgroup
that is, $(X_n)_{n\geq1}$ is an exchangeable MVPS with parameters $\bigl(\theta,\nu,\nu(\cdot\mid\sigma(\pi))\bigr)$.
\end{proof}

\begin{remark}\label{atoms_general:mixture:proof:remark}
In proving the converse statement of Theorem \ref{atoms_general:mixture}, the assumption that $\sigma(\pi)$ is c.g. under $\nu$ or, equivalently, that $\nu(\cdot\mid\sigma(\pi))$ satisfies \eqref{condition:proper} is essential. First, observe that $\sigma\bigl(\nu(\cdot\mid\sigma(\pi))\bigr)\subseteq\sigma(\pi)$, so $[x]_{\sigma(\pi)}\subseteq\bigl\{y\in\mathbb{X}:\nu(\cdot\mid\sigma(\pi))(y)=\nu(\cdot\mid\sigma(\pi))(x)\bigr\}$, for all $x\in\mathbb{X}$. As a result, \eqref{condition:proper} implies through \eqref{condition:total} that $\nu(\cdot\mid\sigma(\pi))$ is ``block-diagonal'' in the sense that, for $\nu$-a.e. $x,y$ belonging to the same $\sigma(\pi)$-atom, the measures $\nu(\cdot\mid\sigma(\pi))(y)=\nu(\cdot\mid\sigma(\pi))(x)$ are identical and have full support on $[x]_{\sigma(\pi)}$. Since $X_i$ is sampled from $\nu(\cdot\mid\pi=\tilde{p}_i)$, this fact guarantees us that $\pi(X_i)$ and $\tilde{p}_i$ carry the same information.
\end{remark}

\subsection{Proofs of the results in Section \ref{section:results:null}}

\begin{proof}[Proof of Proposition \ref{results:null:non-negative}]
Let $B\in\mathcal{X}$. From \eqref{model:mvps:tenable}, $\mathbb{P}(\theta\nu(B)+\sum_{i=1}^nR_{X_i}(B)\geq0)=1$, for all $n\in\mathbb{N}$. Fix $\epsilon>0$. Define $G_\epsilon:=\{x\in\mathbb{X}:R_x(B)<-\epsilon\}$ and $N:=\bigl\lceil\frac{\theta\nu(B)}{\epsilon}\bigr\rceil+1$. Assume that $\nu(G_\epsilon)>0$. Letting $\tilde{P}$ be the directing random measure of $(X_n)_{n\geq1}$, we obtain from Jensen's inequality that
\begingroup\allowdisplaybreaks
\begin{align*}
\mathbb{P}\Bigl(\theta\nu(B)+\sum_{i=1}^NR_{X_i}(B)<0\Bigr)&\geq\mathbb{P}(X_1\in G_\epsilon,\ldots,X_N\in G_\epsilon)\\
&=\mathbb{E}\bigl[\tilde{P}(G_\epsilon)^N\bigr]\geq\mathbb{E}[\tilde{P}(G_\epsilon)]^N=(\nu(G_\epsilon))^N>0,
\end{align*}
\endgroup
absurd, unless $\nu(G_\epsilon)=0$. Therefore, taking $\epsilon\downarrow0$, we get $R_x(B)\geq0$ for $\nu$-a.e. $x$.
\end{proof}

\begin{proof}[Proof of Theorem \ref{representation}]
Suppose that $(X_n)_{n\geq1}$ is an exchangeable but not i.i.d. MVPS such that $0<\nu(Z)<1$; otherwise, if $(X_n)_{n\geq1}$ is i.i.d., Theorem \ref{introTh_1} implies \eqref{R_x} w.r.t. $\mathcal{G}=\{\emptyset,Z,Z^c,\mathbb{X}\}$. By \eqref{R=m}, there exists a constant $m>0$ such that $R_x(\mathbb{X})=m$ for all $x\in Z^c$, without loss of generality. Our strategy for proving \eqref{R_x} is to consider $R_x(\cdot\cap Z)$ and $R_x(\cdot\cap Z^c)$ separately. 

Regarding $R_x(\cdot\cap Z)$, let $A,B\in \mathcal{X}$. By exchangeability, $(X_1,X_2)\overset{d}{=}(X_2,X_1)$, so
\begingroup\allowdisplaybreaks
\begin{align*}
\int_{A\cap Z^c}\frac{\theta\nu(B\cap Z)+R_x(B\cap Z)}{\theta+m}\nu(dx)&=\int_{A\cap Z^c}\mathbb{P}(X_2\in B\cap Z|X_1=x)\mathbb{P}(X_1\in dx)\\
&\hspace{2cm}=\mathbb{P}(X_1\in A\cap Z^c,X_2\in B\cap Z)\\
&\hspace{2cm}=\mathbb{P}(X_1\in B\cap Z,X_2\in A\cap Z^c)=\int_{B\cap Z}\nu(A\cap Z^c)\nu(dx),
\end{align*}
\endgroup
which after some simple algebra yields
\[\int_{A\cap Z^c}R_x(B\cap Z)\nu(dx)=\int_{A\cap Z^c}m\,\nu(B\cap Z)\nu(dx);\]
thus, since $A$ is arbitrary and $\mathcal{X}$ is c.g., we obtain, as measures on $\mathbb{X}$,
\begin{equation}\label{representation:eq1}
R_x(\cdot \cap Z)=m\,\nu(\cdot\cap Z)\qquad\mbox{for }\nu\mbox{-a.e. }x\in Z^c.
\end{equation}
Then, in particular, $R_x(Z)=m\,\nu(Z)$ and $R_x(Z^c)=m\,\nu(Z^c)$, for $\nu$-a.e. $x\in Z^c$.

Regarding $R_x(\cdot\cap Z^c)$, we will first focus on the sequence $(X_n)_{n\geq1}$ restricted to $Z^c$, which we will show to be an MVPS with strictly positive reinforcement, and then reason back to the whole sequence $(X_n)_{n\geq1}$. To that end, observe that
\[\mathbb{P}(X_{n+1}\in Z^c|X_1,\ldots,X_n)=\frac{\theta\nu(Z^c)+\sum_{i=1}^nR_{X_i}(Z^c)}{\theta+\sum_{i=1}^nR_{X_i}(\mathbb{X})}\geq\frac{\theta\nu(Z^c)}{\theta+n\cdot m},\]
so $\sum_{n=1}^\infty\mathbb{P}(X_{n+1}\in Z^c|X_1,\ldots,X_n)=\infty$, since $\nu(Z^c)>0$. It follows from the conditional Borel-Cantelli lemma, see, e.g., Theorem 1 in \cite{dubins1965sharper}, that $\sum_{n=1}^\infty\mathbbm{1}_{Z^c}(X_n)=\infty$ a.s., which implies $\mathbb{P}(X_n\in Z^c\mbox{ i.o.})=1$.

Let us define
\[T_0:=0\qquad\mbox{and}\qquad T_n:=\inf\{l>T_{n-1}:X_l\in Z^c\},\qquad\mbox{for }n\geq1.\]
It follows from above that $T_n<\infty$ a.s., so $\Omega^*:=\bigcap_{n=1}^\infty\{T_n<\infty\}$ satisfies $\mathbb{P}(\Omega^*)=1$. To keep the notation simple, we will assume, without loss of generality, that $(\Omega,\mathcal{H},\mathbb{P})=(\Omega^*, \mathcal{H}\cap\Omega^*,\mathbb{P}(\cdot\mid\Omega^*))$. Then $Y_n:=X_{T_n}$ is a well-defined $Z^c$-valued random variable, for all $n\in\mathbb{N}$.

We proceed by showing that the process $(Y_n)_{n\geq1}$ is an exchangeable MVPS with parameters $(\theta^*, \nu^*, R^*)$, where $\theta^*=\theta\nu(Z^c)$, $\nu^*(\cdot)=\nu(\cdot \mid Z^c)$, and $R_x^*(\cdot)=R_x(\cdot)$ on $(Z^c,\mathcal{X}\cap Z^c)$, for $x\in Z^c$. Let $A_1,\ldots,A_n,B\in\mathcal{X}\cap Z^c$, and $\sigma$ be a permutation of $\{1,\ldots,n\}$. It follows from the exchangeability of $(X_n)_{n\geq 1}$ that
\begingroup\allowdisplaybreaks
\begin{align*}
\mathbb{P}(Y_1 \in A_1, \ldots, Y_n \in A_n) &=\sum_{k_1<\cdots<k_n} \mathbb{P}(X_{T_1} \in A_1, \ldots, X_{T_n} \in A_n, T_1=k_1, \ldots, T_n=k_n)\\
&=\sum_{k_1<\cdots<k_n} \mathbb{P}(X_1 \in Z, \ldots, X_{k_1-1} \in Z, X_{k_1} \in A_1, X_{k_1+1} \in Z, \ldots, X_{k_n} \in A_n)\\
&=\sum_{k_1<\cdots<k_n} \mathbb{P}(X_{1} \in Z, \ldots, X_{k_1-1} \in Z, X_{k_1} \in A_{\sigma(1)}, X_{k_1+1} \in Z, \ldots, X_{k_n} \in A_{\sigma(n)})\\
&=\mathbb{P}(Y_1\in A_{\sigma(1)},\ldots,Y_n\in A_{\sigma(n)}).
\end{align*}
\endgroup
On the other hand, 
\begingroup\allowdisplaybreaks
\begin{align*}
\mathbb{E}\bigl[\mathbbm{1}_{A_1}(Y_1)\cdots&\mathbbm{1}_{A_n}(Y_n)\cdot\mathbb{P}(Y_{n+1}\in B|Y_1,\ldots,Y_n)\bigr]\\
&=\sum_{k_1<\cdots<k_{n+1}}\mathbb{E}\bigl[\mathbbm{1}_{A_1}(X_{k_1})\cdots\mathbbm{1}_{A_n}(X_{k_n})\mathbbm{1}_B(X_{k_{n+1}})\mathbbm{1}_{\{T_1=k_1,\ldots,T_{n+1}=k_{n+1}\}}\bigr]\\
&=\sum_{k_1<\cdots<k_{n+1}}\mathbb{E}\bigl[\mathbbm{1}_{A_1}(X_{k_1})\cdots\mathbbm{1}_{A_n}(X_{k_n})\mathbbm{1}_{B\cap Z^c}(X_{k_{n+1}})\mathbbm{1}_{\{T_1=k_1,\ldots,T_n=k_n\}}\mathbbm{1}_{\{T_{n+1}\geq k_{n+1}\}}\bigr]\\
&\begin{aligned}\,\overset{(a)}{=}&\sum_{k_1<\cdots<k_{n+1}}\mathbb{E}\bigl[\mathbbm{1}_{A_1}(X_{k_1})\cdots\mathbbm{1}_{A_n}(X_{k_n})\cdot\mathbb{P}(X_{k_{n+1}}\in B\cap Z^c|X_1,\ldots,X_{k_{n+1}-1})\\
&\hspace{7.5cm}\times\mathbbm{1}_{\{T_1=k_1,\ldots,T_n=k_n\}}\mathbbm{1}_{\{T_{n+1}\geq k_{n+1}\}}\bigr]\end{aligned}\\
&\begin{aligned}\;=&\sum_{k_1<\cdots<k_{n+1}}\mathbb{E}\biggl[\mathbbm{1}_{A_1}(X_{k_1})\cdots\mathbbm{1}_{A_n}(X_{k_n})\cdot\frac{\theta\nu(B\cap Z^c)+\sum_{i=1}^{k_{n+1}-1}R_{X_i}(B\cap Z^c)}{\theta+\sum_{i=1}^{k_{n+1}-1}R_{X_i}(\mathbb{X})}\\
&\hspace{7.5cm}\times\mathbbm{1}_{\{T_1=k_1,\ldots,T_n=k_n\}}\mathbbm{1}_{\{T_{n+1}\geq k_{n+1}\}}\biggr]\end{aligned}\\
&\begin{aligned}\;=&\sum_{k_1<\cdots<k_n}\mathbb{E}\biggl[\mathbbm{1}_{A_1}(X_{k_1})\cdots\mathbbm{1}_{A_n}(X_{k_n})\cdot\frac{\theta\nu(B\cap Z^c)+\sum_{j=1}^nR_{X_{k_j}}(B\cap Z^c)}{\theta+\sum_{j=1}^nR_{X_{k_j}}(\mathbb{X})}\\
&\hspace{6cm}\times\mathbbm{1}_{\{T_1=k_1,\ldots,T_n=k_n\}}\Bigl(\sum_{m=0}^\infty\mathbbm{1}_{\{T_{n+1}>k_n+m\}}\Bigr)\biggr]\end{aligned}\\
&\begin{aligned}\;=\sum_{k_1<\cdots<k_n}\mathbb{E}\biggl[\mathbbm{1}_{A_1}(X_{k_1})\cdots\mathbbm{1}_{A_n}&(X_{k_n})\cdot\frac{\theta\nu(B\cap Z^c)+\sum_{j=1}^nR_{X_{k_j}}(B\cap Z^c)}{\theta+\sum_{j=1}^nR_{X_{k_j}}(\mathbb{X})}\\
&\times\mathbbm{1}_{\{T_1=k_1,\ldots,T_n=k_n\}}\Bigl(\sum_{m=0}^\infty\mathbbm{1}_{\{X_{k_n+1}\in Z,\ldots,X_{k_n+m}\in Z\}}\Bigr)\biggr]\end{aligned}\\
&\begin{aligned}\;=\sum_{k_1<\cdots<k_n}&\mathbb{E}\biggl[\mathbbm{1}_{A_1}(X_{k_1})\cdots\mathbbm{1}_{A_n}(X_{k_n})\cdot\frac{\theta\nu(B\cap Z^c)+\sum_{j=1}^nR_{X_{k_j}}(B\cap Z^c)}{\theta+\sum_{j=1}^nR_{X_{k_j}}(\mathbb{X})}\\
&\times\mathbbm{1}_{\{T_1=k_1,\ldots,T_n=k_n\}}\Bigl(\sum_{m=0}^\infty\mathbb{P}(X_{k_n+1}\in Z,\ldots,X_{k_n+m}\in Z|X_1,\ldots,X_{k_n})\Bigr)\biggr]\end{aligned}\\
&\begin{aligned}\,\overset{(b)}{=}&\sum_{k_1<\cdots<k_n}\mathbb{E}\biggl[\mathbbm{1}_{A_1}(X_{k_1})\cdots\mathbbm{1}_{A_n}(X_{k_n})\cdot\frac{\theta\nu(B\cap Z^c)+\sum_{j=1}^nR_{X_{k_j}}(B\cap Z^c)}{\theta+\sum_{j=1}^nR_{X_{k_j}}(\mathbb{X})}\\
&\hspace{7cm}\times\mathbbm{1}_{\{T_1=k_1,\ldots,T_n=k_n\}}\Bigl(\sum_{m=0}^\infty\bigl(\nu(Z)\bigr)^m\Bigr)\biggr]\end{aligned}\\
&=\mathbb{E}\biggl[\mathbbm{1}_{A_1}(X_{T_1})\cdots\mathbbm{1}_{A_n}(X_{T_n})\cdot\frac{\theta\nu(B\cap Z^c)+\sum_{j=1}^nR_{X_{T_j}}(B\cap Z^c)}{\theta\nu(Z^c)+\sum_{j=1}^nR_{X_{T_j}}(\mathbb{X})\nu(Z^c)}\biggr]\\
&\overset{(c)}{=}\mathbb{E}\biggl[\mathbbm{1}_{A_1}(Y_1)\cdots\mathbbm{1}_{A_n}(Y_n)\cdot\frac{\theta^*\nu^*(B)+\sum_{j=1}^nR_{Y_j}^*(B)}{\theta^*+\sum_{j=1}^nR_{Y_j}^*(Z^c)}\biggr],
\end{align*}
\endgroup
where $(a)$ follows from $\{T_{n+1}\geq k_{n+1}\}\in\sigma(X_1,\ldots,X_{k_{n+1}-1})$; $(b)$ is a result of $\mathbb{P}(X_{n+1}\in Z|X_1,\ldots,X_n)=\nu(Z)$ a.s., using that $R_x(Z)=m\,\nu(Z)$ for $\nu$-a.e. $x\in Z^c$; and $(c)$ since $R_x^*(Z^c)=R_x(Z^c)=m\,\nu(Z^c)$ for $\nu$-a.e. $x\in Z^c$.
Therefore, by Theorem \ref{introTh_1}, there exists a c.g. under $\nu^*$ sub-$\sigma$-algebra $\mathcal{G}^*$ of $\mathcal{X}\cap Z^c$ on $Z^c$ such that, normalized, $R^*$ is an r.c.d. for $\nu^*$ given $\mathcal{G}^*$,
\[\frac{R_x^*(\cdot)}{R_x^*(Z^c)}=\nu^*(\cdot\mid \mathcal{G}^*)(x)\qquad\mbox{for }\nu^*\mbox{-a.e. }x.\]

Let us define
\[\mathcal{G}:=\{A\cup\emptyset, A\cup Z: A\in \mathcal{G}^*\}.\]
Then $\mathcal{G}$ is a sub-$\sigma$-algebra of $\mathcal{X}$ on $\mathbb{X}$. Moreover, for all $B\in\mathcal{X}$, $x\mapsto\frac{R_x(B\cap Z^c)}{R_x(Z^c)}\mathbbm{1}_{Z^c}(x)$ is $\mathcal{G}$-measurable.
Let $A\in\mathcal{G}$ and $B\in\mathcal{X}$. Then $A\cap Z^c\in\mathcal{G}^*$ and $Z^c\in\mathcal{G}$, so
\begingroup\allowdisplaybreaks
\begin{align*}
\int_{A} \mathbbm{1}_{Z^c}(x) \frac{R_x(B\cap Z^c)}{R_x(Z^c)} \nu(dx)&= \nu(Z^c) \int_{A\cap Z^c} \frac{R^*_x(B\cap Z^c)}{R_x^*(Z^c)}\nu^*(dx)\\
&=\nu(Z^c)\nu^*(A\cap Z^c\cap B)=\nu(A\cap Z^c \cap B)=\int_A\mathbbm{1}_{Z^c}(x)\nu(B|\mathcal{G})(x)\nu(dx).    
\end{align*}
\endgroup
Since $A$ is arbitrary and $\mathcal{X}$ is c.g., we obtain, as measures on $\mathbb{X}$,
\begin{equation}\label{representation:eq2}
\frac{R_x(\cdot\cap Z^c)}{R_x(Z^c)}=\nu(\cdot\mid\mathcal{G})(x)\qquad\mbox{for }\nu\mbox{-a.e. }x\in Z^c.
\end{equation}
Together, \eqref{representation:eq1}-\eqref{representation:eq2} and the fact that $R_x(Z^c)=m\,\nu(Z^c)$ for $\nu$-a.e. $x\in Z^c$ imply that
\[R_x(\cdot) = R_x(\cdot \cap Z^c) + R_x( \cdot \cap Z) = m\,\nu(Z^c)\nu(\cdot\mid\mathcal{G})(x) + m\,\nu(Z)\nu(\cdot\mid Z),\;\mbox{for }\nu\mbox{-a.e. }x\in Z^c.\]
Finally, recall that $\mathcal{G}^*$ is c.g. under $\nu^*$, that is, there exists $C^*\in\mathcal{G}^*$ such that $\nu^*(C^*)=1$ and $\mathcal{G}^*\cap C^*=\sigma(D_1\cap C^*,D_2\cap C^*,\cdots)$, for some $D_1,D_2,\ldots\in\mathcal{G}^*$. Define $C:=C^*\cup Z$. Then $C\in\mathcal{G}$ and $\nu(C)=1$, as $C^*\subseteq Z^c$. Moreover,
\[\mathcal{G}\cap C=\sigma(Z,D_1\cap C^*,D_2\cap C^*,\ldots).\]

Regarding the converse statement, suppose that $R_x(\mathbb{X})=1$ for all $x\in Z^c$, without loss of generality. It follows from Theorem \ref{result:prelim:predictive_cond} and the discussion in Section \ref{section:model:mvps} that the MVPS with reinforcement kernel \eqref{R_x} will be exchangeable if and only if $\mathbb{P}(X_{n+1}\in A, X_{n+2}\in B| X_1, \ldots, X_n)$ is symmetric w.r.t. $A$ and $B$, for each $n=0, 1, \ldots$ and every $A, B \in \mathcal{X}$. In the case of $n=0$, it holds
\begingroup\allowdisplaybreaks
\begin{align*}
\mathbb{P}(X_1\in A,X_2\in B)&=\int_{A}\frac{\theta\nu(B)+R_x(B)}{\theta+R_x(\mathbb{X})}\nu(dx)\\
&=\int_{A\cap Z}\frac{\theta\nu(B)}{\theta}\nu(dx)+\int_{A\cap Z^c}\frac{\theta\nu(B)+\nu(Z^c)\nu(B|\mathcal{G})(x)+\nu(B\cap Z)}{\theta+1}\nu(dx)\\
&\begin{aligned}\;=&\frac{1}{\theta+1}\biggl((\theta+1)\nu(A\cap Z)\nu(B)+\theta\nu(A\cap Z^c)\nu(B)\\
&\hspace{2cm}+\nu(Z^c)\int_{A\cap Z^c}\nu(B|\mathcal{G})(x)\nu(dx)+\nu(B\cap Z)\nu(A\cap Z^c)\biggr)\end{aligned}\\
&\begin{aligned}\,\overset{(a)}{=}&\frac{1}{\theta+1}\biggl(\theta\nu(A)\nu(B)+\nu(B\cap Z^c)\nu(A\cap Z)+\nu(B\cap Z)\nu(A\cap Z)\\
&\hspace{2cm}+\nu(Z^c)\int_{Z^c}\nu(A|\mathcal{G})(x)\nu(B|\mathcal{G})(x)\nu(dx)+\nu(B\cap Z)\nu(A\cap Z^c)\biggr),\end{aligned}
\end{align*}
\endgroup
where we have used in $(a)$ that
\[\int_{A\cap Z^c}\nu(B|\mathcal{G})(x)\nu(dx)=\mathbb{E}_{\nu}[\nu(A\cap Z^c|\mathcal{G})\nu(B|\mathcal{G})]=\mathbb{E}_{\nu}[\mathbbm{1}_{Z^c}\cdot\nu(A|\mathcal{G})\nu(B|\mathcal{G})],\]
which follows from standard results on conditional expectations and that $Z^c\in\mathcal{G}$.

In the case of $n\geq 1$, the same considerations yield, for each $i=1,\ldots,n$,
\begingroup\allowdisplaybreaks
\begin{align*}
\int_{A\cap Z^c}\nu(B|\mathcal{G})(x) R_{X_i}(dx) &=\nu(Z^c)\int_{A\cap Z^c}\nu(B|\mathcal{G})(x) \nu(dx|\mathcal{G})(X_i)\cdot\mathbbm{1}_{Z^c}(X_i)\\
&=\nu(Z^c)\nu(A|\mathcal{G})(X_i)\nu(B|\mathcal{G})(X_i)\cdot\mathbbm{1}_{Z^c}(X_i)\qquad\mbox{a.s.},
\end{align*}
\endgroup
so arguing in a similar but lengthy way as before, we can show that $\mathbb{P}(X_{n+1}\in A, X_{n+2}\in B| X_1, \ldots, X_n)=\mathbb{P}(X_{n+1}\in B, X_{n+2}\in A| X_1, \ldots, X_n)$.
\end{proof}

\begin{proof}[Proof of Corollary \ref{atoms_null_case}]
The proof is identical to that of Proposition \ref{atoms_general}.\\

Note, however, that the particular sub-$\sigma$-algebra $\mathcal{G}=\{A\cup\emptyset, A\cup Z:A\in\mathcal{G}^*\}$, constructed in the proof of Theorem \ref{representation}, has atoms of the form
\[[x]_\mathcal{G}=\begin{cases} [x]_{\mathcal{G}^*} &\mbox{for }x\in Z^c,\\ Z & \mbox{for }x\in Z.
\end{cases}\]
In that case, $\pi(B\cap Z)=\{Z\}$ when $B\cap Z\neq\emptyset$, and $\pi(B\cap Z)=\{\emptyset\}$ when $B\cap Z=\emptyset$, for every $B\in\mathcal{G}$. As a result, the representation \eqref{atoms_null_case:eq} w.r.t. that particular $\mathcal{G}$ becomes
\[(R_\pi)_p(\cdot)=\begin{cases}\nu(Z^c)\delta_p(\cdot)+\nu(Z)\delta_Z(\cdot)&\mbox{for }p\neq Z,\\ 0&\mbox{for }p=Z.\end{cases}\]
\end{proof}

\begin{proof}[Proof of Theorem \ref{directing_rm}]
It follows from Theorem \ref{representation} that $R$ satisfies \eqref{R_x} for some sub-$\sigma$-algebra $\mathcal{G}\subseteq\mathcal{X}$ such that $Z^c\in\mathcal{G}$. Moreover, recall from the proof of Theorem \ref{representation} that $\sum_{n=1}^\infty\mathbbm{1}_{Z^c}(X_n)=\infty$ a.s. and $T_n=\inf\{l>T_{n-1}:X_l\in Z^c\}$ is an a.s. finite random variable, for all $n\in\mathbb{N}$, where $T_0=0$. Regarding \textit{(i)}, let $B\in\mathcal{X}$. It follows from \eqref{eq:model:prelim:predict_conv} and \eqref{R_x} that, on a set of probability one,
\begingroup\allowdisplaybreaks
\begin{align*}
\tilde{P}(B\cap Z)&=\lim_{n\rightarrow\infty}\mathbb{P}(X_{n+1}\in B\cap Z|X_1,\ldots,X_n)\\
&=\lim_{n\rightarrow\infty}\frac{\theta\nu(B\cap Z)+\sum_{i=1}^n\nu(B\cap Z)\cdot\mathbbm{1}_{Z^c}(X_i)}{\theta+\sum_{i=1}^n\mathbbm{1}_{Z^c}(X_i)}=\nu(B\cap Z);
\end{align*}
\endgroup
thus, in particular, $\tilde{P}(Z)=\nu(Z)>0$ a.s. and $\tilde{P}(Z^c)=\nu(Z^c)>0$ a.s. Since $\mathcal{X}$ is c.g., we obtain, as measures on $\mathbb{X}$,
\[\tilde{P}(\cdot)=\tilde{P}(\cdot\cap Z^c)+\tilde{P}(\cdot\cap Z)=\nu(Z^c)\tilde{P}(\cdot\mid Z^c)+\nu(Z)\nu(\cdot\mid Z)\qquad\mbox{a.s.}\]
Furthermore, letting $M_n:=\sum_{i=1}^n\mathbbm{1}_{Z^c}(X_i)$, for $n\in\mathbb{N}$, we get
\begingroup\allowdisplaybreaks
\begin{align}
\tilde{P}(B|Z^c)&=\frac{1}{\nu(Z^c)}\lim_{n\rightarrow\infty}\mathbb{P}(X_{n+1}\in B\cap Z^c|X_1,\ldots,X_n)\nonumber\\
&=\lim_{n\rightarrow\infty}\frac{\theta\nu(B\cap Z^c)+\sum_{i=1}^n\bigl(\nu(Z^c)\nu(B\cap Z^c|\mathcal{G})(X_i)+\nu(Z)\nu(B\cap Z^c|Z)\bigr)\cdot\mathbbm{1}_{Z^c}(X_i)}{\theta\nu(Z^c)+\nu(Z^c)\sum_{i=1}^n\mathbbm{1}_{Z^c}(X_i)}\nonumber\\
&=\lim_{n\rightarrow\infty}\frac{\theta\nu(B\cap Z^c)+\sum_{i=1}^n\nu(Z^c)\nu(B\cap Z^c|\mathcal{G})(X_i)\cdot\mathbbm{1}_{Z^c}(X_i)}{\theta\nu(Z^c)+\nu(Z^c)\sum_{i=1}^n\mathbbm{1}_{Z^c}(X_i)}\nonumber\\
&=\lim_{n\rightarrow\infty}\frac{\theta^*\nu^*(B\cap Z^c)+\sum_{j=1}^{M_n}\nu(Z^c)\nu(B\cap Z^c|\mathcal{G})(X_{T_j})}{\theta^*+\nu(Z^c)M_n}\qquad\mbox{a.s.}, \label{directing_rm:proof:eq}
\end{align}
\endgroup
where $\theta^*:=\theta\nu(Z^c)$ and $\nu^*(B):=\nu(B|Z^c)$. It was already shown in the proof of Theorem \ref{representation} that $(X_{T_n})_{n\geq1}$ is an exchangeable MVPS$(\theta^*,\nu^*,R^*)$, where $R_x^*(\cdot)=R_x(\cdot)=\nu(Z^c)\nu(\cdot\mid\mathcal{G})(x)$ on $(Z^c,\mathcal{X}\cap Z^c)$, for $\nu$-a.e. $x\in Z^c$. Therefore, by Theorem \ref{introTh_2}, the directing random measure $\tilde{P}^*$ of $(X_{T_n})_{n\geq1}$ satisfies
\begin{equation}\label{directing_rm:proof:eq2}
\sup_{A\in\mathcal{X}}\,\bigl|\mathbb{P}(X_{T_{n+1}}\in A\cap Z^c|X_{T_1},\ldots,X_{T_n})-\tilde{P}^*(A\cap Z^c)\bigr|\overset{a.s.}{\longrightarrow}0,
\end{equation}
as $n\rightarrow\infty$, and is equal in law to
\[\tilde{P}^*(\cdot)\overset{w}{=}\sum_{j\geq1}V_j\frac{R^*_{U_j}(\cdot)}{\nu(Z^c)},\]
with $(V_j)_{j\geq1}$ and $(U_j)_{j\geq1}$ as in \eqref{Sethuraman} w.r.t. the parameters $\bigl(\frac{\theta^*}{\nu(Z^c)},\nu(\cdot\mid Z^c)\bigr)$. On the other hand, we have $M_n\overset{a.s.}{\longrightarrow}\infty$, as $n\rightarrow\infty$, so from \eqref{directing_rm:proof:eq},
\[\tilde{P}(B|Z^c)\overset{a.s.}{=}\tilde{P}^*(B\cap Z^c).\]
Using that $\frac{R_{U_j}^*(\cdot)}{\nu(Z^c)}=\frac{R_{U_j}(\cdot)}{R_{U_j}(Z^c)}$ a.s. on $(Z^c,\mathcal{X}\cap Z^c)$, we obtain
\[\tilde{P}(\cdot\mid Z^c)\overset{w}{=}\sum_{j\geq1}V_jR_{U_j}(\cdot\mid Z^c).\]
Finally, it follows from the calculations around \eqref{directing_rm:proof:eq} that, for every $A\in\mathcal{X}$,
\begingroup\allowdisplaybreaks
\begin{align*}
\mathbb{P}(X_{n+1}\in A|X_1,\ldots,X_n)-\tilde{P}(A)&=\mathbb{P}(X_{n+1}\in A\cap Z^c|X_1,\ldots,X_n)-\tilde{P}(A\cap Z^c)\\
&=\nu(Z^c)\bigl(\mathbb{P}(X_{T_{M_{n+1}}}\in A\cap Z^c|X_{T_1},\ldots,X_{T_{M_n}})-\tilde{P}^*(A\cap Z^c)\bigr)\qquad\mbox{a.s.},
\end{align*}
\endgroup
so the convergence of the predictive distributions of $(X_n)_{n\geq1}$ to $\tilde{P}$ in total variation follows from \eqref{directing_rm:proof:eq2}.

The proof of \textit{(ii)} is identical to Theorem \ref{atoms_general:mixture}, using the results found in \textit{(i)}.
\end{proof}

\subsection{Proofs of the results in Section \ref{section:results:beyond}}

\begin{proof}[Proof of Proposition \ref{results:other:cid}]
Suppose that $(X_n)_{n\geq1}$ is c.i.d. Let $A,B\in\mathcal{X}$. Then
\begin{equation}\label{eq:cid:proof:eq1}
\nu(A)=\mathbb{P}(X_1\in A)=\mathbb{P}(X_2\in A)=\int_\mathbb{X}\frac{\theta\nu(A)+R_x(A)}{\theta+1}\nu(dx),
\end{equation}
which after some simple algebra becomes
\begin{equation}\label{eq:cid:equality1-proof}
\int_\mathbb{X}R_x(A)\nu(dx)=\nu(A).
\end{equation}
On the other hand, we have
\begingroup\allowdisplaybreaks
\begin{align*}
\mathbb{P}(X_1\in A,X_2\in B)&=\int_A\frac{\theta\nu(B)+R_x(B)}{\theta+1}\nu(dx)=\frac{\theta\nu(A)\nu(B)+\int_AR_x(B)\nu(dx)}{\theta+1},
\end{align*}
\endgroup
and
\begingroup\allowdisplaybreaks
\begin{align*}
\mathbb{P}(X_1\in A,X_3\in B)&=\int_A\int_\mathbb{X}\frac{\theta\nu(B)+R_x(B)+R_y(B)}{\theta+2}\frac{\theta\nu(dy)+R_x(dy)}{\theta+1}\nu(dx)\\
&\begin{aligned}\;=\frac{1}{(\theta+2)(\theta+1)}&\biggl\{\theta^2\nu(A)\nu(B)+\theta\int_AR_x(B)\nu(dx)+\theta\nu(A)\int_\mathbb{X}R_y(B)\nu(dy)\\
&+\theta\nu(A)\nu(B)+\int_AR_x(B)\nu(dx)+\int_A\Bigl(\int_\mathbb{X}R_y(B)R_x(dy)\Bigr)\nu(dx)\biggr\}.\end{aligned}
\end{align*}
\endgroup
Since $(X_1,X_2)\overset{d}{=}(X_1,X_3)$, using \eqref{eq:cid:equality1-proof}, we obtain
\begingroup\allowdisplaybreaks
\begin{align*}
(\theta+2)&\biggl(\theta\nu(A)\nu(B)+\int_AR_x(B)\nu(dx)\biggr)\\
&=\theta^2\nu(A)\nu(B)+\theta\int_AR_x(B)\nu(dx)+2\theta\nu(A)\nu(B)+\int_AR_x(B)\nu(dx)+\int_A\Bigl(\int_\mathbb{X}R_y(B)R_x(dy)\Bigr)\nu(dx),
\end{align*}
\endgroup
or, after simplification, $\int_AR_x(B)\nu(dx)=\int_A(\int_\mathbb{X}R_y(B)R_x(dy))\nu(dx)$,
which implies that
\[R_x(B)=\int_\mathbb{X}R_y(B)R_x(dy),\qquad\mbox{for }\nu\mbox{-a.e. }x.\]\\

Conversely, suppose that $R$ satisfies \eqref{condition:stationarity} and \eqref{condition:reversible}. Repeating the argument in \eqref{eq:cid:proof:eq1} in reverse order, we get $X_1\overset{d}{=}X_2$. Moreover, for every $A\in\mathcal{X}$,
\begingroup\allowdisplaybreaks
\begin{align*}
\mathbb{P}(X_3\in A)&=\int_{\mathbb{X}^2}\frac{\theta\nu(A)+R_{x_1}(A)+R_{x_2}(A)}{\theta+2}\mathbb{P}(X_1\in dx_1,X_2\in dx_2)\\
&=\frac{1}{\theta+2}\biggl(\theta\nu(A)+\int_\mathbb{X}R_{x_1}(A)\nu(dx_1)+\int_\mathbb{X}R_{x_2}(A)\nu(dx_2)\biggr)=\nu(A);
\end{align*}
\endgroup
therefore, by induction, $(X_n)_{n\geq1}$ are identically distributed with marginal distribution $\nu$.

Fix $n\in\mathbb{N}$ and $A\in\mathcal{X}$. Let $C\in\mathcal{X}$ with $\nu(C)=1$ be the essential set in \eqref{condition:reversible}. Since $(X_n)_{n\geq1}$ are identically distributed with marginal distribution $\nu$, we have $\mathbb{P}(X_1\in C,\ldots,X_n\in C)=1$, see the proof of Proposition \ref{atoms_general}. It follows from \eqref{condition:stationarity} and \eqref{condition:reversible} that, for $\mathbb{P}_{(X_1,\ldots,X_n)}$-a.e. $(x_1,\ldots,x_n)\in\mathbb{X}^n$,
\begingroup\allowdisplaybreaks
\begin{align*}
\mathbb{P}(X_{n+2}\in A|X_1=x_1,&\ldots,X_n=x_n)=\int_\mathbb{X}\frac{\theta\nu(A)+\sum_{i=1}^{n+1}R_{x_i}(A)}{\theta+n+1}\mathbb{P}(X_{n+1}\in dx_{n+1}|X_1=x_1,\ldots,X_n=x_n)\\
&=\frac{1}{\theta+n+1}\bigg(\theta\nu(A)+\sum_{i=1}^nR_{x_i}(A)+\int_\mathbb{X}R_{x_{n+1}}(A)\frac{\theta\nu(dx_{n+1})+\sum_{i=1}^nR_{x_i}(dx_{n+1})}{\theta+n}\biggr)\\
&=\frac{1}{\theta+n+1}\bigg(\theta\nu(A)+\sum_{i=1}^nR_{x_i}(A)+\frac{\theta\nu(A)+\sum_{i=1}^nR_{x_i}(A)}{\theta+n}\biggr)\\
&=\frac{\theta\nu(A)+\sum_{i=1}^nR_{x_i}(A)}{\theta+n}\\
&=\mathbb{P}(X_{n+1}\in A|X_1,\ldots,X_n),
\end{align*}
\endgroup
which concludes the proof.
\end{proof}

\begin{proof}[Proof of Theorem \ref{results:other:cid-balanced}]
Suppose that $(X_n)_{n\geq1}$ is a c.i.d. MVPS. Let $\mathbb{X}=\{1,\ldots,k\}$. Assume that $(X_n)_{n\geq1}$ is not i.i.d.; otherwise, the result follows from the proof of Proposition 3.1 in \cite{sariev2024}. Define $f(x):=R_x(\mathbb{X})$, for $x\in\mathbb{X}$. By Lemma 2.1 and Theorem 2.2 in \cite{berti2004limit}, there exists a random probability measure $\tilde{P}$ on $\mathbb{X}$ such that
\[\lim_{n\rightarrow\infty}\mathbb{E}[g(X_{n+1})|X_1,\ldots,X_n]=\tilde{P}(g)\quad\mbox{and}\quad\lim_{n\rightarrow\infty}\frac{1}{n}\sum_{i=1}^ng(X_i)=\tilde{P}(g),\quad\mbox{a.s.},\]
for all functions $g:\mathbb{X}\rightarrow\mathbb{R}$, where we use the notation $\mu(g)=\int_\mathbb{X}g(x)\mu(dx)$ for any measure $\mu$ on $\mathbb{X}$. In particular, we have from the fact that $(X_n)_{n\geq1}$ are identically distributed with marginal distribution $\nu$,
\begin{equation}\label{proof:results:other:cid-balanced:eq0}
\mathbb{E}[\tilde{P}(g)]=\mathbb{E}\bigl[\lim_{n\rightarrow\infty}\mathbb{E}[g(X_{n+1})|X_1,\ldots,X_n]\bigr]=\lim_{n\rightarrow\infty}\mathbb{E}[g(X_1)]=\nu(g).
\end{equation}
Moreover,
\begingroup\allowdisplaybreaks
\begin{align}\label{proof:results:other:cid-balanced:eq1}
\begin{aligned}
\tilde{P}(R(g))&=\lim_{n\rightarrow\infty}\frac{1}{n}\sum_{i=1}^nR_{X_i}(g)\\
&=\lim_{n\rightarrow\infty}\frac{\theta\nu(g)+\sum_{i=1}^nR_{X_i}(g)}{\theta+\sum_{i=1}^nf(X_i)}\frac{\sum_{i=1}^nf(X_i)}{n}\\
&=\lim_{n\rightarrow\infty}\mathbb{E}[g(X_{n+1})|X_1,\ldots,X_n]\cdot\lim_{n\rightarrow\infty}\frac{1}{n}\sum_{i=1}^nf(X_i)=\tilde{P}(g)\tilde{P}(f)\quad\mbox{a.s.}
\end{aligned}
\end{align}
\endgroup
If $\tilde{P}\sim Q$, then, by continuity,
\begin{equation}\label{proof:results:other:cid-balanced:eq2}
p(R(g))=p(g)p(f),\qquad\mbox{for all }p\in\mbox{supp}(Q)\mbox{ and }g:\mathbb{X}\rightarrow\mathbb{R}.
\end{equation}
Given these preliminary results, we will prove the necessity of the representation of $R$ in Theorem \ref{results:other:cid-balanced} in several steps, first examining the support of $Q$, then showing that $R$ has a specific ``block-diagonal'' form, and finally proving that the distribution of $f$ is constant across blocks.\\

\noindent\textit{Step 1 (support of $Q$).} Define $P_n(\cdot):=\mathbb{P}(X_{n+1}\in\cdot\mid X_1,\ldots,X_n)$ and $\bar{R}_x:=R_x/f(x)$, for $x\in\mathbb{X}$. Then the convex hull $\mbox{conv}\{\bar{R}_x:x\in\mathbb{X}\}=\{\sum_{i=1}^k\lambda_i\bar{R}_{i}:\lambda_i\geq0,\sum_{i=1}^k\lambda_i=1\}$ is closed. On the other hand, as $n\rightarrow\infty$,
\begingroup\allowdisplaybreaks
\begin{align*}
P_n(\{j\})&=\frac{\theta}{\theta+\sum_{l=1}^nf(X_l)}\nu(\{j\})+\sum_{i=1}^n\frac{f(X_i)}{\theta+\sum_{l=1}^nf(X_l)}\bar{R}_{X_i}(\{j\})\\
&=\frac{\theta}{\theta+\sum_{j=1}^nf(X_j)}\nu(\{j\})+\sum_{x\in\mathbb{X}}\frac{\sum_{i=1}^nf(x)\cdot\mathbbm{1}_{\{X_i=x\}}}{\theta+\sum_{l=1}^nf(X_l)}\bar{R}_x(\{j\})\overset{a.s.}{\longrightarrow}\sum_{x\in\mathbb{X}}\frac{f(x)\tilde{P}(\{x\})}{\tilde{P}(f)}\bar{R}_x(\{j\}),
\end{align*}
\endgroup
which implies that
\[\mbox{supp}(Q)\subseteq\mbox{conv}\{\bar{R}_x:x\in\mathbb{X}\}.\]

Take $\epsilon>0$ and $t=(t_1,\ldots,t_k)\subseteq\mbox{conv}\{\bar{R}_x:x\in\mathbb{X}\}$. Let $d$ metrize the weak topology on the space of probability measures. Since $\mathbb{X}$ is finite, $d$ coincides with the total variation norm. Define $g_j(\textbf{n}):=\sum_{i=1}^{k}n_iR_{i}(\{j\})/\sum_{i=1}^kn_if(i)$, for $j=1,\ldots,k$ and $\textbf{n}=(n_1,\ldots,n_k)\in\mathbb{N}^k$. Then $\sum_{i=1}^{k}n_iR_{i}/\sum_{i=1}^kn_if(i)\in\mbox{conv}\{\bar{R}_x:x\in\mathbb{X}\}$, so there exist $\textbf{n}_\epsilon=(n_{1,\epsilon},\ldots,n_{k,\epsilon})\in\mathbb{N}^k$ such that
\[\max_{1\leq j\leq k}|g_j(\textbf{n}_\epsilon)-t_j|<\frac{\epsilon}{k}.\]
Moreover, $\mathbb{P}(X_{|\textbf{n}|+1}=j|X_1=x_1,\ldots,X_{|\textbf{n}|}=x_{|\textbf{n}|})-g_j(\textbf{n})=O\bigl(\theta/\sum_{i=1}^{|\textbf{n}|}f(X_i)\bigr)$, where $|\textbf{n}|:=\sum_{i=1}^kn_i$ with $n_i=\#\{l:x_l=i\}$, for $i=1,\ldots,k$. Then, letting $C_{|\textbf{n}_\epsilon|}:=\{d(P_{|\textbf{n}|_\epsilon}-t)<\epsilon\}$ and taking a multiple of $|\textbf{n}|_\epsilon$ if necessary, which would leave the $g_j(\textbf{n}_\epsilon)$'s unchanged, we have $\mathbb{P}(C_{|\textbf{n}_\epsilon|})>0$.

Now, since $(X_n)_{n\geq1}$ is c.i.d., the sequence $(P_{n+m}(\{j\}))_{m\geq0}$ is a martingale w.r.t. $(\sigma(X_1,\ldots,X_{n+m}))_{m\geq0}$, for every $j=1,\ldots,k$ and $n\in\mathbb{N}$. Moreover, 
\begingroup\allowdisplaybreaks
\begin{align*}
\bigl|P_{n+m+1}(\{j\})-P_{n+m}(\{j\})\bigr|&=\bigl|P_{n+m}(\{j\})-\bar{R}_{X_{n+m+1}}(\{j\})\bigr|\frac{f(X_{n+m+1})}{\theta+\sum_{i=1}^{n+m+1}f(X_i)}\leq\frac{\bar{f}}{\theta+(n+m+1)\underline{f}},
\end{align*}
\endgroup
for all $m\in\mathbb{N}$, where $\bar{f}=\max_{1\leq j\leq k}f(j)$ and $\underline{f}=\min_{1\leq j\leq k}f(j)$. By the maximal Azuma-Hoeffding inequality, see, e.g., \cite[][Corollary 6.9 and Section 6(c)]{mcdiarmid1989} and \cite{roch2022},
\[\mathbb{P}\Bigl(\sup_{m\geq1}\bigl|P_{n+m}(\{j\})-P_n(\{j\})\bigr|>2\epsilon/k\bigr| X_1,\ldots,X_n\Bigr)\leq2\exp\biggl\{-\frac{2\epsilon^2}{k^2\sum_{m=n+1}^\infty\bigl(\frac{\bar{f}}{\theta+m\underline{f}}\bigr)^2}\biggr\},\]
which goes to $0$, as $n\rightarrow\infty$. Then
\begingroup\allowdisplaybreaks
\begin{align*}
\mathbb{P}\Bigl(C_{|\textbf{n}_\epsilon|};\sup_{m\geq1}d(P_{{|\textbf{n}_\epsilon|}+m}-&P_{|\textbf{n}_\epsilon|})>\epsilon\Bigr)=\int_{C_{|\textbf{n}_\epsilon|}}\mathbb{P}\Bigl(\sup_{m\geq1}d(P_{{|\textbf{n}_\epsilon|}+m}-P_{|\textbf{n}_\epsilon|})>\epsilon\bigr|X_1,\ldots,X_{|\textbf{n}_\epsilon|}\Bigr)(\omega)\mathbb{P}(d\omega)\\
&\leq\sum_{j=1}^k\int_{C_{|\textbf{n}_\epsilon|}}\mathbb{P}\Bigl(\sup_{m\geq1}\bigl|P_{{|\textbf{n}_\epsilon|}+m}(\{j\})-P_{|\textbf{n}_\epsilon|}(\{j\})\bigr|>2\epsilon/k\bigr| X_1,\ldots,X_{|\textbf{n}_\epsilon|}\Bigr)(\omega)\mathbb{P}(d\omega)\\
&\leq 2k\exp\biggl\{-\frac{2\epsilon^2}{k^2\sum_{m={|\textbf{n}_\epsilon|}+1}^\infty\bigl(\frac{\bar{f}}{\theta+m\underline{f}}\bigr)^2}\biggr\}\mathbb{P}(C_{|\textbf{n}_\epsilon|})\\
&<\mathbb{P}(C_{|\textbf{n}_\epsilon|}),
\end{align*}
\endgroup
where again we take a multiple of $|\textbf{n}|_\epsilon$ if necessary to guarantee the last inequality. Therefore,
\[\mathbb{P}\Bigl(C_{|\textbf{n}_\epsilon|};\sup_{m\geq1}d(P_{{|\textbf{n}_\epsilon|}+m}-P_{|\textbf{n}_\epsilon|})\leq\epsilon\Bigr)>0,\]
which implies that $t\in\mbox{supp}(Q)$. Thus, ultimately,
\begin{equation}\label{proof:results:other:cid-balanced:eq3}
\mbox{supp}(Q)=\mbox{conv}\{\bar{R}_x:x\in\mathbb{X}\}.
\end{equation}~ 

\noindent\textit{Step 2 (structure of $R$).}
It follows from \eqref{proof:results:other:cid-balanced:eq2} and \eqref{proof:results:other:cid-balanced:eq3} that
\begin{equation}\label{proof:results:other:cid-balanced:eq4}
\bar{R}_x(R(g))=\bar{R}_x(g)\bar{R}_x(f),\qquad\mbox{for all }x\in\mathbb{X}\mbox{ and }g:\mathbb{X}\rightarrow\mathbb{R}.
\end{equation}
Since $(X_n)_{n\geq1}$ is not i.i.d., then $\bar{R}_i\neq\bar{R}_j$ for at least one pair $i\neq j$, so $\mbox{dim}(\mbox{supp}(Q))\geq1$. Let $p_1,p_2\in\mbox{supp}(Q)$ be such that $p_1\neq p_2$. As $\mbox{supp}(Q)$ is convex, then $\frac{p_1+p_2}{2}\in\mbox{supp}(Q)$ and, applying \eqref{proof:results:other:cid-balanced:eq2} with $g=f$ to $p_1,p_2$ and $\frac{p_1+p_2}{2}$, we get
\begingroup\allowdisplaybreaks
\begin{align*}
(p_1(f))^2+(p_2(f))^2&=p_1(R(f))+p_2(R(f))=2\biggl(\Bigl(\frac{p_1+p_2}{2}\Bigr)(f)\biggr)^2=\frac{1}{2}\bigl((p_1(f))^2+2\,p_1(f)p_2(f)+(p_2(f))^2\bigr),
\end{align*}
\endgroup
which implies that $(p_1(f)-p_2(f))^2=0$. Therefore, $p(f)=c>0$ is constant, for all $p\in\mbox{supp}(Q)$. In particular, $\bar{R}_x(f)=c$, so from \eqref{proof:results:other:cid-balanced:eq4},
\begin{equation}\label{proof:results:other:cid-balanced:eq5}
R_x(R(g))=c\cdot R_x(g),\qquad\mbox{for all }x\in\mathbb{X}\mbox{ and }g:\mathbb{X}\rightarrow\mathbb{R}.
\end{equation}
On the other hand, $\tilde{P}(f)=c$ a.s., so from \eqref{proof:results:other:cid-balanced:eq0} and \eqref{proof:results:other:cid-balanced:eq1},
\begin{equation}\label{proof:results:other:cid-balanced:eq6}
\nu(R(g))=c\cdot\nu(g)\qquad\mbox{for all }g:\mathbb{X}\rightarrow\mathbb{R}.
\end{equation}

Let us consider $R$ as a $k\times k$ matrix, and $\nu$ as a $k$-dimensional vector. It follows from \eqref{proof:results:other:cid-balanced:eq5} that $R/c$ is a non-negative idempotent matrix whose rows are nonzero. Furthermore, \eqref{proof:results:other:cid-balanced:eq6} implies that no column of $R/c$ is zero. According to Theorem 2 in \cite{flor1969}, $R/c$ and, as a consequence $R$, becomes a block-diagonal matrix after a permutation of the coordinates, where each block is a positive rank-one idempotent matrix. Let us partition $\mathbb{X}$ according to the blocks $B_1,\ldots,B_m$ in $R$, for some $m\in\{2,\ldots,k\}$, where the case $m=1$ is excluded, since it leads to an i.i.d. sequence. It follows from the structure of $R$ that, for each $j\in\{1,\ldots,m\}$, there exists a positive probability measure $p^{B_j}$ on $B_j$ such that
\begin{equation}\label{proof:results:other:cid-balanced:eq7}
R_x(\cdot)=f(x)p^{B_j}(\cdot)\qquad\mbox{for all }x\in B_j.
\end{equation}

Fix $j\in\{1,\ldots,m\}$. Let $A\subseteq B_j$. It follows from \eqref{proof:results:other:cid-balanced:eq6} and \eqref{proof:results:other:cid-balanced:eq7} that
\[c\cdot\nu(A)=\nu(R(A))=\nu(f\cdot\mathbbm{1}_{B_j})p^{B_j}(A).\]
In particular, $c\cdot\nu(B_j)=\nu(f\cdot\mathbbm{1}_{B_j})$, so combining both expressions gives
\[p^{B_j}(A)=\frac{c}{\nu(f\cdot\mathbbm{1}_{B_j})}\nu(A)=\nu(A|B_j).\]
Therefore,
\begin{equation}\label{proof:results:other:cid-balanced:eq9}
R_x(\cdot)=f(x)\cdot\nu(\cdot\mid B_j),\qquad\mbox{for all }x\in B_j\mbox{ and }j=1,\ldots,m.
\end{equation}~

\noindent\textit{Step 3 (distribution of $f$).} Let $j\in\{1,\ldots,m\}$ and $n\in\mathbb{N}_0$. Since $(X_1,\ldots,X_n,X_{n+2})\overset{d}{=}(X_1,\ldots,X_n,X_{n+1})$, we obtain from \eqref{proof:results:other:cid-balanced:eq9} that, on $\{X_1\in B_j,\ldots,X_n\in B_j\}$ a.s.,
\begingroup\allowdisplaybreaks
\begin{align*}
\frac{\theta\nu(B_j)+\sum_{i=1}^nf(X_i)}{\theta+\sum_{i=1}^nf(X_i)}&=\mathbb{P}(X_{n+1}\in B_j|X_1,\ldots,X_n)\\
&=\mathbb{E}\bigl[\mathbb{P}(X_{n+2}\in B_j|X_1,\ldots,X_{n+1})|X_1,\ldots,X_n\bigr]\\
&=\int_\mathbb{X}\frac{\theta\nu(B_j)+\sum_{i=1}^nf(X_i)+f(x)\cdot\mathbbm{1}_{B_j}(x)}{\theta+\sum_{i=1}^nf(X_i)+f(x)}\frac{\theta\nu(dx)+\sum_{i=1}^nf(X_i)\nu(dx|B_j)}{\theta+\sum_{i=1}^nf(X_i)},
\end{align*}
\endgroup
which upon cancellation of $1/(\theta+\sum_{i=1}^nf(X_i))$, some simple algebra, and setting $h:=1/(\theta+\sum_{i=1}^nf(X_i)+f(x))$ becomes
\begingroup\allowdisplaybreaks
\begin{align*}
\theta&\nu(B_j)+\sum_{i=1}^nf(X_i)\\
&\begin{aligned}\;=\Bigl(\theta\nu(B_j)+\sum_{i=1}^nf(X_i)\Bigr)\theta\nu(h)&+\theta\nu(h\cdot f\cdot\mathbbm{1}_{B_j})\\
&+\Bigl(\theta\nu(B_j)+\sum_{i=1}^nf(X_i)\Bigr)\sum_{i=1}^nf(X_i)\nu(h|B_j)+\sum_{i=1}^nf(X_i)\nu(h\cdot f\cdot\mathbbm{1}_{B_j}|B_j)\end{aligned}\\
&\begin{aligned}\;=\nu(B_j)\Bigl(\theta+\sum_{i=1}^n\frac{f(X_i)}{\nu(B_j)}\Bigr)\theta\nu(h)&+\theta\nu(h\cdot f\cdot\mathbbm{1}_{B_j})\\
&+\nu(B_j)\Bigl(\theta+\sum_{i=1}^n\frac{f(X_i)}{\nu(B_j)}\Bigr)\sum_{i=1}^nf(X_i)\nu(h|B_j)+\sum_{i=1}^n\frac{f(X_i)}{\nu(B_j)}\nu(h\cdot f\cdot\mathbbm{1}_{B_j}).\end{aligned}
\end{align*}
\endgroup
Upon further cancellation of $\theta+\sum_{i=1}^n\frac{f(X_i)}{\nu(B_j)}$, we get
\begingroup\allowdisplaybreaks
\begin{align*}
\nu(B_j)&=\nu(B_j)\theta\nu(h)+\theta\nu(h\cdot f\cdot\mathbbm{1}_{B_j})+\nu(B_j)\sum_{i=1}^nf(X_i)\nu(h|B_j)\\
&=\nu(B_j)\theta\nu(h)+\nu(B_j)\nu\Bigl(h\cdot\Bigl(\sum_{i=1}^nf(X_i)+f\Bigr)\mid B_j\Bigr),
\end{align*}
\endgroup
so
\[\nu(B_j)\theta\nu(h)=\nu(B_j)\nu\Bigl(1-h\cdot\Bigl(\sum_{i=1}^nf(X_i)+f\Bigr)\mid B_j\Bigr)=\nu(B_j)\nu\bigl((h\cdot\theta)\mid B_j\bigr),\]
or, equivalently, $\nu(h)=\nu(h|B_j)$. Multiplying both sides by $\theta+\sum_{i=1}^nf(X_i)$ and subtracting $1$ gives
\begin{equation}\label{proof:results:other:cid-balanced:eq10}
\nu\biggl(\frac{f}{\theta+\sum_{i=1}^nf(x_i)+f}\biggr)=\nu\biggl(\frac{f}{\theta+\sum_{i=1}^nf(x_i)+f}\mid B_j\biggr),
\end{equation}
for all $x_1,\ldots,x_n\in B_j$ and $n\in\mathbb{N}_0$.

Suppose that the distinct values of $f$ are $a_1,\ldots,a_L\in(0,\infty)$. Let us define $p_l:=\nu(f=a_l)$ and $p_{jl}:=\nu(f=a_l|B_j)$, for $l=1,\ldots,L$ and $j=1,\ldots,m$. Fix $j\in\{1,\ldots,m\}$. Define $C_j:=\{\theta+\sum_{i=1}^nf(x_i):x_1,\ldots,x_n\in B_j,n\in\mathbb{N}_0\}$, noting that $C_j$ is infinite. It follows from \eqref{proof:results:other:cid-balanced:eq10} that
\[\sum_{l=1}^L\frac{a_l}{c+a_l}(p_{jl}-p_l)=0,\qquad\mbox{for all }c\in C_j.\]
Multiplying both sides on all denominators, we get polynomials of the type
\begin{equation}\label{proof:results:other:cid-balanced:eq11}
P_j(c)=\sum_{l=1}^La_l(p_{jl}-p_l)\prod_{h\neq l}(c+a_h),\qquad\mbox{for }c\in C_j.
\end{equation}
Since $P_j(c)$ is a polynomial of degree $L-1$ and $P_j(c)=0$ for infinitely many $c$, then $P_j(c)\equiv 0$ for all $c\in\mathbb{R}$. In particular,
\[0=P_j(-a_i)=a_i(p_{ji}-p_i)\prod_{h\neq i}(-a_i+a_h),\qquad\mbox{for }i=1,\ldots,L.\]
But $a_h\neq a_i$, for $h\neq i$, and $a_i>0$. Therefore, $\nu(f=a_i|B_j)=p_{ji}=p_i=\nu(f=a_i)$, for all $i=1,\ldots,L$ and $j=1,\ldots,m$.\\

Regarding the converse result, suppose that $(X_n)_{n\geq1}$ is an MVPS$(\theta,\nu,R)$ such that $R$ satisfies \textit{1.} and \textit{2.} from the statement of Theorem \ref{results:other:cid-balanced} w.r.t. some partition $B_1,\ldots,B_m$ of $\mathbb{X}$. Define $T_{n,j}:=\sum_{i=1}^nf(X_i)\cdot\mathbbm{1}_{B_j}(X_i)$ and $D_n:=\sum_{i=1}^nf(X_i)$, for $j=1,\ldots,m$ and $n\in\mathbb{N}$. Let $A\subseteq\mathbb{X}$. Since $R_x(A)=\sum_{j=1}^mf(x)\cdot\mathbbm{1}_{B_j}(x)\nu(A|B_j)$, then
\[P_n(A):=\mathbb{P}(X_{n+1}\in A|X_1,\ldots,X_n)=\sum_{j=1}^m\frac{\theta\nu(B_j)+T_{n,j}}{\theta+D_n}\nu(A|B_j).\]
In particular, $P_n(A\cap B_j)=\frac{\theta\nu(B_j)+T_{n,j}}{\theta+D_n}\nu(A|B_j)$, for any $j\in\{1,\ldots,m\}$, so
\[\mathbb{P}(X_{n+1}\in A\cap B_j|X_1,\ldots,X_n;X_{n+1}\in B_j)=\frac{P_n(A\cap B_j)}{P_n(B_j)}=\nu(A|B_j).\]
From this, we get
\begingroup\allowdisplaybreaks
\begin{align}\label{proof:results:other:cid-balanced:eq12}
\mathbb{P}(X_{n+2}\in A|X_1,\ldots,X_n)&=\sum_{j=1}^m\mathbb{P}(X_{n+2}\in A\cap B_j|X_1,\ldots,X_n)\nonumber\\
&=\sum_{j=1}^m\mathbb{E}\bigl[\mathbb{P}(X_{n+2}\in A\cap B_j|X_1,\ldots,X_{n+1};X_{n+2}\in B_j)P_{n+1}(B_j)\mid X_1,\ldots,X_n\bigr]\nonumber\\
&=\sum_{j=1}^m\mathbb{P}(X_{n+2}\in B_j|X_1,\ldots,X_n)\nu(A|B_j).
\end{align}
\endgroup

On the other hand, for all $j\in\{1,\ldots,m\}$ and $a\in(0,\infty)$,
\begingroup\allowdisplaybreaks
\begin{align*}
\mathbb{P}(X_{n+1}\in B_j,f(X_{n+1})=a|X_1,\ldots,X_n)&=\sum_{x\in B_j:f(x)=a}\frac{\theta\nu(B_j)+T_{n,j}}{\theta+D_n}\nu(\{x\}|B_j)\\
&=\frac{\theta\nu(B_j)+T_{n,j}}{\theta+D_n}\nu(f=a|B_j)\\
&=\frac{\theta\nu(B_j)+T_{n,j}}{\theta+D_n}\nu(f=a)\\
&=\mathbb{P}(X_{n+1}\in B_j|X_1,\ldots,X_n)\nu(f=a);
\end{align*}
\endgroup
thus, $f(X_{n+1})$ and $\mathbbm{1}_{B_j}(X_{n+1})$ are conditionally independent given $(X_1,\ldots,X_n)$. Moreover, summing over $j\in\{1,\ldots,m\}$, we have $f(X_{n+1})\mid X_1,\ldots,X_n\sim\nu\circ f^{-1}$. Then
\begingroup\allowdisplaybreaks
\begin{align*}
\mathbb{P}(X_{n+2}\in B_j|X_1,\ldots,X_n)&=\mathbb{E}\Bigl[\frac{\theta\nu(B_j)+T_{n,j}+f(X_{n+1})\cdot\mathbbm{1}_{B_j}(X_{n+1})}{\theta+D_n+f(X_{n+1})}\bigr|X_1,\ldots,X_n\Bigr]\\
&=\bigl(\theta\nu(B_j)+T_{n,j}\bigr)\int_\mathbb{X}\frac{1}{\theta+D_n+f(x)}\nu(dx)+P_n(B_j)\int_{\mathbb{X}}\frac{f(x)}{\theta+D_n+f(x)}\nu(dx)\\
&=\frac{\theta\nu(B_j)+T_{n,j}}{\theta+D_n}\biggl(\int_\mathbb{X}\frac{\theta+D_n}{\theta+D_n+f(x)}\nu(dx)+\int_{\mathbb{X}}\frac{f(x)}{\theta+D_n+f(x)}\nu(dx)\biggr)\\
&=\frac{\theta\nu(B_j)+T_{n,j}}{\theta+D_n}.
\end{align*}
\endgroup
Plugging this into \eqref{proof:results:other:cid-balanced:eq12}, we get
\[\mathbb{P}(X_{n+2}\in A|X_1,\ldots,X_n)=\sum_{j=1}^m\frac{\theta\nu(B_j)+T_{n,j}}{\theta+D_n}\nu(A|B_j)=\mathbb{P}(X_{n+1}\in A|X_1,\ldots,X_n),\]
which completes the proof of the theorem.
\end{proof}

\section*{Acknowledgments}

This study is financed by the European Union-NextGenerationEU, through the National Recovery and Resilience Plan of the Republic of Bulgaria, project No. BG-RRP-2.004-0008.

\bibliography{mybib} 

\end{document}